\newtheorem{theo}{Theorem}[section]
\newtheorem{pro}[theo]{Proposition}
\newtheorem{lem}[theo]{Lemma}
\newtheorem{cor}[theo]{Corollary}
\newcommand{\ra}{\rightarrow}
\providecommand{\MR}{\relax\ifhmode\unskip\space\fi MR }
\providecommand{\bysame}{\leavevmode\hbox
to3em{\hrulefill}\thinspace}
 \theoremstyle{definition}
\newtheorem{defin}[theo]{Definition}
\newtheorem{exa}[theo]{Example}
\newcommand{\Pp}{\mathbf P}
\newcommand{\Ee}{\mathbf E}
\newcommand{\Z}{\mathbf Z}
\def\<{\langle}
\def\>{\rangle}
\def\weight{\omega}
\def\word{w}
\theoremstyle{remark}
\newtheorem{rem}[theo]{Remark}
\def\eps{\varepsilon}
\def\red{\color{red} }
\title{Long range random walks and associated geometries on groups of polynomial growth}
\author{Zhen-Qing Chen, Takashi Kumagai, Laurent Saloff-Coste,\\ Jian Wang and Tianyi Zheng}
\date{}
\begin{document}

\maketitle

\begin{abstract}  In the context of countable groups of polynomial volume growth, we consider a large class of random walks that
 are allowed to take
long jumps along multiple subgroups according to  power law distributions. For such a random walk, we study the large time behavior
of its probability of return at time $n$ in terms of the key parameters describing the driving measure and the structure of the underlying group. We obtain assorted  estimates including near-diagonal two-sided estimates and the
H\"older continuity  of the solutions of the associated discrete parabolic difference equation. In each case, these estimates involve the construction of a geometry adapted to the walk.
\end{abstract}

\section{Introduction}
\setcounter{equation}{0}

\subsection{Random walks and word-length}
Given a probability measure $\mu$ on a discrete group $G$
with identity element $e$,
a random
walk driven by $\mu$ with initial measure $\nu_0$ is  a $G$-valued
stochastic process $(X_n)_0^\infty$ such that $X_0$ has law $\nu_0$
and  $X_{n+1}=X_n\xi_{n+1}$, where $(\xi_i)_1^\infty$ is a $G$-valued
i.i.d sequence with $\xi_i$ distributed according to $\mu$. This discrete Markov process has transition kernel
$$p(x,y)=\mathbf P( X_{n+1}=y|X_n=x)= \mu(x^{-1}y)$$  and satisfies
$$
\mathbf P(X_{n}=x)=\nu_0*\mu^{(n)} (x),
$$
where $u*v(x)=\sum u(y)v(y^{-1}x)$ and $\mu^{(n)}$ stands for the $n$-fold convolution of $\mu$ with itself.  Understanding the behavior of the function of the discrete time parameter $n$,
$$n\mapsto \mu^{(n)}(e),$$
which represents the return probability to the starting point after $n$ steps,
 is one of the key questions in the study of random walks.  When $\mu$ is symmetric (i.e., $\mu(g^{-1})=\mu(g)$ for all $g\in G$), it is an easy exercise to check that
$$n\mapsto \mu^{(2n)}(e)=\|\mu^{(2n)}\|_\infty=
\max_{x\in G} \mu^{(2n)}(x)
$$
is a non-increasing function of $n$.
The aim of this article is to study, in the context of finitely generated groups of polynomial volume growth, a natural class of random walks that allow for long range jumps. General random walks on countable groups were first considered in Harry
Kesten's
1958 Ph.D. dissertation published as \cite{Kesten}. For further background information, see \cite{KV,LSC-N,Woess}.

The most natural and best studied random walks on a finitely generated group $G$ are driven by  finitely supported  symmetric measures, and it is then natural to assume that the support of the measure generates the group $G$ (otherwise, we can restrict attention to the subgroup generated by the support).  In the study of these random walks,  the word-length distance and associated geometry are very useful. Given a finite symmetric generating set  $S$, the associated word-length of an element $g$ in $G$, $|g|=|g|_{G,S}$, is the least number of generators needed to express $g$ as a product over $S$ in $G$ (by convention, $|e|_S=0$).  The associated (left-invariant) distance
between two elements $x,y\in G$ is $$d(x,y)=d_{G,S}(x,y)=|x^{-1}y|
.$$
The volume growth function of the pair $(G,S)$ is the counting function $$V(r)=\#\{g: |g|\le r\}.$$

We will use the notation $f_1\asymp f_2$  between two real valued functions defined on an abstract domain $D$ (often omitted)  to indicate that
there are constants $c_1,c_2\in (0,\infty)$ such that
$$
\forall\, x\in D,\;\;  c_1f_1(x)\le f_2(x)\le c_2f(x).
$$
We will also use the notation
$ f_1\simeq f_2$ between two positive real functions defined on an appropriate domain $D\subset \mathbb R$
(typically,
$D=[1,\infty)$  or $D=(0,1]$ or also $D=\{0, 1,2,\dots\}$)
to indicate that there are constants
$c_i$, $1\le i\le 4$, such that
$$\forall\, x\in D,\;\; c_1f_1(c_2 x)\le f_2(x)\le c_3f_1(c_4x)$$
(in each case, $c_2x$ and $c_4x$ should be understood appropriately.
Specifically, when $D=[1, \infty)$, $D=(0, 1]$ and $D=\{0, 1,2,\dots\}$,
$c_2x$ and $c_4x$ should be understood
as $(c_2x)\vee 1$ and $(c_4x)\vee 1$, as
$(c_2x)\wedge 1$ and $(c_4x)\wedge 1$, and as
$\lfloor c_2x\rfloor$ and  $\lceil c_4x\rceil$, respectively. Here for $a, b\in \mathbb R$,
$a\vee b:=\max\{a, b\}$,
$a\wedge b:=\min \{a, b\}$, and $\lfloor a\rfloor$ denotes the largest integer not exceeding $a$).

Typically, we assume that at least one of these functions is monotone (otherwise, this notion is not very practical).
Similarly, we define the associated order relations $\preceq$ and $\succeq$ so that
 $f \preceq g$ means that $f(x) \leq c_1 g(c_2 x)$, and so on.
 For instance, when $|\cdot |_1$ and $|\cdot|_2$ are word-length functions associated to two finite symmetric generating sets $S_1,S_2$ of the same group $G$ then, for all $x\in G$,  $|x|_1\asymp |x|_2$.  If $V_1$, $V_2$ are the associated volume growth functions then  $V_1\simeq V_2$. In particular, up to the $\simeq $ equivalence relation,  the volume growth function of a
finitely generated group $G$
does not depend on the choice of the finite generating set $S$, see, e.g., \cite{Gri}.

\begin{defin} A finitely generated group has polynomial volume growth of degree $d$ if,
$V(r)\asymp r^d$ for $r\in [1,\infty)$.
\end{defin}

By a celebrated theorem of M. Gromov,
it suffices that $$\liminf_{r\ra \infty} r^{-A}V(r) <\infty$$
 with some constant $A$ for the group $G$ to have polynomial volume growth of degree $d$ for some integer $d=d(G)\in \{0,1,\dots\}$.  In this context, the tight relation between volume growth and random walk behavior is illustrated by the following result (See also  \cite{Hebisch1993,VSCC,Woess}).
\begin{theo}[N. Varopoulos, {\cite{Varnil}}]
\label{th1}
Let $G$ be a finitely generated group of polynomial volume growth of degree $d$ and let $\mu $ be a finitely supported symmetric probability measure on $G$ with generating support. Then, for all $n\in \{1,2,\dots\}$,
$$\mu^{(2n)}(e)\asymp \frac{1}{V(\sqrt{n})}  \asymp n^{-d/2}.$$\end{theo}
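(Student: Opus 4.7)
The strategy is to prove matching upper and lower bounds on $\mu^{(2n)}(e)$; since $V(\sqrt{n})\asymp n^{d/2}$ under the polynomial growth hypothesis, the two equivalences in the conclusion are automatic once $\mu^{(2n)}(e)\asymp n^{-d/2}$ is established.

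For the upper bound, the plan is to pass through a Nash-type functional inequality. The polynomial volume growth assumption $V(r)\asymp r^d$ implies, via the Coulhon--Saloff-Coste isoperimetric framework, an inequality of the form $|\partial_S A|\geq c|A|^{1-1/d}$ for every finite set $A\subset G$, where $\partial_S A$ denotes the edge boundary with respect to the generating set $S$. Because $\mu$ is finitely supported with generating support, the combinatorial Dirichlet form of $S$ is comparable to
$$\mathcal{E}_\mu(f,f)=\tfrac12\sum_{x,g}|f(xg)-f(x)|^2\mu(g),$$
so the isoperimetric inequality transfers to a Nash inequality
$$\|f\|_2^{2+4/d}\leq C\,\mathcal{E}_\mu(f,f)\,\|f\|_1^{4/d}.$$
Applying this with $f_n=\mu^{(n)}$ and iterating the resulting discrete differential inequality for $n\mapsto\|\mu^{(n)}\|_2^2$ yields $\|\mu^{(n)}\|_2^2\leq Cn^{-d/2}$; since $\mu$ is symmetric, $\mu^{(2n)}(e)=\|\mu^{(n)}\|_2^2$, giving the upper bound.

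For the lower bound, I would use a Cauchy--Schwarz argument localized on a ball of radius of order $\sqrt{n}$. For any $r>0$,
$$\mu^{(2n)}(e)=\|\mu^{(n)}\|_2^2\geq \frac{1}{V(r)}\Bigl(\sum_{|x|\leq r}\mu^{(n)}(x)\Bigr)^2.$$
Choosing $r=A\sqrt{n}$, it suffices to guarantee $\Pp(|X_n|\leq A\sqrt{n})\geq 1/2$ for $A$ sufficiently large. This displacement bound can be obtained by combining the Carne--Varopoulos Gaussian upper estimate with the already-established on-diagonal bound $\mu^{(n)}(x)\leq Cn^{-d/2}$ and the polynomial volume growth, which together make the tail sum $\sum_{|x|>A\sqrt{n}}\mu^{(n)}(x)$ smaller than $1/2$ once $A$ is large. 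Substituting then yields $\mu^{(2n)}(e)\geq c/V(\sqrt{n})$, matching the upper bound.

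The principal obstacle is the upper bound, specifically the effective passage from polynomial volume growth to an isoperimetric (equivalently Nash) inequality with the correct exponent $1-1/d$. Although Gromov's theorem identifies $G$ as virtually nilpotent, which would permit a rescaling argument on an associated simply connected nilpotent Lie group, the functional-analytic route through Coulhon--Saloff-Coste's isoperimetric profile is cleaner and yields the exponent $d/2$ without invoking the nilpotent classification. Once this analytical input is in place, both the on-diagonal upper bound and the Cauchy--Schwarz lower bound are comparatively routine.
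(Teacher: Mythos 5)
The paper cites this theorem to Varopoulos without reproducing a proof, so I am assessing your argument on its own terms.

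Your upper bound is correct and is the standard modern route: the two-sided volume estimate $V(r)\asymp r^d$ gives, via the Coulhon--Saloff-Coste isoperimetric theorem, the inequality $|\partial_S A|\geq c|A|^{1-1/d}$; this is equivalent (after comparison of the combinatorial Dirichlet form with $\mathcal E_\mu$, valid since $\mu$ is finitely supported with generating support) to a Nash inequality with exponent $d$; and iterating that inequality for $n\mapsto\|\mu^{(n)}\|_2^2$ gives $\mu^{(2n)}(e)=\|\mu^{(n)}\|_2^2\preceq n^{-d/2}$.

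The gap is in the displacement bound for the lower bound. The Carne--Varopoulos estimate, in the form applicable here (symmetric walk, uniform invariant measure, amenable group), reads $\mu^{(n)}(x)\le 2\exp(-|x|^2/(2n))$ \emph{without} any $n^{-d/2}$ prefactor. If you sum this over $|x|>A\sqrt n$ using the volume bound $V(r)\preceq r^d$, the annular decomposition produces a factor $n^{d/2}$ that is not compensated, so the tail sum is $\preceq n^{d/2}\sum_{k\ge A}(k+1)^d e^{-ck^2}$, which is not bounded as $n\to\infty$ for fixed $A$. Interpolating geometrically between Carne--Varopoulos and the on-diagonal bound $\mu^{(n)}(x)\preceq n^{-d/2}$ gives $\mu^{(n)}(x)\preceq n^{-\theta d/2}e^{-(1-\theta)|x|^2/(2n)}$ for $\theta\in(0,1)$, and the resulting tail sum is $\preceq n^{(1-\theta)d/2}\cdot\varepsilon_\theta(A)$, still unbounded in $n$. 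What your argument actually needs is the full Gaussian upper bound $\mu^{(n)}(x)\preceq n^{-d/2}\exp(-c|x|^2/n)$, and this is a strictly stronger statement than Carne--Varopoulos plus on-diagonal: it requires running Davies' perturbation method (or Grigor'yan's integrated maximum principle) off the Nash inequality, which is precisely the content of Hebisch--Saloff-Coste \cite{Hebisch1993}. Once you have that, the tail sum is $\preceq\sum_{k\ge A}(k+1)^d e^{-ck^2}\to 0$, closing the argument.

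There is also a simpler route to the lower bound that sidesteps the displacement estimate entirely, and it is the one this paper uses in more general settings (Section 4): bound the spectral profile from above directly by testing on $f_r(x)=(r-|x|)_+$. Then $\|f_r\|_2^2\asymp r^2V(r)$, and since $\mu$ has finite support $\mathcal E_\mu(f_r,f_r)\preceq V(r)$, so the Rayleigh quotient is $\preceq r^{-2}\asymp V(r)^{-2/d}$. This gives $\Lambda_{2,G,\mu}(v)\preceq v^{-2/d}$, which is equivalent (by the material in Section \ref{sec-LambdaPhi}) to $\mu^{(2n)}(e)\succeq n^{-d/2}$. This is shorter and uses only $V(r)\asymp r^d$, with none of the machinery of off-diagonal heat kernel bounds.
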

In fact, this result can be generalized in two significant directions by allowing $\mu$ to have finite second moment and by  estimating $\mu^{(2n)}(g)$ for a range of $g$ that depends on $n$.
\begin{theo} \label{th2}
Let $G$ be a finitely generated group of polynomial volume growth of degree $d$ and let $\mu $ be a symmetric probability measure on $G$
with generating support and with  finite second moment, that is,
$\sum_g |g|^2\mu(g)<\infty$.  For simplicity, assume that $\mu(e)>0$. Then, for any fixed $A>0$, we have
$$\forall\, g\in G, \;n\in \{1,2, \dots,\} \mbox{ with }  |g|\le  A \sqrt{n},\;\;
\mu^{(n)}(g)\asymp \frac{1}{V(\sqrt{n})} \asymp n^{-d/2}.
$$
\end{theo}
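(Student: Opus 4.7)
\textbf{Plan of proof for Theorem \ref{th2}.} The strategy is to reduce Theorem \ref{th2} to Theorem \ref{th1} by bracketing $\mu$ between measures that are finitely supported and generating, then handling the long-range part perturbatively using the second moment hypothesis. Since $\mu$ has generating support and $\mu(e)>0$, I can pick a finite symmetric generating set $S\subset \mathrm{supp}(\mu)$ and a number $\alpha\in(0,1)$ so that $\mu\ge\alpha\,\mu_0$ pointwise, where $\mu_0$ is the uniform measure on $S\cup\{e\}$. This yields the decomposition $\mu=\alpha\,\mu_0+(1-\alpha)\rho$ with $\rho$ a symmetric probability measure having finite second moment.

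For the \emph{upper bound}, I would first prove the on-diagonal estimate $\mu^{(2n)}(e)\le Cn^{-d/2}$ via Dirichlet form comparison. Writing each $g\in G$ as a word $s_1\cdots s_{|g|}$ in the alphabet $S$ and telescoping, one obtains
\[
\mathcal E_\mu(f,f)\le \Bigl(C_S\sum_{g\in G}|g|^2\mu(g)\Bigr)\,\mathcal E_{\mu_0}(f,f),
\]
so the Nash inequality corresponding to $\mu_0$ (which, by Theorem \ref{th1} and standard Varopoulos theory, is equivalent to the decay $\mu_0^{(2n)}(e)\asymp n^{-d/2}$) transfers to $\mu$ with different constants. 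The pointwise upper bound $\mu^{(n)}(g)\le Cn^{-d/2}$ then follows from $\mu^{(m+n)}(g)\le \|\mu^{(m)}\|_2\|\mu^{(n)}\|_2$ together with symmetry, using $\mu(e)>0$ to accommodate the parity of $n$.

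For the \emph{lower bound}, I would carry out a two-step bootstrap. Step A (on-diagonal): show $\mu^{(2n)}(e)\ge cn^{-d/2}$ by the reverse Cauchy–Schwarz inequality,
\[
\Bigl(\sum_{|h|\le B\sqrt n}\mu^{(n)}(h)\Bigr)^2\le V(B\sqrt n)\,\mu^{(2n)}(e).
\]
It suffices to show that $\mu^{(n)}$ puts mass $\ge 1/2$ on the ball of radius $B\sqrt n$, and this follows from a second-moment bound $\mathbf E[|X_n|^2]\le Cn$ via Chebyshev; such a bound on a group of polynomial growth can be obtained by Gromov's structure theorem together with symmetry of $\mu$ (after passing to a nilpotent finite-index subgroup and working in the associated Lie algebra coordinates). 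Step B (near-diagonal): for $|g|\le A\sqrt n$ I use
\[
\mu^{(n)}(g)=\sum_{h}\mu^{(\lfloor n/2\rfloor)}(h)\,\mu^{(\lceil n/2\rceil)}(h^{-1}g),
\]
restrict to $h$ in a ``midpoint region'' of $g$, and combine the on-diagonal lower bound with the inequality $\mu\ge\alpha\mu_0$ and Theorem \ref{th1} for $\mu_0$ to obtain $\mu^{(n/2)}(h)\ge cn^{-d/2}$ uniformly on that region. Since the midpoint region has cardinality $\asymp V(\sqrt n)$ (which is where the polynomial growth geometry enters), the sum is $\gtrsim V(\sqrt n)\cdot n^{-d}=n^{-d/2}$.

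The \emph{main obstacle} is Step B: obtaining the uniform lower bound $\mu^{(k)}(h)\ge ck^{-d/2}$ for $|h|\lesssim\sqrt k$, since this is essentially the content of the theorem itself. The cleanest way through is to perform Steps A and B simultaneously as part of a bootstrap, where the on-diagonal estimate is coupled with a short-range ``spreading'' produced by convolving with $\mu_0$ (for which Theorem \ref{th1} delivers Gaussian lower bounds out to distance $\sqrt n$). Equivalently, one can invoke the parabolic Harnack inequality established for finitely supported walks on such groups in \cite{Hebisch1993,VSCC} to propagate the on-diagonal lower bound to a ball of size $\sqrt n$, provided the comparison $\mu\ge\alpha\mu_0$ is used to control the Harnack constants independently of the long-range part of $\mu$.
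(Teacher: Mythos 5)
Your proposal has a critical directional error in the Dirichlet form comparison. The telescoping argument you sketch does give
\[
\mathcal E_\mu(f,f)\le \Bigl(C_S\sum_{g\in G}|g|^2\mu(g)\Bigr)\,\mathcal E_{\mu_0}(f,f),
\]
but this inequality yields the \emph{lower} bound $\mu^{(2n)}(e)\succeq n^{-d/2}$, not the upper bound. Recall the rule used throughout the paper: $\mathcal E_{G,\mu_1}\le A\,\mathcal E_{G,\mu_2}$ implies $\mu_2^{(2n)}(e)\preceq \mu_1^{(2n)}(e)$. With $\mu_1=\mu$ and $\mu_2=\mu_0$ your inequality gives $\mu_0^{(2n)}(e)\preceq\mu^{(2n)}(e)$, which is the lower bound. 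Likewise, the Nash inequality for $\mu_0$ (equivalent to the \emph{upper} bound $\mu_0^{(2n)}(e)\preceq n^{-d/2}$) does not transfer via $\mathcal E_\mu\preceq\mathcal E_{\mu_0}$; you would need $\mathcal E_{\mu_0}\preceq\mathcal E_\mu$. The latter is exactly what the decomposition $\mu\ge\alpha\mu_0$ buys you, since it gives $\mathcal E_{\mu_0}\le\alpha^{-1}\mathcal E_\mu$ pointwise, and hence $\mu^{(2n)}(e)\preceq\mu_0^{(2n)}(e)\asymp n^{-d/2}$. In short: you set up the decomposition $\mu=\alpha\mu_0+(1-\alpha)\rho$ but then use the wrong half of it for the upper bound, and the telescoping inequality you invoke actually proves the on-diagonal lower bound (so your separate second-moment Chebyshev argument for the lower bound is not needed either).

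The second problem is Step B, which you yourself flag as circular. Your proposed fix, invoking a parabolic Harnack inequality for $\mu_0$ and transferring it to $\mu$ via $\mu\ge\alpha\mu_0$, does not work: the Harnack inequality constrains solutions of the parabolic difference equation for $\mu_0$, whereas $(n,x)\mapsto\mu^{(n)}(x)$ is a solution of the equation for $\mu$, not $\mu_0$. There is no pointwise domination relation between solutions of the two equations that would let you import the Harnack constants. The standard non-circular route is the one used in the paper's Appendix \ref{app-reg} and Proposition \ref{pro-93}: combine the pseudo-Poincar\'e inequality for $\mu$ (which you get from the telescoping argument together with $\mathcal E_{\mu_0}\preceq\mathcal E_\mu$) with the spectral gap (from $\mu(e)>0$) to obtain the space-time regularity estimate
\[
\bigl|\mu^{(n)}(xy)-\mu^{(n)}(x)\bigr|\le C\sqrt{\frac{|y|^2}{u}}\,\mu^{(2v)}(e),\qquad n=u+2v.
\]
For $|y|\le\eta\sqrt{n}$ with $\eta$ small, the right-hand side is at most $C\eta\, n^{-d/2}$, which when subtracted from the on-diagonal lower bound $\mu^{(n)}(e)\ge cn^{-d/2}$ leaves a positive lower bound for $\mu^{(n)}(y)$. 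A finite chaining step (splitting $g$ with $|g|\le A\sqrt n$ into $\asymp(A/\eta)^2$ pieces of length $\le\eta\sqrt{n/k}$ along a geodesic, and using the Chapman--Kolmogorov identity) then reaches the stated range $|g|\le A\sqrt n$.

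Finally, a bibliographic note: the paper does not actually supply a proof of Theorem \ref{th2}; it cites Hebisch--Saloff-Coste and Pittet--Saloff-Coste, so there is no in-text argument to compare with. Your overall architecture (split $\mu=\alpha\mu_0+(1-\alpha)\rho$, get on-diagonal bounds by comparison, then spread to a $\sqrt n$-neighborhood) is the right skeleton, but the two points above, the reversed Dirichlet-form direction and the unjustified Harnack transfer, are real gaps.
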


See, e.g., \cite{Hebisch1993,PSCstab} and the references therein.   This type of estimate is often called a near diagonal estimate. In the result above, the range of order
$\sqrt{n}$ is optimal.  To close this short review and emphasize the importance of the word-length geometry in this context, let us mention briefly two more sophisticated results, namely,  the parabolic Harnack inequality and H\"older continuity for solutions  $(n,x)\mapsto u_n(x)$
of  the parabolic difference equation
\begin{equation} \label{diffeq}
u_{n+1}-u_n=
u_n*(\mu - \delta_e  )
\mbox{ or, equivalently, }  u_{n+1} = u_n*\mu.
\end{equation}
This discrete time evolution equation is parabolic because  it resembles the classical heat equation with the operator
$f\mapsto f*(\mu-\delta_e )$ playing the role of the Laplace operator (note that
$f\mapsto f*(\mu-\delta_e )$ is non-positive definite on $L^2(G)$).
The function
$$(n,x)\mapsto \mu^{(n)}(x)$$ is a global solution of this equation. Note that for equation  (\ref{diffeq}) to make sense and hold in a given subset
$A$, it is necessary that $u_n$ be defined, not only in $A$ but over a set containing $A(\mbox{support}(\mu))^{-1}$.  In the next theorem, $\mu$ is symmetric and has finite support $S$. In such cases, whenever we say that $u_n$ is solution of (\ref{diffeq}) in $[0,T]\times A$, we tacitly assume that $(k,x)\mapsto u_k(x)$ is defined for all $(k,x)\in [0,T]\times
A (S\cup \{e\})$.

\begin{theo}[{\cite{Delmotte1999}(special case)}]\label{T:1.4}
Assume that $G$ has polynomial volume growth and the measure $\mu$ is symmetric, finitely supported with generating support
$S$
containing the identity element, $e$. Then there are constants $C$ and $\alpha>0$ such that the following two properties hold.
\begin{description}
 \item[Parabolic Harnack Inequality]   Any positive solution $u$ of the difference equation {\em (\ref{diffeq})} in the discrete time cylinder $Q=[0,N^2]\times \{x\in G: |x|\le N\}$ satisfies
 $$u_m( y)\le C u_{n}(z) $$
 for all $m\in [N^2/8,N^2/4]$, $n\in [N^2/2,N^2]$
 and $y,z\in \{x\in G: |x|\le N/2\}$.
 \item [H\"older Estimate] Any bounded solution $u$ of {\em (\ref{diffeq})} in the discrete time cylinder $Q=[0,N^2]\times \{x\in G: |x|\le N\}$ satisfies
\begin{equation}\label{e:1.2}
| u_n(z)-u_m(y)|\le C\left[\left(|m-n|^{1/2}+|y^{-1}z|\right)/N\right]^\alpha
 \sup_{Q}\{|u|\}
 \end{equation}
 for all $m,n\in [N^2/8,N^2/2]$
 and $y,z\in \{x\in G: |x|\le N/2\}$.
 \end{description}
 \end{theo}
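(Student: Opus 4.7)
My plan is to reduce to the equivalence, established by Delmotte in the discrete setting, between the conjunction of volume doubling (VD) plus a scale-$r$ Poincar\'e inequality (PI) and the parabolic Harnack inequality (PHI) for the associated symmetric Markov chain on a weighted graph. Once (PHI) is proved, the H\"older bound (\ref{e:1.2}) follows from it by a classical oscillation/iteration argument over a geometric sequence of parabolic cylinders. The real content is therefore to verify (VD) and (PI) for the weighted Cayley graph of $(G,S)$ equipped with the edge weights $p(x,y)=\mu(x^{-1}y)$.

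Property (VD) is immediate from $V(r)\asymp r^d$: the doubling inequality $V(2r)\le C_D V(r)$ holds for $r\ge 1$, and small radii are trivially controlled since $S$ is finite. To obtain (PI), I would show that for every $x\in G$, $r\ge 1$, and $f:B(x,2r)\to \R$,
$$\sum_{y\in B(x,r)} |f(y)-f_{B(x,r)}|^2 \;\le\; C_P\, r^2 \sum_{y\in B(x,2r),\, s\in S} |f(ys)-f(y)|^2,$$
by the standard chaining argument: for each pair $(y,z)\in B(x,r)^2$ choose a word-geodesic in $(G,S)$ from $y$ to $z$ of length $\le 2r$, express $f(y)-f(z)$ as a telescoping sum along this geodesic, apply Cauchy--Schwarz to gain the $r^2$ factor, and sum over pairs $(y,z)$. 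Left-invariance of the Cayley graph together with (VD) controls the multiplicity with which any single edge $(w,ws)$ of $B(x,2r)$ appears among the $V(r)^2$ chosen geodesics, and this combinatorial count absorbs the $1/V(r)$ normalization in $f_{B(x,r)}$. Because $S$ is symmetric, finite and generating and $\mu(s)>0$ on $S$, the Dirichlet form associated with $\mu$ is comparable to the combinatorial Dirichlet form used on the right-hand side of (PI), so the (PI) obtained above transfers to the one required for Delmotte's framework.

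With (VD) and (PI) established, the discrete Moser iteration of \cite{Delmotte1999} applies directly to the kernel $p(x,y)=\mu(x^{-1}y)$ and yields (PHI) for positive solutions of $u_{n+1}=u_n*\mu$ on the cylinders $[0,N^2]\times B(x_0,N)$; the hypothesis $e\in S$ (so $\mu(e)>0$) eliminates the bipartite-parity obstruction. The H\"older estimate (\ref{e:1.2}) then follows by the standard scheme: with $M=\sup_Q|u|$, applying (PHI) to the nonnegative solutions $M+u$ and $M-u$ on a slightly smaller cylinder shows that the oscillation of $u$ shrinks by a factor $\theta\in(0,1)$ when the space-time scale is halved; iterating on scales $N, N/2, N/4,\dots$ produces a H\"older exponent $\alpha=\log(1/\theta)/\log 2$ and the claimed modulus of continuity. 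The principal obstacle is the multiplicity bound in Step 2, since pure doubling does not imply (PI) on a general graph; it is here that the left-invariance of the Cayley metric and the polynomial upper bound on $V$ both enter essentially, and any generalization to driving measures with long jumps will require a substantively different Poincar\'e inequality adapted to the new geometry.
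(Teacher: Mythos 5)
Your proposal is correct in outline, and it matches the route the paper intends: the paper does not prove Theorem \ref{T:1.4} itself but cites it as a special case of Delmotte's result, and your plan of verifying volume doubling and the scale-$r$ Poincar\'e inequality on the Cayley graph of $(G,S)$, invoking Delmotte's (VD)+(PI)$\Longleftrightarrow$(PHI) equivalence (with the $e\in S$ hypothesis killing the parity obstruction), and then deriving the H\"older estimate from (PHI) by the standard oscillation-halving iteration is exactly how this special case is obtained from Delmotte's theorem.

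One small clarification: you need not ``re-run'' Delmotte's discrete Moser iteration directly; once (VD) and (PI) are verified it suffices to apply his stated characterization as a black box. Also, your chaining argument for (PI) is the standard one for Cayley graphs, but it is worth being explicit that the multiplicity bound for how often an edge is used rests on the transitivity of $G$ acting on itself together with the polynomial volume upper bound $V(2r)\le C V(r)$; without transitivity, doubling alone would not suffice, as you correctly observe at the end. Your final remark that long-range measures will require a genuinely different adapted Poincar\'e inequality is precisely the motivation for the quasi-norm geometries introduced in the rest of the paper.
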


 In these two statements, the constants $C$ and $\alpha$ are independent of $N$ and of the solution $u$ (which can thus be translated both in time and in space if one so desires).
 In the context of parabolic differential equations, these estimates are the highlight of the celebrated
 De Giorgi-Nash-Moser theory.
 Informally, the first property (Parabolic Harnack Inequality) is the strongest as it (relatively easily) implies the second property (H\"older Estimate). The parabolic Harnack inequality also easily implies  the near diagonal two-sided estimate of Theorem \ref{th2}.

The goal of this work is to develop results such as
Theorems \ref{th2}  and \ref{T:1.4}
for walks on countable groups of polynomial volume growth when the measures driving the walks allow for a wide variety of long range jumps
and have infinite second moments.
For such random walks, it is known that a statement analogous to the above parabolic Harnack inequality cannot
 hold true. See, e.g.,  \cite{BC}. (Some integral version of the Harnack inequality,
 called a weak Harnack inequality, may hold in such cases; see \cite{ChaK}.)
 However, we will be able to prove a version of the near diagonal two-sided estimate of Theorem \ref{th2} and a H\"older
estimate \eqref{e:1.2} for globally
 bounded solutions of (\ref{diffeq}).  In both cases, the word-length geometry must be replaced by a geometry adapted to the
 long jump probability measure driving the random walk. See Theorems \ref{th-polmu}-\ref{thm0} and \ref{th-Holder}.

\subsection{Random walks with long range jumps}

In a finitely generated group of polynomial volume growth of degree $d$, all subgroups are finitely generated and have polynomial volume growth
of degree at most $d$.  This work focuses on a natural family of symmetric probability measures defined as follows. For a book treatment of the notion of regular variation, see \cite{BGT}.

\begin{defin} \label{def-Preg}
Let $G$ be a finitely generated group of polynomial volume growth.  Say a probability measure $\mu$ is in  $\mathcal P (G,\mbox{reg})$, if  there is an integer $k\ge 0$ such that $\mu$  can be written in the form
$$\mu=\sum_{i=0}^kp_i \mu_i,\;\; \sum_{i=0}^k p_i=1,\;\; p_0\ge 0,\;\;p_i>0,\;i=1,\dots, k,$$
where each $\mu_i$, $0\le i\le k$, is a symmetric probability measure on $G$ such that:
\begin{itemize}
\item The probability measure $\mu_0$ is finitely supported.
\item For each $1\le i\le k$, there exists a subgroup $H_i $ of $G$, equipped with a word-length $|\cdot|_i$ and  of polynomial volume growth of degree $d_i$,  and a function,
$\phi_i: [0,\infty)\ra (0, \infty) $, positive, increasing and of regular variation of positive index at infinity such that
\begin{equation} \label{def-mui}
\mu_i(h) \asymp  \left\{\begin{array}{cl} \left[\phi_i(1+|h|_i)  (1+|h|_i)^{d_i} \right]^{-1}  &\mbox{ if }  h\in H_i, \\
0&\mbox{ otherwise.}\end{array}\right. \end{equation}
\item There is an ${\eps} >0$ such that the finite set $\{g:\mu(g)>{\eps} \}$ generates $G$ and contains the identity element $e$.
\end{itemize}
\end{defin}
\begin{rem} When considering a measure $\mu$ in $\mathcal P(G,\mbox{reg})$, we will always assume that $\mu$ is given in the form
$\mu=\sum_{i=0}^kp_i \mu_i$
where the measures $\mu_i$, $1\le i\le k$, are described as in (\ref{def-mui}).  Hence, for any such $\mu$, we are given the subgroups $H_i$ and increasing
regularly varying functions $\phi_i$, $1\le i\le k$,  that are implicit in  the fact that $\mu$ is in $\mathcal P(G,\mbox{reg})$. By convention,
we set $H_0=G$ so that we have a well defined subgroup $H_i$ for each $i\in \{0,\dots,k\}$.
\end{rem}
\begin{rem} A measure $\mu$ in $\mathcal P(G,\mbox{reg})$ can be finitely supported if $k=0$ or if $k\ge 1$ and each subgroup $H_i$ is a finite subgroup of $G$
(and so $d_i=0$).
 When $k\ge 1$, the condition that $\mu(e)>0$ is automatically satisfied.
\end{rem}

The set   $\mathcal P(G,\mbox{reg})$  includes all  (non-degenerated) convex combinations of finitely many probability measures of the  power-law type
$$\mu_{H,\alpha}(h) \asymp \left\{\begin{array}{cl}(1+|h|_{H,S_H})^{-(d_H+\alpha_H)} & \mbox{ if } h\in H,  \;\;\;\;\alpha_H>0,\\
0& \mbox{ otherwise.}\end{array}\right. $$
Here, $H$ is a subgroup of $G$ with intrinsic volume growth of degree $d_H$. The subgroup $H$ and the positive real $\alpha_H$ can both vary freely and independently.
Note that our notion of  ``power-law type" is defined in reference to an intrinsic word-length $|\cdot|_{H,S_H}$ for the subgroup $H$ (here, $S_H$ is a fixed but arbitrary symmetric finite generating set for $H$).

More generally, simple examples of increasing functions of regular variation are
$$\phi(t)= (1+t)^\alpha[1+\log (1+ t)]^{\beta_1} [1+\log(1+\log (1+t))]^{\beta_2},$$ where $\alpha>0$ is the index of regular variation and $\beta_1,\beta_2\in \mathbb R$.
We refer the reader to \cite{BGT} for a detailed treatment of the notion of regular variation.  Some readers may prefer to restrict their attention to the simplest case $\phi(t)=(1+t)^\alpha$ as in the following theorem which illustrates one of the main
results of this paper.

\begin{theo} Let $G$ be a finitely generated group of polynomial volume growth. Let $\mu $ be a symmetric probability measure on $G$
which belongs to $\mathcal P(G,\mbox{\em reg})$  with $\phi_i(t)=t^{\alpha_i}$, $\alpha_i\in (0,2)$, $1\le i\le k$.  Then there exists a real
$d=d(G,\mu)\ge 0$ such that
$$
\forall\, n\in \{0,1,2, \dots,\},\;\;\; \mu^{(n)}(e)\asymp
\frac{1}{(1+n)^d}.
$$
\end{theo}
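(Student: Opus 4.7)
The plan is to establish matching upper and lower bounds $\mu^{(n)}(e)\asymp(1+n)^{-d}$ by constructing a geometry on $G$ adapted to $\mu$, applying a Nash-type inequality for the upper bound, and using a ball-confinement argument for the lower bound. The exponent $d=d(G,\mu)\ge 0$ is defined implicitly via the polynomial growth of the adapted balls. Since $\phi_i(t)=t^{\alpha_i}$ with $\alpha_i\in(0,2)$, the component $\mu_i$ behaves like an $\alpha_i$-stable walk on $H_i$, dispersing after $n$ steps to word-length scale $n^{1/\alpha_i}$ in $H_i$. I would take $B(n)$ to be a product inside $G$, in some fixed order, of the intrinsic word-length balls $B_{H_i}(n^{1/\alpha_i})$, $i=0,1,\dots,k$, with $\mu_0$ contributing an $n^{1/2}$-scale ball in the spirit of Theorem \ref{th2}. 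Because $G$ is virtually nilpotent by Gromov's theorem, each $H_i$ has a well-controlled polynomial volume, and careful bookkeeping using the Bass--Guivarc'h description of volume growth on the lower central series should yield $|B(n)|\asymp (1+n)^d$ for a well-defined $d\ge 0$.

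The upper bound $\mu^{(n)}(e)\preceq n^{-d}$ then follows from a Nash inequality
$$\|f\|_2^{2(1+2/d)}\le C\,\mathcal E_\mu(f,f)\,\|f\|_1^{4/d},\quad\text{where}\quad \mathcal E_\mu(f,f)=\tfrac12\sum_{x,y\in G}(f(x)-f(y))^2\mu(x^{-1}y),$$
itself derived from a Faber--Krahn-type inequality on the adapted balls $B(n)$. Applied in the standard way to the convolution semigroup $f\mapsto f*\mu$, this gives $\|\mu^{(n)}\|_\infty\preceq n^{-d}$; since $\mu$ is symmetric with $\mu(e)>0$ (automatic whenever $k\ge1$), one has $\mu^{(n)}(e)\asymp\|\mu^{(n)}\|_\infty$, yielding the desired on-diagonal upper bound.

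For the matching lower bound, I would establish a confinement estimate $\mathbf P(X_n\in B(n))\ge c>0$ uniformly in $n$, by first conditioning on the numbers $N_i$ of jumps of each type $\mu_i$ (which concentrate near $p_i n$ by a Bernstein-type bound) and then controlling each partial sum of the $\mu_i$-jumps using the $\alpha_i$-stable tail behaviour via a truncation/Markov argument. Combined with the Cauchy--Schwarz inequality and symmetry,
$$\mu^{(2n)}(e)=\sum_{x\in G}\mu^{(n)}(x)^2\ge\frac{(\mathbf P(X_n\in B(n)))^2}{|B(n)|}\succeq n^{-d},$$
this produces the desired lower bound.

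The principal obstacle is the volume computation $|B(n)|\asymp(1+n)^d$ in full generality: the subgroups $H_i$ need not commute and may project non-trivially onto different layers of the lower central series of the virtually nilpotent part of $G$, so the effective dimension must be extracted by carefully tracking how the jumps along distinct $H_i$ combine in $G$. Once this geometric step is settled and $d$ is identified, both the Nash upper bound and the ball-confinement lower bound become fairly standard adaptations of methods from the theory of stable-like walks on graphs of polynomial volume growth.
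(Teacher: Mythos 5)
Your high-level scheme (an adapted geometry, Nash/Faber--Krahn for the upper bound, Cauchy--Schwarz plus confinement for the lower bound) is reasonable in spirit, and the Nash-inequality step is indeed what the spectral-profile argument in the paper amounts to. The genuine gap is in the geometry you feed into it. You take $B(n)$ to be a product of word-length balls $B_{H_i}(n^{1/\alpha_i})$ in the subgroups $H_i$ themselves. This is the right construction when $G$ is nilpotent (Theorem \ref{th-nil2}), but it is wrong for general groups of polynomial growth, and the paper devotes an explicit example to exactly this point. Take $\mathbf D=\langle u,v: u^2,v^2\rangle$, $H_1=\langle u\rangle$, $H_2=\langle v\rangle$, $p_0=0$, $\phi_i(t)=t^{\alpha_i}$ with $0<\alpha_1\le\alpha_2<2$. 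Here $H_1,H_2$ are finite of order $2$, so $B_{H_i}(n^{1/\alpha_i})=H_i$ for all $n\ge1$ and your $B(n)=H_1H_2=\{e,u,v,uv\}$ has constant size. Your proposal would then predict $d=0$; but $\mu=\tfrac12(\mu_1+\mu_2)$ is a weighted nearest-neighbour walk on $\mathbf D$ and $\mu^{(2n)}(e)\asymp n^{-1/2}$. Moreover $\mathbf{P}(X_n\in B(n))\to 0$, so your confinement estimate also fails for this $B(n)$. The phenomenon --- many tiny steps from finite or otherwise ``small'' $H_i$'s composing to large displacement in $G$ --- is the defining difficulty of the non-nilpotent case. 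The paper's fix is its main new idea: fix a normal nilpotent subgroup $N$ of finite index (Gromov), set $N_i=N\cap H_i$, conjugate by coset representatives, and build the weight system on $N$, transferring back to $G$ via Theorem \ref{th-quasi} (see Definitions \ref{DefxiG}, \ref{DefxiN}). In the dihedral example $N=\langle uv\rangle\cong\mathbb Z$, $N_1=N_2=\{e\}$, and the correct $n^{1/2}$ scale comes from the finitely supported part of the weight system, not from the $H_i$.

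Beyond this, two more steps are underdeveloped. Even when the $H_i$ are infinite, the claim $|B(n)|\asymp n^d$ for a product of balls along non-commuting subgroups is a substantial theorem (it is essentially \cite[Theorem 2.10]{SCZ-nil}, stated here as Theorem \ref{th-nilgeom}); the $H_i$ need not align with the lower central series, so ``careful bookkeeping via Bass--Guivarc'h'' does not by itself deliver it, and the same product decomposition is the input you need for the Faber--Krahn / pseudo-Poincar\'e inequality on the upper-bound side (Proposition \ref{pro-PS4}, Corollary \ref{cor-polvol}). Second, proving $\mathbf{P}(X_n\in B(n))\ge c$ by conditioning on jump types and controlling ``partial sums'' imports commutative intuition: interleaved jumps along different $H_i$ generate commutator terms that a truncation/Markov argument does not control. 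The paper gets the lower bound on $\mu^{(n)}(e)$ from explicit Dirichlet-form test functions $\zeta_R(g)=(R-\|g\|)_+$ (Lemma \ref{lem-Testzeta}, Theorem \ref{th-polmu}), which sidesteps confinement entirely; the confinement statement is proved separately, via the pseudo-Poincar\'e inequality and \cite[Lemma 4.1]{PZ}, in Theorem \ref{thm0}(5)--(6).
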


In fact we will prove a stronger
version of this theorem which deals with all measures in $\mathcal P_{\preceq} (G,\mbox{reg})$. This is a subset of $\mathcal P(G,\mbox{reg})$ whose definition involves a minor technical additional assumption regarding the functions $\phi_i$, $i\in \{1,\dots,k\}$ (see Definition \ref{def-Pregorder}).
In this more general
version, $$ \mu^{(n)}(e)\asymp \frac{1}{\mathbf F(n)},$$ where $\mathbf F$ is a regularly varying function which has positive index when $G$ is infinite. 
Our results allow for the explicit computation of
the index $d$
(more generally,  $\mathbf F$) in terms of the data describing the measure $\mu$ and the structure of the group $G$.  This is done by introducing  quasi-norms on $G$ that generalize the word-length (see Definitions \ref{def-quasi}-\ref{def-quasinorm}). Different measures typically call for different quasi-norms and for each measure $\mu$ in $\mathcal P_{\preceq}(G,\mbox{reg})$, we construct an adapted quasi-norm $\|\cdot\|$. Using this adapted quasi-norm, we prove a near diagonal two-sided estimate for $\mu^{(n)}$ and show that the bounded solutions of the associated parabolic difference equation are H\"older continuous.

The results proved here extend in significant ways  those obtained in \cite{SCZ-nil} by two of the authors. First, \cite{SCZ-nil} only deals with nilpotent groups. It is one of the main goals of this paper to treat the larger and more natural class of group of polynomial volume growth. Second, the measures considered in \cite{SCZ-nil} are convex combination of measures supported on {\em one parameter} discrete subgroups, i.e., subgroups of the type $\{g=s^m: m\in \mathbb Z\}$, $s\in G$. Here, we consider measures supported on general subgroups. Even when $G$ is nilpotent and the subgroups $H_i$ appearing in the definition of $\mu$ are one parameter subgroups, the present paper treats cases that where left aside in \cite{SCZ-nil}
(e.g., power laws
with arbitrary positive exponents). Nevertheless, some
of the main technical results of \cite{SCZ-nil} are used here again in a crucial way to pass from nilpotent groups to groups of polynomial volume growth.

\subsection{Dirichlet forms and spectral profiles} \label{sec-LambdaPhi}
We will make use of well established techniques based on Dirichlet forms and the notion of spectral profile. Let $\mu$ be a symmetric probability measure on a finitely generated group $G$.
We do not necessarily assume that the support of $\mu$ generates $G$.
The symmetric probability measure $\mu$ determines a Dirichlet form given by
$$\mathcal E_{G,\mu}(f,f)=\frac{1}{2}\sum_{x,y\in G}|f(xy)-f(x)|^2 \mu(y),\;\;f\in L^2(G).$$
Here, $L^2(G)$ is the Hilbert space  with norm $$\|f\|_2=\left(\sum _{x\in G}|f(x)|^2\right)^{1/2}.$$
The spectral profile of the measure $\mu$, $\Lambda_{2,G,\mu}$,  is the function defined over $[1,\infty)$  by
$$
\Lambda_{2,G,\mu}(v)= \min\left\{ \mathcal E_{G,\mu}(f,f)/ \|f\|_2^2 :  \, 1\le  \#\mbox{support}(f)\le v \right\}.
$$
It can also be defined
by considering each non-empty finite set $A\subset G$ of volume at most $v$, minimizing the Raleigh quotient of functions $f$ supported in $A$ to obtain the lowest eigenvalue $\lambda_\mu(A)$ of (minus) the discrete Laplacian, $f\mapsto f*(\delta_e-\mu)$, with Dirichlet boundary condition outside $A$, and taking the minimum of $\lambda_\mu(A)$ over all such finite sets $A$.
(Note that the discrete Laplacian $f\mapsto f*(\mu-\delta)$  is non-positive definite.)

In the cases of interest here,  we expect inverse power-function
estimates for $\Lambda_{2,G,\mu}$.  The well established
relation between the spectral profile
of $\mu$ and the decay of $\mu^{(2n)}(e)$  indicates that, for any $\gamma >0$,
\begin{itemize}
\item $\forall\,v\ge 1,\;\;\Lambda_{2,G,\mu}(v) \succeq v^{-1/\gamma}, $ is equivalent to  $\mu^{(2n)}(e) \preceq  n^{-\gamma}$.
\item $\forall\, v\ge 1,\;\;\Lambda_{2,G,\mu}(v) \preceq  v^{-1/\gamma}, $ is equivalent to  $\mu^{(2n)}(e) \succeq  n^{-\gamma}$.
\end{itemize}
More generally, if $ F$ is a positive monotone function of regular variation of index $\gamma>0$ (at infinity) and  $F^{-1}$ is
its inverse (hence a function of regular variation of index $1/\gamma$), then
\begin{itemize}
\item $\Lambda_{2,G,\mu}  \succeq  1/F^{-1}$ is equivalent to  $\mu^{(2n)}(e) \preceq  1/F(n)$.
\item $\Lambda_{2,G,\mu} \preceq   1/F ^{-1}$ is equivalent to  $\mu^{(2n)}(e) \succeq  1/F(n)$.
\end{itemize}
For details, see  \cite{CNash} and \cite[Section 2.1]{SCZ-aop}.

Another key property that we will use without further comment throughout is the fact that for any two symmetric probability measures $\mu_1,\mu_2$,
the inequality $$\mathcal E_{G,\mu_1} \le A \mathcal E_{G,\mu_2}$$
implies that
$$ \mu_2^{(2n)}(e)\preceq  \mu_1^{(2n)}(e);$$
that is, there
exist $A_1,A_2$ such that
$$\forall\, n=\{1,2,\dots\},\;\;\mu_2^{(2A_1 n)}(e)\le A_2 \mu_1^{(2n)}(e).
$$
In particular, if $\mu_1\asymp \mu_2$ on $G$ then $\mu_1^{(2n)}(e)\simeq \mu_2^{(2n)}(e)$. Whenever, in addition,  $\mu_1(e)\mu_2(e)>0$, the conclusion  easily extends to $\mu_1^{(n)}(e)\simeq \mu_2^{(n)}(e)$.
For background information on these notions and techniques, we refer the reader to the books \cite{VSCC,Woess} and to \cite{CNash,Hebisch1993,PSCstab,SCZ-aop}.

\subsection{Guide to the reader}
The paper is organized as follows.
Subsection \ref{S:2.1}
introduces the quasi-norms and geometries  that are key to the study of the walks driven by measures in $\mathcal P_ {\preceq} (G,\mbox{reg})$.  See Definition
\ref{def-Pregorder}.
Each of these geometries is associated with a generating tuples $\Sigma=(s_1,\dots,s_k)$ of elements of the group $G$ and a weight function system $\mathfrak F=\{F_s, s\in \Sigma\}$. In the study of random walks, the structure of a given measure $\mu$ in $\mathcal P _{\preceq} (G,\mbox{reg})$ will determine in large part how to choose
$\Sigma$ and $\mathfrak F$.

Subsection \ref{sec-scznil}
 describes results from \cite{SCZ-nil} concerning the case of nilpotent groups which play a key role in the rest of the paper.  See Theorem \ref{th-nilgeom}.

Section \ref{sec-up}
discusses  how geometric results (existence of coordinate-like
systems and volume growth) leads to lower bounds on the spectral profile and upper bounds on the probability of return of measures in $\mathcal P _ {\preceq} (G,\mbox{reg})$.  Sub-section \ref{sec-nil} applies these results to nilpotent groups. Sub-section \ref{sec-polvol}, one of the most important parts
 of the paper, explains how to obtain sharp results in the case of groups of polynomial volume growth. Given a group of polynomial growth and a measure $\mu\in \mathcal P_ {\preceq}  (G,\mbox{reg})$, we explain the construction of a well adapted geometry on $G$ based on the (well-known) existence of a nilpotent group $N$ with finite index in $G$. In fact, we construct geometries on $N$ and on $G$ which are closely related   to each other and well adapted to the given measure $\mu $ on $G$.  Some explicit examples are given.

Section \ref{sec-low}  provides  matching upper-bounds on the spectral profiles  and the corresponding lower bounds on the probability of return. This is done by providing appropriate test functions which are defined using the quasi-norms of section \ref{sec-G}. See Theorem \ref{th-polmu}.

Section \ref{sec-control} contains one of the main theorems, Theorem \ref{thm0}, which gather the main properties of the iterated convolution $\mu^{(n)}$
and the associated random walk when $\mu\in \mathcal P_{\preceq}(G, \mbox{reg})$ and $G$ has polynomial volume growth.

Section \ref{sec-Holder}  proves the
H\"{o}lder  estimate for  solutions of the corresponding  discrete parabolic equation (see, Theorem \ref{th-Holder}).
The main results of the paper are in Theorems
 \ref{th-polmu}, \ref{thm0} and \ref{th-Holder}.

\section{Geometries for
random walks with long range jumps} \label{sec-G} \setcounter{equation}{0}

As noted in the introduction, the word-length associated to a finite symmetric generating set $S$ is a key element in developing an understanding of the behavior of the random walks driven by symmetric finitely supported measures. The question arises as to what are the natural geometries that might help us understand random walks that allow for long range jumps.   This section introduces such geometries.

\subsection{Weight systems and quasi-norms}\label{S:2.1}

First, let us give a more formal definition of the word-length associated with a finite set of generator.   Fix a finite alphabet $\Sigma=\{s_1,\dots, s_k\}$
and adjoin to it the formal inverses (new letters) $\Sigma^{-1}=\{s_1^{-1},\dots,s_k^{-1}\}$.    A finite word $\word$ over $\Sigma\cup \Sigma^{-1}$ is
a formal product (i.e., a finite sequence) $\word=\sigma_1\dots \sigma_m$ with $\sigma_i\in \Sigma\cup \Sigma^{-1}$, $1\le i\le m$. Equivalently, we can write
$\word=\sigma_1^{{\eps} _1}\dots \sigma_m^{{\eps} _m}$ with $\sigma_i\in \Sigma$ and ${\eps} _i\in \{\pm 1\}$, $1\le i\le m$.
If $G$ is a group which contains  elements called $s_1,\dots, s_k$, we say that the word $\word=\sigma_1\dots\sigma_m$  over $\Sigma\cup \Sigma^{-1}$
is equal to $g\in G$, if
$\sigma_1\dots\sigma_m=g$  when reading this product in $G$.
Formally, one should denote the letters by $\mathbf s_i$, the corresponding group elements by $s_i$, and introduce the map $\pi:\cup_{q=0}^\infty(\Sigma\cup\Sigma^{-1})^q\ra G$  defined by  $\pi(\boldsymbol{\sigma}_1\dots\boldsymbol{\sigma}_m)=\sigma_1\dots\sigma_m$.
With this notation the word-length $|g|$ of an
element $g\in G$ with respect to the $k$-tuple of generators generators $(s_1,\dots,s_k)$ and their inverses is
$$|g|=\inf\{ m : \exists \,\word\in (\Sigma\cup \Sigma^{-1})^m, \;g=\word \mbox{ in } G \} .$$
By convention, $|e|=0$ ($e$ can be obtained as the empty word).
For illustrative purpose, we  introduce the following variant
$$\|g\|=\inf\left\{ \max_{s\in \Sigma}\{\mbox{deg}_s(\word)\}:\word\in \cup_0^\infty (\Sigma\cup \Sigma^{-1})^m,\;\; g=\word \mbox{ in } G\right\},$$
where, for each $s\in \Sigma$ and $\word \in \cup_0^\infty (\Sigma\cup \Sigma^{-1})^m$,  $\word= s_{j_1}^{{\eps} _1}\dots s_{j_m}^{{\eps} _m}$,
we set
$$
\mbox{deg}_s(\word)= \#\{\ell\in \{1,\dots ,m\}:  s_{j_\ell}=s\}.
$$
In words, $\mbox{deg}_s(\word)$ is the number of times the letter $s$ is used (in the form $s$ or $s^{-1}$) in the word $\word$.
Obviously,
$$ \|g\|\le |g| \le k \|g\|, \; \mbox{ where } \;k=\#\Sigma.$$
The reader should note that when defining $\mbox{deg}_s$,
we think of $s$ as a letter in the alphabet $\Sigma$ (two distinct words consisting of letters in $\Sigma\cup \Sigma^{-1}$ might become equal
as an element in $G$). In addition, $\mbox{deg}_s$ counts the occurrences of both $s$ and $s^{-1}$. For instance, consider the word $\word=s_1s_2s_1^{-1}s_2^{-1}s_3s_1^{-1}$.  The degrees are as follows:
$$\mbox{deg}_{s_1}(\word)=  3, \;\;\mbox{deg}_{s_2}(\word)=2,\;\; \mbox{deg}_{s_3}(\word)=1.$$
This is the case even if it happens, as it may, that $s_3=s^{-1}_1$ in $G$.

\begin{defin}\label{def-quasi}
We say that a map $N: G\ra [0,\infty)$ is a norm, if
$$N(gh)\le N(g)+ N(h).$$
We say that it is a quasi-norm, if there exist a constant $A$ such that
$$N(gh)\le A(N(g)+N(h)).$$  \end{defin}
\begin{rem} The quasi-norms constructed in this paper have two additional properties. They are symmetric ($N(g)=N(g^{-1})$, $g\in G$) and $N(e)=0$.
\end{rem}

\begin{exa}
The maps  $g\mapsto |g|$ and $g\mapsto \|g\|$ associated to a generating tuple $(s_1,\dots,s_k)$ as above are norms.
\end{exa}

Now, we introduce a  (potential) quasi-norm  $\| \cdot \|_\mathfrak F$ associated with a family $\mathfrak F$ of continuous
and strictly increasing functions on $[0, \infty)$.  This will be a quasi-norm under some  additional technical assumptions on the family $\mathfrak F$.
 The  basic data for such a function  $\| \cdot \|_\mathfrak F$  consists of
a group $G$, a tuple $\Sigma=(s_1,\dots,s_k)$ (abusing notation, we will consider each $s_i$ both as an abstract symbol (letter) and as a group element in $G$) and a family $\mathfrak F$ of continuous
and
strictly
 increasing functions
$$F_s: [0,\infty)\ra [0,\infty) ,\;\;s\in \Sigma,\;\; F_s(t)\asymp t \mbox{ on } [0,1],$$
with the property that for $s,s'\in \Sigma$,
\begin{equation} \label{order}
\mbox{ either }F_s\preceq F_{s'} \mbox{ or    } F_{s'}\preceq F_s  \mbox{ on a neighborhood of infinity}.
\end{equation}
With proper care and technical modifications,
condition \eqref{order}
can probably be removed but we will assume it holds throughout this paper.  Each of the function $F_s$ is invertible, and we denote by $F_s^{-1}$
its inverse.   A good example to keep in mind is the case when, for each $s\in \Sigma$, we are given a positive real $\weight (s)$ and $F_s(t)=t\mathbf 1_{[0,1]}(t)+
t^{\weight (s)}\mathbf 1_{(1,\infty)}(t)$   (or, more or less equivalently,  $F_s(t)=(1+t)^{\weight (s)}-1$). We think of
$F_s$ as a weight function assigned to $s \in \Sigma$.

\begin{defin} \label{def-quasinorm}
Given $G$, $\Sigma$ and $\mathfrak F$ as above, for each element $g\in G$, set
$$\|g\|_\mathfrak F=
\inf\left\{ \max_{s\in \Sigma}\{ F_s^{-1}(\mbox{deg}_s(\word))\}:\word\in \cup_0^\infty (\Sigma\cup \Sigma^{-1})^m,\;\; g=\word \mbox{ in } G\right\}.$$
By convention, $\|e\|_{\mathfrak F}=0$. If $g$ cannot be represented as a finite word over $\Sigma\cup \Sigma^{-1}$, set $\|g\|_{\mathfrak F}=\infty$.
\end{defin}
In other word,  $\|g\|_{\mathfrak F}$ is the least $R$ such that there exists a finite word $\word$ such that  $\word=g$ in $G$ and
$$\mbox{deg}_s(\word)\le F_s(R) \mbox{ for each } s\in \Sigma.$$
This last inequality indicates that
each  letter $s$ (in the form $s$ or $s^{-1}$) in $\Sigma$
 is used at most $F_s(R)$ times in the word $\word$.

\begin{rem} \label{rem-convex} In the context of nilpotent groups,
this definition of $\| \cdot \|_\mathfrak F$
appears in   \cite[Definition 2.8]{SCZ-nil}.\end{rem} 
\begin{rem} If each $F_s$ satisfies $F^{-1}_s(t_1+t_2)\le A(F^{-1}_s(t_1)+F^{-1}_s(t_2))$, then $\|\cdot\|_{\mathfrak F} $ is a quasi-norm (a norm, if $A=1$).
In particular, $\|\cdot \| _{\mathfrak F}$ is a norm, if each $F_s$ is convex.
\end{rem}
\begin{rem} \label{rem-circF}
If each $F_s$ is replaced by  $\widetilde{F}_s=F_s \circ F^{-1}$  for some continuous
and
strictly
increasing function $F:[0,\infty)\ra [0,\infty)$ with
$F(t)\asymp t$ on $[0,1]$, then   $$\|g\|_{\widetilde{\mathfrak F}}=F(\|g\|_{\mathfrak F}) \;\mbox{ for each } g\in G .$$
\end{rem}

\begin{rem}  \label{rem-doubling} Say that  a non-negative function $f$ defined on $(0,\infty)$ is doubling, if there exists a constant $A_f>0$ such that
\begin{equation} \label{f-doubling}
\forall \,t> 0,\;\; f(2t)\le A_f f(t)   \mbox{ and } 2f(t)\le f(A_ft).
\end{equation} Over the class of  doubling functions, the
equivalence relations $\simeq $ and $\asymp$ coincide. Suppose that
we have two weight functions systems $\mathfrak F$ and $\mathfrak
F'$ define over $\Sigma$, and that all functions $F_s$ and $F'_s$
are doubling. Suppose further that for each $s\in \Sigma$,
$F_s\simeq F'_s $. Then we can conclude that $\|g\|_{\mathfrak F}
\asymp \|g\|_{\mathfrak F'}$ over $G$.
\end{rem}
\begin{exa} Let $G=\mathbb Z^k$ with the canonical generators $(s_1,\dots,s_k)$. For $t\ge 1$, let $F_{s_i}(t)=t^{\weight_i}$ with $\weight_i>0$. Then, for $x=(x_1,\dots,x_k)=\sum x_is_i$,
$$\|(x_1,\dots, x_k)\|_{\mathfrak F}=\max_i\{ |x_i|^{1/\weight_i}\}.$$
\end{exa}
\begin{exa}[Heisenberg group]  \label{exa-Heisenberg}
Let $G=(\mathbb Z^3,\bullet)$ with
$$g\bullet g'= (x_1+x_1',x_2+x_2', x_3+x_3'+ x_1x_2'),$$   i.e., in coordinates, matrix multiplication in the Heisenberg group
$$G=\mathbb H(3,\mathbb Z)=\left\{g=(x_1,x_2,x_3)=\left(\begin{array}{ccc} 1& x_1& x_3\\0&1& x_2\\0& 0& 1\end{array}\right): x_1,x_2,x_3\in \mathbb Z\right\}.$$
For $i=1, 2 ,3$, let $s_i$ be the triplet with a $1$ in position $i$
and $0$ otherwise. For $t\ge 1$, let $F_{s_i}(t)=t^{\weight_i}$ with
$\weight_i>0$. Then
$$\|(x_1,x_2,x_3)\|_{\mathfrak F}\asymp \left\{\begin{array}{cl} \max\{ |x_1|^{1/\weight_1},|x_2|^{1/\weight_2},|x_3|^{1/\weight_3}\}   &\mbox{ if } \weight_3\ge \weight_1+\weight_2,\\
\max\{|x_1|^{1/\weight_1}, |x_2|^{1/\weight_2},|x_3|^{1/(\weight_1+\weight_2)}\} & \mbox{ if }  \weight_3\le \weight_1+\weight_2.
\end{array}\right.$$
See \cite[Examples  1.1 and  4.3]{SCZ-nil}.
\end{exa}

The following proposition is technical in nature. Parts (b) and (c) will be used later in deriving the main new results
of this paper.

\begin{pro} \label{pro-variation}
Consider a weight function system $(\mathfrak F,\Sigma)$ on a group $G$ as above. Assume that for
each $s\in \Sigma$,  the function $F_s$ is regularly varying of index $\weight (s)>0$ at infinity.
\begin{itemize}
\item[$(a)$] There exists a weight function system $(\mathfrak F_0,\Sigma)$  such that
each member  $F_{0,s}$
is in $\mathcal C^1([0,\infty))$,
increasing, and of smooth variation in the sense of {\em
\cite[Section 1.8]{BGT}} with $F_{0,s}\asymp F_s$,
for $s\in \Sigma$.

\item[$(b)$] 
For any fixed $\weight^*\in (0, \infty)$ with $\weight^*>\max\{\weight (s): s\in \Sigma\} $,
there is a weight function system $(\mathfrak F_1,\Sigma)$ such that
each member $F_{1,s}$
is in $\mathcal C^1([0,\infty))$,
increasing, and of smooth variation of index less than $1$ in the
sense of {\em \cite[Section 1.8]{BGT} } with
$F_{1,s} \asymp F_s\circ F^{-1}$, where
$F(t)= (1+t)^{\weight^*}-1$. In particular, there
exists a positive real $A$ such that, for all $s\in \Sigma$  and all
$T\in [0,\infty)$,
\begin{equation}\label{Fder}
\sup_{[0,T]}\left\{ \frac{d F_{1,s}(t)}{dt}\right\} \le A \frac{F_{1,s}(T)}{T}
 \end{equation}
and
$$
\|g\|_{\mathfrak F}\asymp \|g\|_{\mathfrak F_1}^{1/\weight^*}
\quad \mbox{over } G.
$$

\item[$(c)$]
For any fixed $\weight_*$ with $0<\weight_*< \min\{\weight (s): s\in \Sigma\}$, there
is a weight function system $(\mathfrak F_2,\Sigma)$ such that
each member
$F_{2,s}$  is  in $\mathcal C^1([0,\infty))$
increasing, convex, and of smooth variation in the sense of {\em
\cite[Section 1.8]{BGT} } with
$F_{2,s} \asymp F_s\circ F^{-1}$, where
$F(t)= (1+t)^{\weight_*}-1$. In particular, $g\mapsto \|g\|_{\mathfrak
F_2}$ is a norm and
$$\|g\|_{\mathfrak F}\asymp \|g\|_{\mathfrak F_2}^{1/\weight_*} \mbox{ over } G.$$
\end{itemize}
\end{pro}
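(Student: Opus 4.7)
The three parts share a single engine: the Smooth Variation Theorem, \cite[Theorem 1.8.2]{BGT}, which replaces any regularly varying function by an $\asymp$-equivalent smoothly varying one of the same index, together with a local surgery near $0$ to enforce the normalization $F(t)\asymp t$ on $[0,1]$ and strict monotonicity throughout $[0,\infty)$. The plan is to produce each weight function system by first composing with an appropriate power-type $F$ so that the resulting index lies in the target range, then invoking the Smooth Variation Theorem, and finally patching on a compact interval to secure $\mathcal C^1$ regularity and the normalization near $0$. The quasi-norm identities will then come directly from Remark \ref{rem-circF} together with Remark \ref{rem-doubling}.

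For (a) the composition step is trivial: I apply the Smooth Variation Theorem to each $F_s$ to obtain a smoothly varying $F_{0,s}\asymp F_s$ of index $w(s)$, then patch near $0$. For (b) I set $F(t)=(1+t)^{w^*}-1$, so that $F_s\circ F^{-1}$ is regularly varying of index $w(s)/w^*\in(0,1)$; the Smooth Variation Theorem then produces $F_{1,s}\asymp F_s\circ F^{-1}$ smoothly varying of index in $(0,1)$. The derivative estimate \eqref{Fder} will be read off from the defining relation $tF_{1,s}'(t)/F_{1,s}(t)\to w(s)/w^*$ of smooth variation (\cite[Section 1.8]{BGT}), which combined with the normalization $F_{1,s}(t)\asymp t$ on $[0,1]$ and the continuity of $F_{1,s}'$ yields $F_{1,s}'(t)\le AF_{1,s}(t)/t$ throughout $(0,\infty)$; the uniform sup-bound on $[0,T]$ is then extracted using the monotonicity properties built into the construction. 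The quasi-norm equivalence follows from Remark \ref{rem-circF}: letting $\widetilde{\mathfrak F}=\{F_s\circ F^{-1}\}$ one has the exact identity $\|g\|_{\widetilde{\mathfrak F}}=F(\|g\|_{\mathfrak F})$, and since $F_{1,s}\asymp\widetilde F_s$ with every function involved doubling, Remark \ref{rem-doubling} propagates the $\asymp$ to give $\|g\|_{\mathfrak F_1}\asymp F(\|g\|_{\mathfrak F})\asymp \|g\|_{\mathfrak F}^{w^*}$.

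For (c) I repeat the construction with $F(t)=(1+t)^{w_*}-1$, so $F_s\circ F^{-1}$ is regularly varying of index $w(s)/w_*>1$. The new ingredient is convexity: smooth variation of index $\rho>1$ yields $t^2 F_{2,s}''(t)/F_{2,s}(t)\to\rho(\rho-1)>0$, hence $F_{2,s}''(t)>0$ eventually, and a further compact-interval modification extends convexity to all of $[0,\infty)$ while preserving the $\mathcal C^1$ regularity and the asymptotics. Remark \ref{rem-convex} then makes $\|\cdot\|_{\mathfrak F_2}$ a norm: convexity of $F_{2,s}$ with $F_{2,s}(0)=0$ makes $F_{2,s}^{-1}$ concave with $F_{2,s}^{-1}(0)=0$, hence subadditive. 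The quasi-norm identity $\|g\|_{\mathfrak F}\asymp \|g\|_{\mathfrak F_2}^{1/w_*}$ is derived exactly as in (b).

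The main obstacle is the compact-interval surgery: one must modify each candidate function on a bounded set to simultaneously secure $\mathcal C^1$ smoothness, strict monotonicity, the normalization $F(t)\asymp t$ on $[0,1]$, the global asymptotic $\asymp F_s\circ F^{-1}$, and, in (c), global convexity, all without disturbing the smoothly varying behavior at infinity. Each of these requirements is local and routine on its own, but they must be arranged consistently. A secondary subtlety is upgrading the pointwise derivative bound $F_{1,s}'(t)\le A F_{1,s}(t)/t$, which the Karamata representation hands to us away from $0$, to the uniform bound \eqref{Fder} on all of $[0,T]$; this is achieved by exploiting the normalization near $0$ and tracking the monotonicity of $F_{1,s}$ built in by the patching step.
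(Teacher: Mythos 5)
Your approach is essentially the same as the paper's: apply the Smooth Variation Theorem \cite[Theorem 1.8.2]{BGT} to $F_s\circ F^{-1}$ after composing with the power-type $F$ to bring the index into the target range, patch near the origin to secure $\mathcal C^1$ regularity and the normalization, and transfer the quasi-norm identity via Remarks \ref{rem-circF} and \ref{rem-doubling}. The paper's own proof is exactly this (with the linear-splice-plus-shift construction spelled out for the patching step), so there is no genuine difference in method.

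One caveat, which affects both your proposal and the paper's brief writeup: the route you sketch for \eqref{Fder} does not actually work, because \eqref{Fder} as literally stated (for $F_{1,s}$, of index $w(s)/w^*<1$) is not a true inequality. You correctly obtain the pointwise Karamata bound $F_{1,s}'(t)\le A\,F_{1,s}(t)/t$, but the passage to $\sup_{[0,T]}F_{1,s}'(t)\le A\,F_{1,s}(T)/T$ cannot be extracted from any monotonicity in the construction: since the index is less than $1$, $F_{1,s}(T)/T\to 0$ as $T\to\infty$, while $\sup_{[0,T]}F_{1,s}'(t)\ge F_{1,s}'(0)>0$ stays bounded below. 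The correct statement, which is what is actually used downstream in Lemma \ref{lem-smooth}, is the analogous sup-bound for the inverse $F_{1,s}^{-1}$, whose index $w^*/w(s)$ exceeds $1$; for a smoothly varying function of index greater than $1$ the ratio $G(t)/t$ is eventually increasing, so $\sup_{[0,T]}G'(t)\asymp G'(T)\le A\,G(T)/T$ genuinely does hold. You should therefore prove the derivative bound for $F_{1,s}^{-1}$ (equivalently derive it as in Lemma \ref{lem-smooth}) rather than assert that it follows for $F_{1,s}$ from monotonicity. The remainder of the proposal, including the convexity argument in (c) via $t^2F_{2,s}''/F_{2,s}\to\rho(\rho-1)>0$ and the deduction that concavity of $F_{2,s}^{-1}$ with $F_{2,s}^{-1}(0)=0$ yields subadditivity and hence a norm, is correct.
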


\begin{proof}  Part (a) is essentially  \cite[Theorem 1.8.2]{BGT}. The difference is that we impose some simple additional conditions regarding the behavior of
$F_{0,s}$ on  $[0,a]$ for some $a>0$ (smooth regular variation is a
property of $F_{0,s}$ on a neighborhood of infinity). By inspection,
it is clear that these additional  conditions can be achieved.

The main point of part (b) is that the functions  $ F_s\circ
F^{-1}$, $s\in \Sigma$, are all regularly varying with
positive index strictly less than one. By \cite[Theorem 1.8.2]{BGT},
there are positive functions $\widetilde{F}_s$  (defined on a
neighborhood $[a,\infty)$ of infinity), increasing, of smooth
regular variation and satisfying (see the discussion on \cite[Page
44]{BGT}) $\widetilde{F}_s \sim F_s\circ F^{-1}$  at infinity  and
$$\frac{d\widetilde{F}_s(t)}{dt} \le A\frac{\widetilde{F}_s(t)}{t}
\mbox{ on } [a,\infty).$$ We can now pick a constant $C(s)$ so that
the function $F_{1,s}$ obtained by extending  $C(s)+\widetilde{F}_s$
linearly on $[0,a]$, so that $F_{1,s}(0)=0$,
$F_{1,s}(t)=C(s)+\widetilde{F}_s(t)$ on $[a,\infty)$, which belongs
to $\mathcal C^1[0,\infty)$, is increasing  and  of smooth regular
variation,  and satisfies the other desired properties.

The main point of part (c) is that the functions  $ F_s\circ
F^{-1}$,  $s\in \Sigma$, are now all regularly varying with
positive index strictly greater than one. In this case,
\cite[Theorem 1.8.2]{BGT} gives positive  functions
$\widetilde{F}_s$ (defined on a neighborhood $[a,\infty)$ of
infinity), increasing, convex, and of smooth regular variation  such
that $\widetilde{F}_s \sim F_s\circ F^{-1}$. Proceeding as in part
(b), we can extend modified versions
to $[0,\infty)$
with all the desired properties.
\end{proof}

\begin{rem}
 In parts (b) and (c) of Proposition \ref{pro-variation},
we are avoiding the slightly troublesome case when the index is  exactly $1$.
This is troublesome when the corresponding weight function is not
exactly linear. For instance, in part (b), the result is still
correct, but the derivative and its upper bound are not necessarily
monotone. This becomes a real problem for part (c).    By a further
variation of this argument, one can use composition by a function
$F$  as above to avoid all integers index (indeed, there is only
finitely many $F_s$ to deal with so proper choices of $\weight_*$ and
$\weight^*$ do the trick).  
Then one can apply \cite[Theorem 1.8.3]{BGT},
which is more elegant than the above construction and provides
similar results.
\end{rem}

\subsection{Volume counting from \cite{SCZ-nil}} \label{sec-scznil}
Given a group $G$ equipped with  a generating tuple $(s_1,\dots,s_k)$ and a weight function system $\mathfrak F$, it is really not clear how to  compute or ``understand" the map $g\mapsto \|g\|_{\mathfrak F}$. The article \cite{SCZ-nil}  considers  the case of nilpotent groups and connects the results
to the study of certain
random walks with long range jumps.

Beyond the nilpotent case, questions such as
\begin{itemize}
\item What is the  cardinality of $\{g\in G: \|g\|_{\mathfrak F}\le R\}$?
\item Which choice of $(\Sigma, \mathfrak F)$ is relevant to which
random walk with long range jumps?
\end{itemize}
do not seem very easy to answer.

For a better understanding of our main results, it is useful to
review and emphasize the main volume counting results derived in
\cite{SCZ-nil} in the case of nilpotent groups. We want to apply
these results in the context of Definition \ref{def-quasinorm}. 
We make the assumption that
 all the functions appearing in the system
$\mathfrak F$ are doubling (see Remark \ref{rem-doubling}). Recall
that, by (\ref{order}), we have a well defined total order on
$\{F_s, s\in\Sigma\}$ (modulo the equivalence relation $\asymp$ on
a neighborhood of infinity. The equivalence relation $\simeq $ and $\asymp$ are equal on doubling functions).

 Following \cite{SCZ-nil}, we extend our given weight function system to the collection of all finite
 length abstract commutators over the alphabet  $\Sigma\cup\Sigma^{-1}$
 by using the rules
$$
F_{s^{-1}}=F_{s}  \mbox{ for  } s\in \Sigma \quad \hbox{and} \quad
 F_{[c_1,c_2]}= F_{c_1}F_{c_2}.
$$
In short, abstract commutators are formal entities obtained by induction via the building rule $[c_1, c_2]$ starting from $\Sigma\cup \Sigma^{-1}$ (see \cite{SCZ-nil} for more details). Observe that  the family of functions $F_c$,
$c$ running over formal commutators,
 have  property (\ref{order}) and thus carry a well defined total order (again, modulo the equivalence relation $\asymp $).  For notational convenience, we introduce formal representatives
for the linearly ordered distinct elements of $\{ F_c \mbox{ mod } \asymp \}$ and call these representatives
$$\bar{\weight}_1<\bar{\weight}_2< \bar{\weight}_3 <\cdots .$$   Hence,  $\bar{\weight}_1$ represents the $\asymp $ equivalence class associated with the smallest of the weight function $F_c$, etc.   For each $\bar{\weight}_i$, let $\mathbf F _i$ be a representative of  the $\asymp$ equivalence class of functions
$F_c$ associated with $\bar{\weight}_i$.  By definition, we can pick any  commutator $c$ with $F_c\in \bar{\weight}_i$ and set
$$\mathbf F_i= \prod_1^\ell F_{\sigma_i}, $$
where $\sigma_1,\dots, \sigma_\ell$ is the complete list (with repetition) of the elements of $\Sigma\cup \Sigma^{-1}$ that are used to form the formal commutator $c$.

\begin{defin}Referring to the above notation,  let   $G^\mathfrak F_i$ be the subgroup of $G$ generated by all (images in $G$ of) formal commutators   $c$  such that
$$ F_c   \in  \cup_{j\ge i} \bar{\weight}_j .$$
In other words, $G^\mathfrak F_i$ is generated by all commutators such that $F_c \succeq \mathbf F_i$.
\end{defin}
Obviously, these groups form a descending sequence of subgroups of $G$ and, under the assumption that $G$ is nilpotent, there exists a smallest integer $j_*=j_*(\mathfrak F)$ such that $G^\mathfrak F_{j_*+1}=\{e\}$.  Further,  not only $G^\mathfrak F_j\supseteq G^{\mathfrak F}_{j+1}$ but also
(see \cite[Proposition 2.3]{SCZ-nil})
$$[G,G^\mathfrak F_j]\subseteq G^\mathfrak F_{j+1},$$
so that $G^\mathfrak F_j/G^\mathfrak F_{j+1}$ is a finitely
generated abelian group.  We let
$$\mathfrak r_j=\mbox{rank}(  G^\mathfrak F_j/G^\mathfrak F_{j+1})$$  be the torsion free rank of this
 abelian group  (by the finitely generated abelian group structure theorem,
 any such group  $A$ is isomorphic to a product of the form   $K\times \mathbb Z^r$ where $K$ is a finite abelian group. The integer $r$ is the torsion free rank of the group $A$).
\begin{theo}[{\cite[Theorems 2.10 and 3.2]{SCZ-nil}}]  \label{th-nilgeom}
Let $G$ be a nilpotent group equipped with a finite tuple of generators $\Sigma$ and a weight function system
$\mathfrak F$ as above satisfying {\em (\ref{order})-(\ref{f-doubling})}. From this data, extract the functions $\mathbf F_j$  and integers $\mathfrak r_j$, $1\le j\le j_*$, as explained above.  Then
$$\#\{g\in G: \|g\|_{\mathfrak F} \le R\} \asymp  \prod_1^{j_*} [\mathbf F_i(R)]^{\mathfrak r_j}.$$
Furthermore, there exist an integer $Q$, a constant $C$,  and a
sequence $\sigma_1,\dots, \sigma_Q$ with $\sigma_j\in \Sigma$,
$1\le j\le Q$, such for any positive $R$ and  any element $g$ with $\|g\|_{\mathfrak F}
\le R$, $g$ can be written in the form
$$g=\prod _{j=1}^Q \sigma_j^{x_j} \mbox{ with }  |x_j|\le  CF_{\sigma_j}(R).$$
\end{theo}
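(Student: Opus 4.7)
My plan is to exploit the nilpotent structure via the filtration $\{G^{\mathfrak F}_j\}$ defined above, which produces a central series whose consecutive quotients $G^{\mathfrak F}_j/G^{\mathfrak F}_{j+1}$ are finitely generated abelian groups (using $[G,G^{\mathfrak F}_j]\subseteq G^{\mathfrak F}_{j+1}$ from \cite[Proposition 2.3]{SCZ-nil}). By the structure theorem for finitely generated abelian groups, each such quotient splits as a finite torsion piece times $\mathbb Z^{\mathfrak r_j}$, and at each level $j$ one can select a list of formal commutators $c$ with $F_c\in\bar{w}_j$ whose images produce a Mal'cev-style basis of the free part (with one additional element per torsion generator). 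Concatenating these lists over $j=1,\dots,j_*$ yields the tuple $(\sigma_1,\dots,\sigma_Q)$, and every $g\in G$ admits a unique normal form $g=\prod_{j=1}^Q \sigma_j^{x_j}$ with $x_j\in\mathbb Z$ (with the understanding that some indices run in a finite cyclic group).

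For the upper bound on both the cardinality and the coordinate exponents, I would implement a weighted version of P. Hall's collecting process. Starting from an arbitrary word $\omega$ representing $g$ with $\mbox{deg}_s(\omega)\le F_s(R)$ for every $s\in\Sigma$, one pushes letters past one another in the prescribed order, picking up iterated commutators at each swap. The essential combinatorial fact is that a basic commutator $c$ built from $\sigma_{i_1},\dots,\sigma_{i_\ell}$ can accumulate at most on the order of $\prod_{k=1}^\ell F_{\sigma_{i_k}}(R)\asymp\mathbf F_c(R)$ occurrences during the collection. This bounds the coordinate $|x_j|$ by $C\,\mathbf F_{c_j}(R)$, where $c_j$ is the commutator whose image is $\sigma_j$; doubling of the $F_s$, together with transfer along the equivalence $\simeq$, then recasts this bound in the form $|x_j|\le CF_{\sigma_j}(R)$ stated in the theorem. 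Multiplying coordinate ranges across $j$ and regrouping by the weight class $\bar{w}_i$ gives $\#\{g:\|g\|_{\mathfrak F}\le R\}\preceq \prod_{i=1}^{j_*}\mathbf F_i(R)^{\mathfrak r_i}$.

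For the matching lower bound, I would construct many distinct elements of small quasi-norm directly. For each commutator $c$ with $F_c\in\bar{w}_i$, written as a word of fixed length over $\Sigma\cup\Sigma^{-1}$, its $m$-th power admits a representative in which every generator $s$ occurring in $c$ appears $O(m)$ times; moreover, using the commutator identities in a nilpotent group, one can realize the $m$-th power of $c$ with a word whose total use of each $s\in\Sigma$ scales like $O(m^{1/w(c)}F_s(R))$ after normalization. Composing such representatives across all basic commutators with exponents $|x_j|\le c'F_{\sigma_j}(R)$ produces a word in which each $s\in\Sigma$ is used at most $O(F_s(R))$ times, so the resulting element satisfies $\|\cdot\|_{\mathfrak F}\le R$. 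Uniqueness of the normal form guarantees that the distinct choices of $(x_1,\dots,x_Q)$ yield distinct elements, giving the lower bound $\#\{g:\|g\|_{\mathfrak F}\le R\}\succeq \prod_i\mathbf F_i(R)^{\mathfrak r_i}$.

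The main obstacle, as in \cite{SCZ-nil}, is controlling the collecting process in the upper bound. A priori, swapping letters produces new higher-depth commutators whose accumulation could overwhelm the count; the resolution relies crucially on the multiplicative rule $F_{[c_1,c_2]}=F_{c_1}F_{c_2}$, which guarantees that every newly created commutator has a strictly larger weight function than its parents, together with nilpotency, which terminates the procedure after finitely many levels ($G^{\mathfrak F}_{j_*+1}=\{e\}$). Induction on the nilpotency class and a careful treatment of torsion at each abelian quotient then close the argument. I would simply invoke the combinatorial core of \cite[Theorems 2.10 and 3.2]{SCZ-nil} rather than rederive this bookkeeping here.
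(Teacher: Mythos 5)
The paper itself gives no proof of Theorem \ref{th-nilgeom}; it is imported verbatim from \cite[Theorems~2.10 and~3.2]{SCZ-nil}, so there is no in-text argument against which to compare you line by line. That said, your outline does reproduce the architecture one finds in \cite{SCZ-nil}: the central filtration $\{G^{\mathfrak F}_j\}$ with $[G,G^{\mathfrak F}_j]\subseteq G^{\mathfrak F}_{j+1}$, a Mal'cev-style normal form built from commutators organized by weight class $\bar w_j$, a weighted Hall collecting process to obtain the upper bound and the product decomposition, the multiplicative rule $F_{[c_1,c_2]}=F_{c_1}F_{c_2}$ together with nilpotency to terminate the collection, and direct constructions of many words to obtain the lower bound. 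You also correctly flag the multiplicative rule as the key control that prevents the collected higher commutators from overwhelming the count.

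There is, however, an error in the scaling you record in your lower-bound step, and it is precisely the place where the nontrivial combinatorics live. You claim that $c^m$ can be realized by a word in which each constituent letter $s$ is used at most $O\bigl(m^{1/w(c)}F_s(R)\bigr)$ times ``after normalization.'' This is off by a factor of $R$: in the relevant regime $m\asymp F_c(R)$, regular variation gives $m^{1/w(c)}\asymp R$, so your count is $\asymp R\,F_s(R)$, which is much too large and would not place the resulting element in the ball of radius $R$. The correct statement — the combinatorial lemma that makes the lower bound work in \cite{SCZ-nil} — is that for $m\le F_c(R)$ one can realize $c^m$ using each constituent letter $s$ at most $O\bigl(F_s(R)\bigr)=O\bigl(F_s(F_c^{-1}(m))\bigr)$ times. (In the power-law case $F_s(t)=t^{w(s)}$, this reads $O\bigl(m^{w(s)/w(c)}\bigr)$, not $O\bigl(m^{1/w(c)}\bigr)$ and certainly not $O\bigl(m^{1/w(c)}F_s(R)\bigr)$.) You explicitly defer the bookkeeping to \cite{SCZ-nil}, which is reasonable for an outline, but the intermediate scaling you write down is not the right one and, taken at face value, would break the lower-bound argument.
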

\begin{defin}\label{def-volnilF}
 Let $G$ be a nilpotent group equipped with a finite tuple of generators $\Sigma$ and a weight function system
$\mathfrak F$ as above satisfying  (\ref{order})-(\ref{f-doubling}). From this data, extract the functions $\mathbf F_j$  and integers $\mathfrak r_j$ , $1\le j\le j_*$, as explained above.  Set $$\mathbf  F_{G,\mathfrak F}=\mathbf F_{\mathfrak F} = \prod_1^{j_*} \mathbf F_i^{\mathfrak r_j}.
$$
\end{defin}
By Theorem \ref{th-nilgeom},  for any nilpotent group and weight function system satisfying (\ref{order})-(\ref{f-doubling}),  we have the explicit volume estimate
$$ \#\{g\in G: \|g\|_{\mathfrak F}\le R\} \asymp \mathbf F_{\mathfrak F}(R).$$

\begin{exa}  Let us return to the Heisenberg group example $G=\mathbb H(3,\mathbb Z)$, Example \ref{exa-Heisenberg},  with $F_{s_i}(t)=t^{\weight_i}$ with
$\weight_i>0$ for all $t\ge 1$ and $1\le i\le 3$. Then
$$\mathbf F_{\mathfrak F}(R)\asymp \left\{\begin{array}{cl}  R^{\weight_1+\weight_2+\weight_3}  &\mbox{ if } \weight_3\ge \weight_1+\weight_2,\\
 R^{2 (\weight_1+\weight_2) }& \mbox{ if }  \weight_3\le \weight_1+\weight_2.
\end{array}\right.$$
\end{exa}

\section{Upper bounds on return probabilities}    \label{sec-up}
\setcounter{equation}{0}

\subsection{A general approach}
In this section we discuss how to obtain upper bounds for the return probability of a measure $\mu\in \mathcal P_ {\preceq} (G,\mbox{reg})$, a subset of $
\mathcal P(G,\mbox{reg})$ which is described below in Definition
 \ref{def-Pregorder}.  The main tool is the following technical result.  For the proof, we can follow the proofs of \cite[Theorems 4.1 and 4.3]{SCZ-nil} with minor adaptations.
\begin{pro} \label{pro-PS1}
Let $G$ be a countable group equipped with symmetric probability measures $\mu_i$, $0\le i\le k$.  Assume that there exists a constant $C>0$
such that for each $R>0$ and $0\le i\le k$, there is a subset $K_i (R)\subset G$ such that
\begin{equation}\label{PSR}
\sum_{x\in G}|f(xh)-f(x)|^2 \le CR \mathcal E_{G,\mu_i}(f,f),\;\; f\in L^2(G),\; h\in K_i(R).
\end{equation}
Assume further that there are an integer $Q$ and a positive monotone function $\mathbf F$ of regular variation of positive index with inverse $\mathbf F^{-1}$
 such that
$$
\# \left(\cup_{i=0}^k K_i(R)\right)^Q\ge  \mathbf F(R),
$$
where $ \left(\cup_{i=0}^k K_i(R)\right)^Q =\{g=g_1\dots g_Q: g_i\in \cup_{i=0}^k K_i(R) \}$ is viewed as a subset of  $G$. Set
$\mu=(k+1)^{-1}\sum\limits_{i=0}^k \mu_i$.
 Then
 $$
   \Lambda_{2,G,\mu}\succeq 1/\mathbf F^{-1}  \, \quad
  \hbox{and} \quad\mu^{(2n)}(e)\preceq 1/\mathbf F(n).$$
\end{pro}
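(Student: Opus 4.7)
I would aim to establish the spectral profile lower bound $\Lambda_{2,G,\mu}\succeq 1/\mathbf F^{-1}$, from which the return probability bound follows by the equivalence recalled in Subsection~\ref{sec-LambdaPhi}. The core strategy is to promote \eqref{PSR} to a uniform $L^2$ inequality along arbitrary elements of the product set $K(R)^Q$ (where $K(R):=\bigcup_{i=0}^k K_i(R)$), and then combine it with an averaging argument over this large set.

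\emph{Step 1 (Poincar\'e along a product).} Since $\mathcal E_{G,\mu_i}\le(k+1)\mathcal E_{G,\mu}$, the hypothesis gives, for every $h\in K(R)$,
\[
\sum_{x\in G}|f(xh)-f(x)|^2\le C(k+1)R\,\mathcal E_{G,\mu}(f,f).
\]
For $h=h_1\cdots h_Q$ with each $h_j\in K(R)$, I would telescope
\[
f(xh)-f(x)=\sum_{j=1}^Q\bigl[f(xh_1\cdots h_j)-f(xh_1\cdots h_{j-1})\bigr],
\]
apply Cauchy--Schwarz on the $Q$ terms, and use translation invariance of the counting measure (the change of variable $u=xh_1\cdots h_{j-1}$ is a bijection of $G$) to obtain
\[
\sum_{x\in G}|f(xh)-f(x)|^2\le Q^2C(k+1)R\,\mathcal E_{G,\mu}(f,f),\qquad h\in K(R)^Q.
\]

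\emph{Step 2 (Spectral profile via averaging).} Note first that since $||a|-|b||\le|a-b|$, one has $\mathcal E_{G,\mu}(|f|,|f|)\le\mathcal E_{G,\mu}(f,f)$ with identical support and $L^2$-norm, so the spectral profile is unchanged if we restrict to $f\ge 0$. Fix such an $f$ supported on $A$ with $\#A\le v$, let $M=K(R)^Q$, and write $f_h(x)=f(xh)$. Expanding $\|f-f_h\|_2^2=2\|f\|_2^2-2\langle f,f_h\rangle$ and summing Step~1 over $h\in M$ yields
\[
2\#M\,\|f\|_2^2-2\sum_{h\in M}\langle f,f_h\rangle\le Q^2C(k+1)R\,\#M\,\mathcal E_{G,\mu}(f,f).
\]
The decisive estimate, using $f\ge 0$, bounds the inner-product sum by $\|f\|_1^2$:
\[
\sum_{h\in M}\langle f,f_h\rangle=\sum_x f(x)\sum_{h\in M}f(xh)\le\sum_x f(x)\|f\|_1=\|f\|_1^2\le v\|f\|_2^2,
\]
where the last step is Cauchy--Schwarz on $\mathrm{supp}(f)$. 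Choosing $R$ minimal with $\mathbf F(R)\ge 2v$ (so that $\#M\ge 2v$ and $\#M-v\ge\#M/2$) and rearranging gives
\[
\frac{\mathcal E_{G,\mu}(f,f)}{\|f\|_2^2}\ge\frac{1}{Q^2C(k+1)\mathbf F^{-1}(2v)},
\]
and regular variation of $\mathbf F^{-1}$ absorbs the factor of $2$, delivering $\Lambda_{2,G,\mu}\succeq 1/\mathbf F^{-1}$. The bound on $\mu^{(2n)}(e)$ is then immediate from the spectral-profile/return-probability equivalence cited in Subsection~\ref{sec-LambdaPhi}.

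\emph{Main obstacle.} The arithmetic is routine once the right inequality is isolated. The subtle point to watch is to avoid the crude estimate $\sum_{h\in M}\langle f,f_h\rangle\le\#(A^{-1}A\cap M)\|f\|_2^2\le v^2\|f\|_2^2$, which would only produce $\Lambda\succeq 1/\mathbf F^{-1}(v^2)$; since $\mathbf F^{-1}$ is regularly varying of positive index, this is strictly weaker than the claim. The reduction to $f\ge 0$ and the passage from a support-based bound to an $\|f\|_1$-bound save exactly one factor of $v$ and yield the correct exponent.
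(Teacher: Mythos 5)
The paper does not give its own proof of this proposition; it simply cites the proofs of Theorems 4.1 and 4.3 of \cite{SCZ-nil} and says ``with minor adaptations.'' Your argument supplies a complete and correct direct proof, and it is in fact the standard pseudo-Poincar\'e-plus-averaging derivation that those cited theorems implement, so you are effectively reconstructing the outsourced argument rather than taking a different route. Step 1 (telescoping the product, Cauchy--Schwarz on the $Q$ terms, and translation invariance to reduce to single factors) and Step 2 (the reduction to $f\ge0$, the identity $\|f-f_h\|_2^2=2\|f\|_2^2-2\langle f,f_h\rangle$, the crucial $\ell^1$ bound $\sum_{h\in M}\langle f,f_h\rangle\le\|f\|_1^2\le v\|f\|_2^2$, and the choice $\mathbf F(R)\ge 2v$) are exactly the expected ingredients, and the final absorption of the factor $2$ by regular variation of $\mathbf F^{-1}$ is correct.

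One small remark on a stylistic variant you may encounter when comparing with \cite{SCZ-nil}: instead of passing to $|f|$ one can skip the sign reduction by working with the averaged translate $\bar f_M=\frac{1}{\#M}\sum_{h\in M}f_h$ and splitting $\|f\|_2^2=\langle f,f-\bar f_M\rangle+\langle f,\bar f_M\rangle\le\|f\|_2\cdot\frac{1}{\#M}\sum_{h}\|f-f_h\|_2+\|f\|_1\|\bar f_M\|_\infty$, then bounding $\|\bar f_M\|_\infty\le\|f\|_1/\#M$ by injectivity of $h\mapsto xh$. This yields the same inequality $\|f\|_2^2\lesssim R\,\mathcal E_{G,\mu}(f,f)$ once $\#M\gtrsim v$, and is equivalent in content to your Step~2; the only difference is whether the ``one factor of $v$ saved'' comes from $\|f\|_1^2\le v\|f\|_2^2$ applied after an $\ell^1$-type bound (your route) or from the $\|\cdot\|_\infty$--$\|\cdot\|_1$ duality pairing. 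Your observation that the crude bound $\#(A^{-1}A\cap M)\le v^2$ loses an exponent is exactly the right thing to flag; that is the pitfall the averaging trick is designed to avoid.
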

To understand how this proposition works in practice, note that the larger the set $K_i(R)$ is, the harder it is to prove (\ref{PSR}), but the  faster  the growth from the lower-bound $F(R)$
on  $\# \left(\cup_{i=0}^k K_i(R)\right)^Q$ one might expect. Observe also that the inequality (\ref{PSR}) is trivial if $K_i(R)=\{e\}$ but, then, $K_i(R)$ does not contribute at all to the growth of $\# \left(\cup_{i=0}^k K_i(R)\right)^Q$. If $K_i(R)=\{e\}$ for all $i=0,\dots,k$, then $F(R)$ cannot grow and the conclusion of the proposition is trivial. Formally, this case is excluded by the requirement that $F$ is regularly varying of positive index.

The next two propositions provide a way to verify assumption (\ref{PSR}) for the type of measures of interest to us here.
\begin{pro} \label{pro-PS2}
Let $G$ be a countable group equipped with a symmetric probability measure $\mu$ supported on a subgroup $H$ equipped with a quasi-norm $\|\cdot\|$
such that there exist a constant $d>0$ and a positive monotone regularly varying function $\phi: (0,\infty)\ra (0,\infty)$ of positive index at infinity such that
$$\#\{h\in H:  \|h\|\le r\}\asymp r^d,\;\;\;\mu(h) \asymp  \left[\phi(1+\|h\|)  (1+\|h\|)^d \right]^{-1}.$$
Then there is a constant $C$ such that, for all $f\in L^2(G)$, $R\ge 1$ and $ h\in H$ with $\|h\|\le R$, we have
\[
\sum_{x\in G}|f(xh)-f(x)|^2 \le C\phi(R) \mathcal E_{G,\mu}(f,f).
\]
\end{pro}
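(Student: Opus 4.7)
Since $\mu$ is supported on the subgroup $H$, the random walk driven by $\mu$ preserves right cosets of $H$ in $G$. Fixing a transversal $T$ for $H\backslash G$, for each $t\in T$ set $g_t(k):=f(tk)$ for $k\in H$; then both sides of the desired inequality decompose additively over $t$:
\[
\sum_{x\in G}|f(xh)-f(x)|^2 \,=\, \sum_{t\in T}\sum_{k\in H}|g_t(kh)-g_t(k)|^2,\qquad
\mathcal E_{G,\mu}(f,f) \,=\, \sum_{t\in T}\mathcal E_{H,\mu}(g_t,g_t).
\]
It therefore suffices to prove the analogous inequality on $H$: for all $g\in L^2(H)$, $R\ge 1$ and $h\in H$ with $\|h\|\le R$,
\[
\sum_{k\in H}|g(kh)-g(k)|^2 \,\le\, C\phi(R)\,\mathcal E_{H,\mu}(g,g).
\]

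The main ingredient is a local Poincar\'e inequality on balls of $H$. Let $B(R)=\{h\in H:\|h\|\le R\}$ and let $B=k_0B(R)$ be any translate. The volume hypothesis gives $|B|\asymp R^d$, and for any $k_1,k_2\in B$ the quasi-triangle inequality yields $\|k_1^{-1}k_2\|\le 2AR$; combined with the regular variation of $\phi$ of positive index this gives the uniform lower bound
\[
\mu(k_1^{-1}k_2)\,\ge\, \frac{c}{\phi(R)\,R^d}\qquad (k_1,k_2\in B).
\]
Substituting this into the restricted Dirichlet form
$\mathcal E_B(g,g):=\tfrac12\sum_{k_1,k_2\in B}|g(k_1)-g(k_2)|^2\mu(k_1^{-1}k_2)$
and using the identity $\sum_{k_1,k_2\in B}|g(k_1)-g(k_2)|^2 = 2|B|\sum_{k\in B}|g(k)-\bar g_B|^2$ (with $\bar g_B$ the average of $g$ over $B$) yields the local Poincar\'e inequality
\[
\sum_{k\in B}|g(k)-\bar g_B|^2 \,\le\, C\phi(R)\,\mathcal E_B(g,g).
\]

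To pass from a single ball to all of $H$, take a maximal $R$-separated net $\{k_i\}$ in $H$. The balls $B_i:=k_iB(R)$ then cover $H$, and for $C_1$ large enough (depending only on the quasi-norm constant $A$) the enlargements $\tilde B_i:=k_iB(C_1R)$ satisfy $k,kh\in\tilde B_i$ whenever $k\in B_i$ and $\|h\|\le R$, and they have bounded overlap (multiplicity $\le M$) by volume doubling. Applying the triangle inequality $|g(kh)-g(k)|^2\le 2|g(kh)-\bar g_{\tilde B_i}|^2+2|g(k)-\bar g_{\tilde B_i}|^2$ together with the local Poincar\'e inequality on $\tilde B_i$ (and absorbing the constant $\phi(C_1R)\asymp\phi(R)$) gives
\[
\sum_{k\in B_i}|g(kh)-g(k)|^2\,\le\, C'\phi(R)\,\mathcal E_{\tilde B_i}(g,g),
\]
and summing over $i$ while using $\sum_i\mathcal E_{\tilde B_i}(g,g)\le M\,\mathcal E_{H,\mu}(g,g)$ proves the inequality on $H$, hence on $G$ by the first paragraph. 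The main technical step is the local Poincar\'e inequality: the improvement by a factor $R^d$ over the trivial bound $\sum_k|g(kh)-g(k)|^2\le 2\mathcal E/\mu(h)\asymp \phi(R)R^d\mathcal E$ comes precisely from exploiting the contributions of $g$ across all $\asymp R^d$ points of a ball of radius $R$, rather than only across the single pair $(k,kh)$.
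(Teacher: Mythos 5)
Your proof is correct, but it follows a genuinely different route from the paper's. The paper's argument is a short ``tail mass'' computation: fix $h$ with $\|h\|\le R$, observe that $\sum_{\|h'\|\ge C_0\|h\|}\mu(h')\asymp 1/\phi(\|h\|)\ge c/\phi(R)$ (by summing over dyadic annuli), and then bound
$\sum_x |f(xh)-f(x)|^2\cdot\big(\sum_{\|h'\|\ge C_0\|h\|}\mu(h')\big)$
by inserting $f(xh')$ via $|f(xh)-f(x)|^2\le 2|f(xh)-f(xh')|^2+2|f(xh')-f(x)|^2$, changing variables, and using $\mu(h')\le C\,\mu(h^{-1}h')$ for $\|h'\|\ge C_0\|h\|$ to land both pieces inside $\mathcal E_{G,\mu}(f,f)$. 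No cosets, no local Poincar\'e, no covering. Your argument instead proves a local Poincar\'e inequality on balls in $H$ (using the uniform lower bound on $\mu(k_1^{-1}k_2)$ for $k_1,k_2$ in a ball of radius $R$, together with $|B|\asymp R^d$), and then patches this together with a maximal $R$-separated net and a bounded-overlap estimate. That route works: the coset reduction is valid (note it is a transversal for $G/H$, not $H\backslash G$, since you write $x=tk$ with $k\in H$), the identity $\sum_{k_1,k_2\in B}|g(k_1)-g(k_2)|^2 = 2|B|\sum_k|g(k)-\bar g_B|^2$ is correct, and the doubling/bounded-overlap lemma in the quasi-metric follows from $\#B(r)\asymp r^d$ together with symmetry of the quasi-norm (which is available here, since $\mu$ is symmetric and $\mu(h)$ is comparable to a strictly decreasing function of $\|h\|$). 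What your approach buys is a genuine local Poincar\'e inequality and a more geometric picture; what it costs is the extra machinery of nets and overlap, plus several uses of the quasi-triangle constant and of symmetry that the paper's argument uses only once and very lightly. The paper's tail-mass trick is shorter and sidesteps covering entirely, which is why it is the preferred route here.
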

\begin{proof}  First observe that
\begin{equation}\label{eq:feobaqz}
\sum_{\|h\|\ge r}\mu(h) \asymp 1/\phi(r).
\end{equation}
This is proved by summing over $A$-adic  annuli with $A$ large enough so that
$$\#\{A^{n-1}\le \|h\|\le A^{n}\}\asymp \#\{ \|h\|\le A^n\} \asymp A^{dn}.$$
Next,  note that for $h,h'\in H$, the inequality $\|h'\|\ge
C_0\|h\|$
 (with $C_0\in(0,1/A)$ small enough, where $A$ is the constant in the definition of quasi-norm, see Definition \ref{def-quasi})
  implies $$\|h^{-1}h'\|\ge
C^{-1}\|h'\|\ge C^{-1}C_0\|h\| \mbox{ and } \mu(h')\le C \mu(h^{-1}h')
$$ for some constant $C\ge 1$.
 Now, write
\begin{align*}
\lefteqn{ \sum_{x\in G}|f(xh)-f(x)|^2  \left(\sum_{|h'|\ge
C_0|h|} \mu(h')\right)  } \hspace{.5in}&  &\\
&\le 2\sum_{|h'|\ge
C_0|h|}\sum_{x\in G}( |f(xh)-f(xh')|^2 + |f(xh')-f(x)|^2)\mu(h')\\
&\le  C \sum_{h'\in H: |h'|\ge
C_0|h|}\sum_{x\in G}|f(x)-f(xh^{-1}h')|^2\mu(h^{-1}h')\\
&\quad+
2\sum_{h'\in H,x\in G}|f(xh')-f(x)|^2\mu(h') \\
&\le  C \sum_{h'\in H: |h^{-1}h'|\ge
C^{-1}|h'|}\sum_{x\in G}|f(x)-f(xh^{-1}h')|^2\mu(h^{-1}h')\\
&\quad+
2\sum_{h'\in H,x\in G}|f(xh')-f(x)|^2\mu(h') \\
&\le   (2+C) \mathcal E_{G,\mu}(f,f).
\end{align*}
This gives the desired inequality.
\end{proof}

\begin{pro} \label{pro-PS3}
Let $G$ be a countable group equipped with symmetric probability measure $\mu$ supported on a subgroup $H$ equipped with a finite generating set and its word length $|\cdot|$.  Assume that there  exist a constant $d>0$ and a positive monotone regularly varying function $\phi: (0,\infty)\ra (0,\infty)$  of positive index at infinity such that
$$\#\{h\in H:  |h|\le r\}\asymp r^d,\;\;\;\mu(h) \asymp  \left[\phi(1+|h|)  (1+|h|)^d \right]^{-1}.$$
Then there is a constant $C$ such that, for all $f\in L^2(G)$, $R\ge 1$ and $ h\in H$ with $\|h\|\le R$, we have
\[
\sum_{x\in G}|f(xh)-f(x)|^2 \le C\Phi(R) \mathcal E_{G,\mu}(f,f),
\]
where $\Phi(t)= t^{2}/\int_0^t \frac{sds}{\phi(s)}$, $t\ge 1$.
\end{pro}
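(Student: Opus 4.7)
The plan is to decompose the long-range jump $h$ into a chain of shorter jumps, apply Proposition~\ref{pro-PS2} to each shorter jump, and then optimize the intermediate scale so as to recover the function $\Phi$.

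Fix a scale parameter $r \in [1,R]$ to be chosen at the end of the argument. Since $H$ is generated by its word-length unit ball, any $h \in H$ with $|h|\le R$ admits a decomposition $h = h_1 h_2 \cdots h_N$ with each $h_i \in H$ satisfying $|h_i|\le r$ and $N \le \lceil R/r\rceil$. A Cauchy--Schwarz step followed by the shift invariance of counting measure on $G$ yields
\[
\sum_{x\in G} |f(xh) - f(x)|^2 \le N\sum_{i=1}^N \sum_{x\in G} |f(xh_i)-f(x)|^2.
\]
For each inner sum the word length $|\cdot|$ is a norm on $H$ (hence a quasi-norm), and the volume and tail hypotheses on $\mu$ and $H$ are precisely those of Proposition~\ref{pro-PS2}. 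Applying that proposition at scale $r$ gives $\sum_x |f(xh_i)-f(x)|^2 \le C\phi(r)\,\mathcal E_{G,\mu}(f,f)$, and combining gives
\[
\sum_{x\in G}|f(xh)-f(x)|^2 \le C(R/r+1)^2\phi(r)\,\mathcal E_{G,\mu}(f,f).
\]

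The remaining and most delicate step is to choose $r$ so that the prefactor $(R/r+1)^2\phi(r)$ is controlled by $\Phi(R) = R^2/\int_0^R s\,ds/\phi(s)$. Since $\phi$ is regularly varying of positive index, Karamata's theorem (see \cite{BGT}) gives $\int_0^R s\,ds/\phi(s) \asymp R^2/\phi(R)$ when the index is smaller than $2$, so that $\Phi(R)\asymp \phi(R)$ and the choice $r=R$ works; and $\int_0^R s\,ds/\phi(s)$ is comparable to a positive constant when the index exceeds $2$, so that $\Phi(R)\asymp R^2$ and the choice $r=1$ works. The principal obstacle I anticipate is the borderline behavior (either at the critical index $2$ or in the presence of a nontrivial slowly-varying factor in $\phi$), where the naive single-scale optimization threatens to lose a logarithmic factor. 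Overcoming this requires a multi-scale refinement: one decomposes $h$ simultaneously at several dyadic scales $r_k=2^k$ and applies a weighted Cauchy--Schwarz inequality, with weights calibrated so that the resulting discrete sum $\sum_k r_k^2/\phi(r_k)$ reproduces the integral $\int_0^R s\,ds/\phi(s)$ up to multiplicative constants, again by appeal to Karamata's theorem.
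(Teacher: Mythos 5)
Your first three steps are sound: the chain decomposition of $h$ into $N\le\lceil R/r\rceil$ pieces of length $\le r$, the telescoping/Cauchy--Schwarz step, and the application of Proposition~\ref{pro-PS2} to each piece do give $\sum_x|f(xh)-f(x)|^2\le C(R/r)^2\phi(r)\,\mathcal E_{G,\mu}(f,f)$. Your Karamata analysis of the non-critical indices is also right. But there is a genuine gap at the end, and the multi-scale fix you propose cannot close it.

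The problem is structural, not a matter of tuning weights. Once you pass through Proposition~\ref{pro-PS2}, each per-scale estimate has the form
\[
\sum_{x\in G}|f(xh)-f(x)|^2 \;\le\; B_n\,\mathcal E_{G,\mu}(f,f),
\qquad B_n:=C\Big(\tfrac{R}{2^n}\Big)^2\phi(2^n),
\]
where the right-hand side involves the \emph{full} Dirichlet form $\mathcal E_{G,\mu}(f,f)$. Any weighted average of one-sided inequalities $A\le B_n\,\mathcal E$ (with the same $\mathcal E$ on the right) can never beat $A\le(\min_n B_n)\,\mathcal E$: averaging upper bounds only gives back the best single upper bound. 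So your ``weighted Cauchy--Schwarz over dyadic scales'' reproduces exactly the single-scale optimization $\min_{r\le R}(R/r)^2\phi(r)$. In the critical case — e.g.\ $\phi(t)\asymp t^2$ for $t\ge 1$ with $\phi(0+)>0$ — this minimum is $\asymp R^2$, while $\Phi(R)=R^2/\int_0^R s\,ds/\phi(s)\asymp R^2/\log R$, so the target inequality is missed by a logarithm; the loss gets worse if $\phi(t)\asymp t^2\ell(t)$ with $\ell$ slowly varying and increasing.

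What the paper does differently (and what makes the multi-scale argument genuinely \emph{additive}) is to compare, at each scale $s$, not against $\mathcal E_{G,\mu}(f,f)$ but against the scale-$s$ Dirichlet form $\mathcal E_{G,u_s}(f,f)$, where $u_s$ is the uniform probability measure on $\{h\in H:|h|\le s\}$. Your chain decomposition and Cauchy--Schwarz argument, applied without going through Proposition~\ref{pro-PS2}, yield (this is \cite[Proposition~A.4]{SCZ-aop})
\[
\sum_{x\in G}|f(xh)-f(x)|^2 \;\le\; C\,(R/s)^2\,\mathcal E_{G,u_s}(f,f)
\qquad\text{for }|h|\le R,\ s\le R.
\]
Independently, the measure itself decomposes linearly over scales: $\mu\asymp\sum_{n\ge 0}\frac{1}{\phi(2^n)}\,u_{2^n}$, hence $\mathcal E_{G,\mu}(f,f)\gtrsim\sum_n \frac{1}{\phi(2^n)}\mathcal E_{G,u_{2^n}}(f,f)$. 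Now multiplying the displayed inequality (with $s=2^n$) by $2^{2n}/\phi(2^n)$ and summing over $n$ with $2^n\le R$ gives
\[
\Big(\sum_{n:\,2^n\le R}\frac{2^{2n}}{\phi(2^n)}\Big)\sum_{x\in G}|f(xh)-f(x)|^2
\;\le\; CR^2 \sum_n\frac{\mathcal E_{G,u_{2^n}}(f,f)}{\phi(2^n)}
\;\le\; C' R^2\,\mathcal E_{G,\mu}(f,f),
\]
and $\sum_{n:2^n\le R}2^{2n}/\phi(2^n)\simeq\int_0^R s\,ds/\phi(s)$ yields exactly $\Phi(R)$. The decisive extra structure is that each scale's Dirichlet energy contributes \emph{separately} to $\mathcal E_{G,\mu}$; Proposition~\ref{pro-PS2} collapses this structure, which is why your route cannot recover the sharp constant.
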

\begin{proof}  Let $u_r $ be the uniform probability measure on $\{h\in H: |h|\le r\}$.
Follow the proof of \cite[Proposition A.4]{SCZ-aop} to show that for any $f\in L^2(G)$ and any $0<s<r<\infty$ and $h\in G$ with $|h|\le r$, we have
$$\sum_{x\in G}|f(xh)-f(x)|^2\le C (r/s)^2\sum_{x\in G}\sum _{h\in H}|f(xh)-f(x)|^2u_s(h).$$
Observe that $\mu\asymp \sum_0^\infty \frac{1}{\phi(2^n)} u_{2^n}$
and that, for $r\ge 1$,
$$\sum_{n: 2^n\le r} \frac{2^{2n}}{\phi (2^n) } \asymp \int_0^r \frac{sds}{\phi(s)}.$$
The desired inequality follows.
\end{proof}

\begin{rem}  If  $\phi$ is regularly varying of positive index $\gamma$, then we always have
that   $\Phi (t)\le C\phi(t)$ and $\phi(t) \asymp \Phi(t)$ if
$\gamma \in (0,2)$.  If $\gamma>2$, $\Phi(t)\asymp t^2$ which is
much less than $\phi$ on $(1,\infty)$. \end{rem}

\begin{rem}
The proof of Proposition
\ref{pro-PS3} outlined above uses  the fact that,  roughly
speaking,  an element $h$ with $|h|=r$ can be written as a product
of $(r/s)$ elements of length  at most $s$.  This property is not
necessarily true for an arbitrary quasi-norm $\|\cdot\|$ as in
Proposition \ref{pro-PS2}.
\end{rem}

\begin{defin} \label{def-PregPhi}
Given $\mu=\sum_0^k \mu_i \in  \mathcal P (G,\mbox{reg})$ with  $\mu_i$ defined in terms of a regularly varying function $\phi_i$
as in  (\ref{def-mui}), $1\le i\le k$,  set
$$\Phi_0(t)= \max\{t,t^2\}$$
and, for $1\le i\le k$,
 $$\Phi_i(t)= \frac{t^2}{\int_0^t \frac{2s}{\phi_i(s)} ds}\;\; \mbox{ on } \;[1,\infty),$$
 with $\Phi_i$ extended linearly on $[0,1] $ with $\Phi_i(0)=0$.
\end{defin}

\begin{rem} \label{rem-Phi}
The exact definition of $\Phi_i$ in the interval $[0,1]$ is not very important to us. For convenience, we prefer  to have it vanish linearly at $0$.
Because $\phi_i$ is increasing,  one can check that $$ \Phi_i\le \phi_i  \mbox{ on } [1,\infty) \; \mbox{ and }\Phi_i \mbox{ is increasing on } (0,\infty).$$
 Further, $\Phi_i$
is regularly varying at infinity of index  in $ (0, 2]$. More precisely,  $\Phi_i \asymp \phi_i$ when the index of $\phi_i$ is in $(0,2)$. When the index of $\phi$ is at least $2$, then  $\Phi(t)\asymp t^2/\ell(t)$ where $\ell$  is increasing and slowly varying at infinity (i.e., index $0$).  In particular, in all cases, $\Phi_i(t) \preceq \max\{t, t^2\}=\Phi_0(t)$, $1\le i\le k$.
\end{rem}

\begin{pro} \label{pro-PS4}
Let $G$ be a countable group. Let $\mu \in \mathcal P(G,\mbox{\em
reg})$ and, referring to
 {Definitions $\ref{def-Preg}$ and $\ref{def-PregPhi}$}, set
$$ K_i(r)=\{h\in H_i:  \Phi_i(|h|_i) \le r\},\;\; 0\le i\le k$$ and, for some fixed integer $Q$,
$$K(r)=\left\{g\in G: \;g=g_1\dots g_Q,\;\; g_i\in \cup_0^k K_j(r)\right\} .$$
Assume that $\mathbf F$ is a positive monotone regularly varying function of positive  index at infinity with the property that
$$ \forall\, r\ge 1,\;\;\#K(r)\ge \mathbf  F(r) .$$
Let $\mathbf F^{-1}$ be the inverse function of $\mathbf F$. Then
$$\Lambda_{2,G,\mu} \succeq 1/\mathbf F^{-1}\;\mbox{ and }  \mu^{(n)}(e)\preceq 1/\mathbf F(n),\;\; n=1,2,\dots .$$
\end{pro}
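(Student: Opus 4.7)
The plan is to recognize Proposition \ref{pro-PS4} as essentially a packaging of Proposition \ref{pro-PS1} with the Poincaré-type inequalities supplied by Propositions \ref{pro-PS2}/\ref{pro-PS3} for the individual components $\mu_i$. The functions $\Phi_i$ in Definition \ref{def-PregPhi} are designed precisely so that the conclusion of Proposition \ref{pro-PS3} translates into the hypothesis (\ref{PSR}) of Proposition \ref{pro-PS1} with the given $K_i(R)$.

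First, I would verify (\ref{PSR}) for each $\mu_i$, $1\le i\le k$. By Definition \ref{def-Preg}, $\mu_i$ is symmetric, supported on $H_i$, with $\mu_i(h)\asymp [\phi_i(1+|h|_i)(1+|h|_i)^{d_i}]^{-1}$ and $\#\{h\in H_i: |h|_i\le r\}\asymp r^{d_i}$, so Proposition \ref{pro-PS3} applies and gives, for $h\in H_i$ with $|h|_i\le R$,
$$
\sum_{x\in G}|f(xh)-f(x)|^2 \le C\,\Phi_i(R)\,\mathcal E_{G,\mu_i}(f,f).
$$
Changing variable $r=\Phi_i(R)$ and using that $\Phi_i$ is increasing (Remark \ref{rem-Phi}), this reads: for every $h\in K_i(r)=\{h\in H_i:\Phi_i(|h|_i)\le r\}$,
$$
\sum_{x\in G}|f(xh)-f(x)|^2 \le C r\,\mathcal E_{G,\mu_i}(f,f),
$$
which is exactly condition (\ref{PSR}). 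For the finitely supported component $\mu_0$, a standard Poincaré inequality (using the atom of $\mu$ at $e$ and the fact that $\{g:\mu(g)>\eps\}$ generates $G$ from Definition \ref{def-Preg}, which may require absorbing some generating mass from $\mu$ into the $\mu_0$-side of the estimate) gives $\sum_x|f(xh)-f(x)|^2\le C|h|^2 \mathcal E_{G,\mu}(f,f)$. Since $\Phi_0(t)=\max\{t,t^2\}$, the condition $\Phi_0(|h|)\le r$ forces $|h|^2\le r$ (the case $|h|\le 1$ being trivial), yielding (\ref{PSR}) for $K_0(r)$.

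Next, using $p_i>0$ and $\mu\ge p_i\mu_i$, we have $\mathcal E_{G,\mu}\ge p_i \mathcal E_{G,\mu_i}$, so after adjusting the constant in (\ref{PSR}) by a factor $1/\min_i p_i$, the hypothesis of Proposition \ref{pro-PS1} holds for $\mu$ with the given sets $K_i(R)$. With the standing assumption $\#K(R)\ge \mathbf F(R)$ on the $Q$-fold product, Proposition \ref{pro-PS1} immediately yields
$$
\Lambda_{2,G,\mu}\succeq 1/\mathbf F^{-1} \quad\text{and}\quad \mu^{(2n)}(e)\preceq 1/\mathbf F(n).
$$
Finally, since $\mu(e)>0$ and $\mu$ is symmetric (third bullet of Definition \ref{def-Preg}), $\mu^{(n+1)}(e)\ge \mu(e)\,\mu^{(n)}(e)$ combined with the monotonicity of $n\mapsto \mu^{(2n)}(e)$ gives $\mu^{(n)}(e)\simeq \mu^{(2n)}(e)$, so the upper bound extends to all $n=1,2,\dots$.

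I do not expect a significant technical obstacle: the proposition is a clean assembly of already-proved ingredients. The only delicate point is the treatment of $\mu_0$ when its support alone fails to generate $G$; this is resolved by invoking the generating hypothesis on the full measure $\mu$ and comparing Dirichlet forms, rather than working with $\mathcal E_{G,\mu_0}$ in isolation.
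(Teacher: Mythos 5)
Your proposal is correct and follows the same route the paper takes: the paper's entire proof of Proposition \ref{pro-PS4} is the one line ``This follows immediately from Propositions \ref{pro-PS1}--\ref{pro-PS3},'' and you have simply unpacked it. Your observation that the $\mu_0$-component may not by itself generate $G$, so one should run the $\Phi_0$-pseudo-Poincar\'e inequality against $\mathcal E_{G,\mu}$ (via the finite generating set $\{g:\mu(g)>\eps\}$) and only then compare Dirichlet forms, is a genuine subtlety the paper glosses over, and your resolution is the right one.
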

\begin{proof} This follows immediately from Propositions \ref{pro-PS1}--\ref{pro-PS3}.
\end{proof}

Let us now defined  the subset  $ \mathcal P_ {\preceq} (G,\mbox{reg})$ of $
\mathcal P(G,\mbox{reg})$ which is relevant to us because of condition (\ref{order}) and Definition \ref{def-quasinorm} (it requires that the functions $F_s$, $s\in \Sigma$, which are used to define a quasi-norm, be ordered modulo $\simeq$).
 \begin{defin}[{$\mathcal P_{\preceq} (G,\mbox{reg})$}]
 \label{def-Pregorder}  A measure $\mu=\sum_0^kp_i \mu_i$ in $\mathcal P(G,\mbox{reg})$ with associated regularly varying functions
 $\phi_i$, $1\le i\le k$,  is in
$\mathcal P_{\preceq} (G,\mbox{reg})$ if,  for each pair $i,j$ of distinct indices in $\{1,\dots,k\}$, either $\Phi_i\preceq \Phi_j$ or $\Phi_j\preceq \Phi_i$ in a neighborhood of infinity.
\end{defin}

\subsection{Application to nilpotent groups} \label{sec-nil}

The following is a corollary to Proposition \ref{pro-PS4} and Theorem \ref{th-nilgeom} (i.e., \cite[Theorems 2.10 and 3.2]{SCZ-nil}).
\begin{theo} \label{th-nil2}
Assume that $G$ is nilpotent.  Let $\mu \in \mathcal P_{\preceq}(G,\mbox{\em reg})$.
Let  $S_0$ be the support of $\mu_0$ and $S_i$ be a symmetric generating set for the subgroup $H_i$ for $1\le i\le k$.
Let
$$\Sigma =(s_1,\dots,s_m)$$
be a tuple of distinct representatives of the set $(\cup_0^kS_i)\setminus \{e\}$ under the equivalence relation $s^{-1}\sim s$, and set
$$\mathfrak F=\left\{
F_\sigma=\max\{ \Phi^{-1}_i:  i\in \{j: \sigma\in S_j\}\}: \sigma \in \Sigma\right\}.
$$
 Let $\mathbf F=\mathbf F_{G,\mathfrak F}$ be as in {\em Definition \ref{def-volnilF}}.
 Then, we have
$$\Lambda_{2,G,\mu}(v)\succeq 1/\mathbf F (v), \;\;v\ge 1,\; \mbox{ and }  \;\mu^{(n)}(e)\preceq 1/ \mathbf F(n),\;\; n=1,2\dots.$$
\end{theo}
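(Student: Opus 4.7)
The plan is to deduce Theorem \ref{th-nil2} by verifying the hypotheses of Proposition \ref{pro-PS4} using the nilpotent geometry encoded in Theorem \ref{th-nilgeom}. With $K_i(r)=\{h\in H_i:\Phi_i(|h|_i)\le r\}$ and $K(r)=(\cup_{i=0}^k K_i(r))^Q$ as in that proposition, it suffices to establish the volume lower bound $\#K(r)\succeq \mathbf F(r)$, since Proposition \ref{pro-PS4} then immediately yields the claimed spectral profile and return probability estimates. The bridge between $K(r)$ and the quasi-norm ball is the observation that, for a suitable constant $c$,
\[
\{g\in G:\|g\|_{\mathfrak F}\le R\}\subseteq K(cR),
\]
because the coordinate decomposition supplied by Theorem \ref{th-nilgeom} writes each such $g$ as a bounded product of powers of the $\sigma_j\in\Sigma$ whose sizes match exactly the $K_i$ scales.

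First, I would check that the data $(\Sigma,\mathfrak F)$ satisfies the hypotheses of Theorem \ref{th-nilgeom}: the assumption $\mu\in\mathcal P_{\preceq}(G,\mathrm{reg})$ (Definition \ref{def-Pregorder}) forces the $\Phi_i$, hence the $\Phi_i^{-1}$, hence the $F_\sigma=\max\{\Phi_i^{-1}:\sigma\in S_i\}$, to be pairwise comparable under $\preceq$ near infinity, giving (\ref{order}); each $F_\sigma$ is regularly varying of positive index by Remark \ref{rem-Phi}; and after replacing each $F_\sigma$ by an equivalent smooth/doubling version via Proposition \ref{pro-variation}, the full set of conditions (\ref{order})--(\ref{f-doubling}) needed in Theorem \ref{th-nilgeom} is met. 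Theorem \ref{th-nilgeom} then delivers both the volume estimate $\#\{g:\|g\|_{\mathfrak F}\le R\}\asymp\mathbf F(R)$ and the coordinate representation $g=\prod_{j=1}^Q\sigma_j^{x_j}$ with $|x_j|\le CF_{\sigma_j}(R)$.

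Next, for each $\sigma_j$ appearing in the decomposition, choose an index $i=i(j)$ with $\sigma_j\in S_i$ realizing the max defining $F_{\sigma_j}$; then $\sigma_j^{x_j}\in H_i$, and
\[
|\sigma_j^{x_j}|_i\le |x_j|\le C F_{\sigma_j}(R)=C\Phi_i^{-1}(R),
\]
so by doubling/regular variation of $\Phi_i$,
\[
\Phi_i(|\sigma_j^{x_j}|_i)\le \Phi_i\bigl(C\Phi_i^{-1}(R)\bigr)\le C'R,
\]
which means $\sigma_j^{x_j}\in K_i(C'R)$ and therefore $g\in K(C'R)$. Combined with the volume estimate, this yields $\#K(C'R)\ge\#\{g:\|g\|_{\mathfrak F}\le R\}\asymp\mathbf F(R)$, and regular variation of $\mathbf F$ converts this into $\#K(r)\succeq\mathbf F(r)$, as required by Proposition \ref{pro-PS4}.

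The step I expect to be the main obstacle is the bookkeeping of the various equivalence relations ($\asymp$, $\simeq$, $\preceq$) across the regularly varying functions $\phi_i$, $\Phi_i$, $F_\sigma$, and $\mathbf F$: one must check that the total ordering of the $\Phi_i$ assumed in $\mathcal P_{\preceq}(G,\mathrm{reg})$ really does pass through the $\max$-construction of $F_\sigma$ to produce a weight system satisfying (\ref{order})--(\ref{f-doubling}), and that the implications $|x_j|\le CF_{\sigma_j}(R)\Rightarrow\Phi_i(|x_j|)\le C'R$ can be made uniform over the finite set of admissible indices $i(j)$. Once these regularity matters are handled cleanly, the rest of the argument is a direct combination of Theorem \ref{th-nilgeom} with Proposition \ref{pro-PS4}.
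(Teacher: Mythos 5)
Your proposal is correct and follows essentially the same route as the paper, which gives this argument in a single terse sentence ("Theorem \ref{th-nilgeom} shows that the hypothesis of Proposition \ref{pro-PS4} are satisfied with $\mathbf F=\mathbf F_{G,\mathfrak F}$"). You have simply made the details explicit — verifying (\ref{order})--(\ref{f-doubling}) via $\mathcal P_{\preceq}$, using the coordinate decomposition from Theorem \ref{th-nilgeom} to show $\{\|g\|_{\mathfrak F}\le R\}\subseteq K(cR)$, and feeding the resulting volume lower bound into Proposition \ref{pro-PS4} — and all of these steps are sound.
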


\begin{proof} Theorem \ref{th-nilgeom} (i.e., \cite[Theorems 2.10 and 3.2]{SCZ-nil})
 shows  that the hypothesis of Proposition \ref{pro-PS4} are satisfied with $\mathbf F=\mathbf F_{G,\mathfrak F}$, which is given in Definition \ref{def-volnilF}.  The assumption that $\mu$ belongs to $\mathcal P_{\preceq}(G,\mbox{reg})$
(instead of $\mathcal P(G,\mbox{reg})$) insures that property (\ref{order}) is satisfied by the functions $F_s$, $s\in \Sigma$.\end{proof}
\begin{rem} In this theorem, whether we choose the set $S_0$ to be an arbitrary finite symmetric generating set of $G$ or the support of $\mu_0$ (as we did in the above statement) makes no difference. The reason is that $\Phi_i\preceq \Phi_0$ for each $1\le i\le k$.
\end{rem}

\begin{rem} In the case each $H_i$ is a discrete one parameter subgroup of $G$, Theorem \ref{th-nil2} is already contained in \cite{SCZ-nil}.  The case when
$k=1$, $p_0=0$, and $H_1=G$ is also known.  The theorem provides a natural extension covering these two special cases.
\end{rem}
\begin{rem} Let us emphasize here the fact that the choice of the geometry adapted to $\mu \in \mathcal P_{\preceq}(G,\mbox{reg})$ in the above theorem follows straightforwardly from the ``structure'' of the measure $\mu$ which is captured by the subgroups $H_i$ and the regularly varying functions $\phi_i$, $1\le i\le k$.
We shall see later that this is not the case when we replace the hypothesis that $G$ is nilpotent  by the hypothesis that $G$ has polynomial volume growth.
\end{rem}

\begin{exa}\label{wmoeq} We return again to the Heisenberg example (Example \ref{exa-Heisenberg}), keeping the same notation.  We let
$H_i=\<s_i\>$
(the subgroup generated by $s_i$) and $\phi_i(t)=(1+t)^{\alpha_i}$ with $\alpha_i>0$ for $1\le i\le 3$. Obviously,
the volume growth degree of each $H_i$ is $d_i=1$ for all $1\leq i\leq 3$,
 and, for $t\ge 1$,
$$\Phi_i(t) \asymp \left\{\begin{array}{cl}  t^{\alpha_i} &\mbox{ if } \alpha_i\in (0,2),\\
 t^2/ \log (1+ t)  &\mbox{ if } \alpha_i=2,\\
t^2 &\mbox{ if }  \alpha_i>2.\end{array}\right.$$
If  none of the $\alpha_i$ is equal to $2$, set $\tilde{\alpha}_i=\min\{2,\alpha_i\}$ and $\weight_i=1/\tilde{\alpha}_i$.  In this case,
$$\mathbf F (t)\asymp
\left\{\begin{array}{cl}  t^{\weight_1+\weight_2+\weight_3}  &\mbox{ if } \weight_3\ge \weight_1+\weight_2,\\
 t^{2 (\weight_1+\weight_2) }& \mbox{ if }  \weight_3\le \weight_1+\weight_2.
\end{array}\right.$$
The best way to treat the cases that include the possibility that $\alpha_i=2$ is to introduce a two-coordinate weight system and set
$$\weight_i=(\weight_{i,1},\weight_{i,2})=\left\{\begin{array}{cl} (1/\tilde{\alpha}_i, 0) & \mbox{ if } \alpha_i\neq 2, \\
 (1/2, 1/2)& \mbox{ if } \alpha_i=2.  \end{array}\right.$$
With this notation, the natural order over the functions  $F_{s_i}=\Phi_i^{-1}$ (in a neighborhood of infinity) is the same as the lexicographical order over the
weights $\weight_i=(\weight_{i,1},\weight_{i,2})$.   Furthermore, we have
$$ \mathbf F (t)\asymp
\left\{\begin{array}{cl}  t^{\weight_{1,1}+\weight_{2,1}+\weight_{3,1}} [\log (1+ t )]^{\weight_{1,2}+\weight_{2,2}+\weight_{3,2}}&\mbox{ if } \weight_3\ge \weight_1+\weight_2,\\
 t^{2 (\weight_{1,1}+\weight_{2,1})} [\log(1+t)]^{2(\weight_{1,2}+\weight_{2,2})}& \mbox{ if }  \weight_3\le \weight_1+\weight_2.
\end{array}\right.$$
The  corresponding probability of return upper bound  is already contained in \cite{SCZ-nil}. Later in this paper we  will prove the (new) matching lower bound.
\end{exa}

\begin{exa} We continue with the  Heisenberg example (Example \ref{exa-Heisenberg}), keeping the same basic notation.  Now, we let $H_1=\<s_1, s_3\>$ and $H_2=\<s_2,s_3\> $ (these are two  abelian subgroups of the Heisenberg group with $d_1=d_2=2$ as each of these subgroups is isomorphic to $\mathbb Z^2$).
We  set again  $\phi_i(t)=(1+t)^{\alpha_i}$ with $\alpha_i>0$ for $1\le i\le 2$.  The associated $\Phi_i$, $i=1,2$, are as above.  Now, we can pick $\Sigma=(s_1,s_2,s_3)$
and set
$F_{s_i}= \Phi_i^{-1}$ for $i=1,2$ and $F_{s_3}= \max\{\Phi^{-1}_1,\Phi^{-1}_2\}$.  Inspection of the construction of the weight functions on commutators
  shows   that the function $F_{s_3}$ will play no role because   $s_3=[s_1,s_2]$ and  $F_{s_3}\preceq  F_{s_1}F_{s_2}$ on a  neighborhood of infinity.
We introduce the two-coordinate weight system  $i\in \{1,2\}$,
$$\weight_i=(\weight_{i,1},\weight_{i,2})=\left\{\begin{array}{cl} (1/\tilde{\alpha}_i, 0) & \mbox{ if } \alpha_i\neq 2, \\
 (1/2, 1/2)& \mbox{ if } \alpha_i=2.  \end{array}\right.$$
The volume function $\mathbf F$ is then given by
$$ \mathbf F (t)\asymp  t^{2 (\weight_{1,1}+\weight_{2,1})} [\log(1+t)]^{2(\weight_{1,2}+\weight_{2,2})}.$$
\end{exa}

\subsection{Application to groups of polynomial volume growth} \label{sec-polvol}

It may at first be surprising that the literal generalization of Theorem \ref{th-nil2} to the case of groups of polynomial volume growth is actually incorrect.  This is because the appropriate  definition of a weight system $(\Sigma, \mathfrak F)$ associated with the data describing a probability measure
$\mu\in \mathcal P_{\preceq}(G,\mbox{reg})$ is more subtle in this case.  In order to obtain an appropriate definition, we will make use of a nilpotent approximation $N$  of the group $G$    ---    that  is, a normal nilpotent subgroup of $G$ with finite index. We will build related weight systems $(\Sigma_G,\mathfrak F_G)$ and $(\Sigma_N,\mathfrak F_N)$ that are compatible in the sense that
$$\forall\,g\in H\subset G,\;\; \|g\|_{\mathfrak F_G}\asymp \|g\|_{\mathfrak F_N}.$$

This construction will have the additional advantage to allow us to bring to bear on the polynomial volume growth case  some of the nilpotent  results of \cite{SCZ-nil}.

 Let $G$ be a group having polynomial volume growth, $\mu \in \mathcal P_{\preceq}(G,\mbox{reg})$, and $H_i,\phi_i,\Phi_i$ as in Definitions
 \ref{def-Preg} and \ref{def-PregPhi}. As $G$ has polynomial volume growth,  Gromov's theorem asserts that $G$ has  a nilpotent subgroup $N$ with finite index.  It is well known that one can choose $N$ to be a normal subgroup  (it suffices to replace $N$ by the kernel of the homomorphism  $G\mapsto \mbox{Sym}(N\backslash G)$  defined by the action of $G$ by  right-multiplication on the right-cosets $Ng$,  $g\in G$. This kernel is a subgroup of $N$ because $Ng=N$ only if $g\in N$).

 From now on, we assume that $N$ is a normal nilpotent subgroup of $G$ with finite index and we denote by $u_0,\dots, u_n$ the right-coset representatives
  so that $G$ is the disjoint union of the $Nu_i$, $0\le i\le n$, and $u_0=e$ (since $N$ is normal, these are also left-coset representatives).  We are going to use $N$ (a nilpotent approximation of $G$) to define a weighted geometry on $G$ that is suitable to study the random walk driven by $\mu$. Simultaneously, we will define a compatible geometry on $N$.

\begin{defin}[Geometry on $G$] \label{DefxiG}
Let $G$, $H_i$, $\phi_i$, $\Phi_i$, $1\le i\le k$, $\Phi_0(r)=\max\{r,r^2\}$,  and $N$  be as above.
Set $$N_i=N\cap H_i .$$
Let $S_0$ be a fixed finite symmetric generating set of $G$ and, for $1\le i\le k$,  let $S_i$ be a symmetric generating set of $N_i$.
Let $\Sigma_G$ be a set of representatives of $(\cup_0^kS_i)\setminus \{e\}$ under the equivalence relation $g\sim g^{-1}$. For each $s\in \Sigma_G$, set
\[
F_{G,s}=\max\left\{ \Phi_i ^{-1}: i\in \{0,\dots, k\}, s\in S_i\right\}.
\]
We refer to this system of weight functions  on $G$ as  $(\Sigma_G,\mathfrak F_G)$.
For  each $s\in \Sigma_G$,  fix $i=i_{G}(s)$ such that $$s\in S_i \;\;\mbox{ and }\;\; F_{G,s}=\Phi_i^{-1},$$ and set $\Sigma_G(i)=\{s\in \Sigma_G: i_G(s)=i\}$.
 \end{defin}
\begin{rem} \label{rem-index}In this definition, it is important that the set $S_i$ is a generating set of $H_i\cap N$, not of  $H_i$ itself. It is not hard to see that $N_i=H_i\cap N$ is
normal and of finite index in $H_i$. Indeed, for any subgroups $A,B$ of a group $G$, we have the index relation $[A: A\cap B]\le [A\cup B:B]$ which we apply here with $A= H_i$ and $B=N$. \end{rem}

\begin{defin}[Geometry on $N$] \label{DefxiN}
Referring to the setting and notation of Definition \ref{DefxiG}, pick a finite symmetric generating set
$\Xi_0$ in $N$ and, for each $1\le i\le k$, set
$$\Xi_i=  \cup _{\ell =0}^{m}  u_\ell\, S_i \,u_{\ell}^{-1},\;\; 1\le i\le k,$$
where the elements $u_\ell$ are the fixed right-coset representatives of $N$ in $G$.
Let $\Sigma_N$ be a set of representatives of
$(\cup_0^k  \Xi_i)\setminus\{e\}$
under the equivalence relation $g\sim g^{-1}$.  For each $s\in \Sigma_N$, set
\[
F_{N,s}=\max\left\{ \Phi_i^{-1} : i\in \{0,\dots, k\}, s\in  \Xi_i \right\} , \;\;s\in \Sigma_N.
\]
We refer to this system of weight functions  as  $(\Sigma_N,\mathfrak F_N)$.
For each $s\in \Sigma_N$, fix $i=i_N(s)$ such that
$$s\in \Xi_i \;\;\mbox{ and }\;\; F_{N,s}=\Phi_i^{-1},$$ and set $\Sigma_N(i)=\{s\in \Sigma_N: i_N(s)=i\}$.
\end{defin}
\begin{rem} In this definition, it is important that the set $\Xi_i$ is used to define $(\Sigma_N,\mathfrak F_N)$ instead of just the generating set $S_i$ of $N_i=H_i\cap N$.
\end{rem}

\begin{rem}  We are abusing notation in denoting our two weight functions systems by $\mathfrak F_G$ and $\mathfrak F_N$. Indeed,
 each of them depends on the entire data including $G$, $N$, the collection of subgroups $H_i$, the collection of functions $\phi_i$,  the choice of  $S_i$, $0\le i \le k$, $\Xi_0$, and the choice of the coset representatives $u_\ell$, $0\le \ell\le m$.
\end{rem}

One important motivation behind these two definitions is the following result.
\begin{theo}  \label{th-quasi}
Referring to the setting and notation of
{\em Definitions} $\ref{DefxiG}$  and   ${\ref{DefxiN}}$,  there are constants $0<c\le C<\infty$ such that
$$\forall g\in N,\;\; c \|g\|_{\mathfrak F_G} \le \|g\|_{\mathfrak F_N} \le C \|g\|_{\mathfrak F_G}.$$

\end{theo}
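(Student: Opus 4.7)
The proof splits into two one-sided bounds, $\|g\|_{\mathfrak F_N}\le C\|g\|_{\mathfrak F_G}$ and $\|g\|_{\mathfrak F_G}\le C'\|g\|_{\mathfrak F_N}$ for $g\in N$, which I would handle by different techniques.

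For the first bound, start from a near-optimal representation $g=\sigma_1\cdots\sigma_m$ over $\Sigma_G\cup\Sigma_G^{-1}$ with $\mathrm{deg}_\sigma(\omega)\le F_{G,\sigma}(R)$, where $R=\|g\|_{\mathfrak F_G}$. Since $G/N$ is finite with right-coset representatives $u_0=e,u_1,\dots,u_n$, track cosets: let $\ell_j$ be the unique index with $\sigma_1\cdots\sigma_j\in Nu_{\ell_j}$ (so $\ell_0=\ell_m=0$, as $g\in N$) and set $\tau_j:=u_{\ell_{j-1}}\sigma_j u_{\ell_j}^{-1}\in N$; then $g=\tau_1\cdots\tau_m$. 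If $\sigma_j\in S_i$ with $i\ge 1$, then $\sigma_j\in N$ forces $\ell_{j-1}=\ell_j$, so $\tau_j\in u_{\ell_{j-1}}S_i^{\pm 1}u_{\ell_{j-1}}^{-1}\subset\Xi_i^{\pm 1}$ is a single $\Sigma_N^{\pm 1}$-letter; if $\sigma_j\in S_0\setminus\bigcup_{i\ge 1}S_i$, then $\tau_j$ lies in a fixed finite subset of $N$ and can be rewritten as a word of uniformly bounded length over $\Xi_0^{\pm 1}$. The weight bookkeeping relies on the inclusion $\{j\ge 1:\sigma\in S_j\}\subseteq\{j\ge 1:u_\ell\sigma u_\ell^{-1}\in\Xi_j\}$, which is immediate from $\Xi_j=\bigcup_\ell u_\ell S_j u_\ell^{-1}$, combined with Remark \ref{rem-Phi} (which says $\Phi_0^{-1}$ is the smallest of the $\Phi_i^{-1}$'s). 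These give $F_{G,\sigma}\preceq F_{N,u_\ell\sigma u_\ell^{-1}}$ and $F_{N,s}\succeq \Phi_0^{-1}$ for every $s\in\Sigma_N$, from which one sees that each $\mathrm{deg}_s$ in the rewritten word is at most a constant times $F_{N,s}(R)$. Doubling of the weight functions (Remark \ref{rem-doubling}) then yields $\|g\|_{\mathfrak F_N}\le CR$.

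For the reverse bound, exploit that $N$ is nilpotent and apply Theorem \ref{th-nilgeom} to the system $(N,\Sigma_N,\mathfrak F_N)$. The theorem provides an integer $Q$, a constant $C$, and a fixed sequence $\eta_1,\dots,\eta_Q\in\Sigma_N$ such that every $g\in N$ with $\|g\|_{\mathfrak F_N}\le R$ has a coordinate-type expression $g=\eta_1^{x_1}\cdots\eta_Q^{x_Q}$ with $|x_j|\le CF_{N,\eta_j}(R)$. For each $\eta_j\in\Xi_0$, a pre-fixed bounded-length word over $S_0^{\pm 1}$ expresses $\eta_j$, so $\eta_j^{x_j}$ contributes at most $C'\Phi_0^{-1}(R)\le C''F_{G,s_0}(R)$ occurrences of each $s_0\in S_0$ (since $F_{G,s_0}\succeq\Phi_0^{-1}$). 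For each $\eta_j\in\Xi_i$ with $i\ge 1$, I would select a representation $\eta_j=u_{\ell_j}s_j u_{\ell_j}^{-1}$ with $s_j\in S_i$, choosing the index $i$ to maximize $\Phi_i^{-1}$ over all $i$ with $\eta_j\in\Xi_i$; this guarantees $F_{G,s_j}\ge F_{N,\eta_j}$, and then $\eta_j^{x_j}=u_{\ell_j}s_j^{x_j}u_{\ell_j}^{-1}$ costs $|x_j|\le CF_{G,s_j}(R)$ letters of $s_j$ and only $O(1)$ letters in $S_0$ (from fixed bounded expressions for $u_{\ell_j}$). Concatenating the $Q$ blocks produces a $\Sigma_G^{\pm 1}$-word realizing $g$ with all degrees at most a constant times $F_{G,s}(R)$, hence $\|g\|_{\mathfrak F_G}\le C'R$.

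The main obstacle, common to both directions, is that the auxiliary $u_\ell$-letters introduced each time the word changes cosets might exhaust the (typically small) $F_{G,s_0}$-budget on short-range generators $s_0\in S_0$. The resolution is uniform: because $\Phi_0^{-1}$ is the smallest weight, any uniformly bounded number of $S_0$-letters per elementary operation remains within the $F_{G,s_0}$-budget, while the definition $\Xi_i=\bigcup_\ell u_\ell S_i u_\ell^{-1}$ makes the fine compatibility $F_{G,\sigma}\preceq F_{N,u_\ell\sigma u_\ell^{-1}}$ between the two weight systems automatic. Together with the application of Theorem \ref{th-nilgeom} to the nilpotent group $N$, these points yield both inequalities with uniform constants.
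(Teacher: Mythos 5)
Your proof is correct and follows essentially the same two-part argument as the paper: a Reidemeister--Schreier coset rewriting (the paper writes $\widetilde{g_j}$ for your $u_{\ell_j}$) for the bound $\|g\|_{\mathfrak F_N}\preceq\|g\|_{\mathfrak F_G}$, and the coordinate-type product decomposition from Theorem \ref{th-nilgeom} applied to $(N,\Sigma_N,\mathfrak F_N)$, followed by conjugation of each block back into $\Sigma_G$-letters, for the reverse. The only difference is bookkeeping: the paper organizes the case split via the fixed assignments $i_G(\sigma)$ and $i_N(\xi)$ (so that each letter is tagged with the index realizing the max of $\Phi_i^{-1}$), which is exactly what your ``maximize $\Phi_i^{-1}$'' clause and your handling of the overlaps among the $S_i$ (resp.\ the $\Xi_i$) accomplish implicitly.
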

\begin{proof} (1) We first prove that for $g\in N$,  $\|g\|_{\mathfrak F_N}\le C \|g\|_{\mathfrak F_G}$.
For $g\in N$, let  $\word =\sigma^{{\eps} _1}_1\dots \sigma_p^{{\eps} _p}$, ${\eps} _i\in \{\pm 1\}$, $\sigma_i\in \Sigma_G$, be a word on the alphabet $\Sigma_G \cup \Sigma_G^{-1}$ so that $g=\word$ in $G$.   Set
$$g_0=e,\;\; g_i=\sigma^{{\eps} _1}_1\dots \sigma_i^{{\eps} _i},\;\; 0\le i\le p,$$
and write (using $g_0=e$, $g_p\in N$),
$$g=\prod_{i=1}^p \widetilde{g_{i-1}} \sigma_i^{{\eps} _i} (\widetilde{g_i})^{-1}.$$
where, for any $g\in G$,  $\tilde{g}\in \{u_0,\dots,u_m\}$ denotes the fixed coset representative of $Ng$.
Observe that, by definition,  $\widetilde{xy}=\widetilde{\widetilde{x}y}$ and that
$$\widetilde{g_i}=\widetilde{g_{i-1}\sigma_i^{{\eps} _i}}= \widetilde{\widetilde{g_{i-1}} \sigma_i^{{\eps} _i}}.$$
Setting   $j_i=j$ if $\widetilde{g_i}=u_j$, we have
$$g=\prod_{i=1}^p u_{j_{i-1}} \sigma_i^{{\eps} _i} (\widetilde{u_{j_{i-1}}\sigma^{{\eps} _i}_i })^{-1}.$$
By definition,  each factor of this product is in $N$ because $ x (\widetilde{x})^{-1}\in N $ for any $x\in G$.
If $\sigma_i\in \Sigma_G(\ell)\subset  N_\ell  \subset N$, $\ell\in\{1,\dots,k\}$, then
$$\widetilde{u_{j_{i-1}}\sigma^{{\eps} _i}_i } = u_{j_{i-1} }$$
and
$$ u_{j_{i-1}} \sigma_i^{{\eps} _i} (\widetilde{u_{j_{i-1}}\sigma^{{\eps} _i}_i })^{-1}=
u_{j_{i-1}} \sigma_i^{{\eps} _i} u_{j_{i-1}}^{-1} \in \Xi _\ell .$$

If $\sigma_i \in \Sigma_G(0)$, then we can write
$$u_{j_{i-1}} \sigma_i^{{\eps} _i} (\widetilde{u_{j_{i-1}}\sigma^{{\eps} _i}_i })^{-1}$$
as a word of uniformly bounded length at most $K$ using  elements in $\Xi_0$.

Now, we interpret this construction as providing us with a word $\word'$ over the alphabet $\Sigma_N\cup \Sigma_N^{-1}$  that represents the given element $g\in N$.
By construction,  if $\xi= u\sigma u^{-1}\in \Xi_\ell $ with $\sigma\in \Sigma_G(\ell)$, $1\le \ell\le k$, we have
$$\mbox{deg}_\xi(\word')\le \mbox{deg}_\sigma(\word)   \mbox{ and }  F_{G,\sigma}=F_{N,\xi}.$$
Further, if $\xi\in \Xi_0$, then
$$\mbox{deg}_\xi(\word')\le  K \sum_{\theta\in \Sigma_G(0)}\mbox{deg}_\theta(\word).$$
This shows that
$$\forall\,g\in N,\;\;\|g\|_{\mathfrak F_N}\le K\#\Sigma_G(0) \, \|g\|_{\mathfrak F_G}.$$

(2) We next prove that, for $g\in N$,  $\|g\|_{\mathfrak F_G}\le C \|g\|_{\mathfrak F_N}$.
For this part we rely on the main result of \cite{SCZ-nil}.   By \cite[Theorem 2.10]{SCZ-nil}, there exist an integer $Q$, a constant $C$  and a fixed sequence
$\xi_1\dots,\xi_Q \in  \Sigma_N$ such that  any $g\in N$ with  $\|g\|_{\mathfrak F_ N}= R$ can be written in the form
$$g= \prod_1^Q\xi_i^{x_i} \mbox{ with } x_i\in \mathbb Z, \;\;|x_i|\le  C F_{N,\xi_i}(R).$$
Let $\word$ be the word over the alphabet $\Sigma_N\cup\Sigma_N^{-1}$ corresponding to this product.
On the one hand, for each $i\in \{1,\dots,Q\}$ such that $\xi_i\in \Sigma_N(j)$, $j\ge 1$, there are
$u\in \{u_1,\dots,u_m\} $ and $\sigma \in S_j$ such that $\xi_i=u\sigma u^{-1}$.
Hence, for such $i$, we can write
$$\xi_i^{x_i}=u \sigma ^{x_i} u^{-1}$$
and each $u$ is a product of uniformly bounded length at most $K$ over $S_0$.
On the other hand, each $\xi_i\in \Sigma_N(0)$ can also be written as  a finite product of uniformly bounded length at most $K$ using elements in $S_0$.
Using these decompositions in the product corresponding to $\word$  gives us a word  $\word'$ representing of $g$ over the alphabet $\Sigma_G\cup \Sigma_G^{-1}$ with
$$ \mbox{deg}_\sigma(\word') \le  \mbox{deg}_{u\sigma u^{-1}}(\word)  \mbox{ if }  \xi=u\sigma u^{-1}\in \Sigma_N(j),\; u\in \{u_1,\dots,u_m\},\;1\le j\le k,$$
 and
 $$\mbox{deg}_\sigma(\word')\le  2KQ + K\sum_{i:\xi_i\in \Sigma_N(0)} |x_i|,\;\; \mbox{ if } \sigma\in  S_0.$$
Hence,
$$\forall\,g\in N,\;\; \|g\|_{\mathfrak F_G}\le 2KQ\#\Sigma_N(0) \|g\|_{\mathfrak F_N}.$$
The proof of Theorem \ref{th-quasi} is complete.  \end{proof}

\begin{defin}\label{def-FG}  Let $G$ be a group having polynomial volume growth,  $\mu \in \mathcal P_{\preceq}(G,\mbox{reg})$
and $H_i$, $\phi_i,$ $\Phi_i$ as in {Definitions
 $\ref{def-Preg}$} and $\ref{def-PregPhi}$.
Referring to the notation of   Definitions $\ref{DefxiG}$, $\ref{DefxiN}$ and $\ref{def-volnilF}$,
 set $$\mathbf F_{G,\mathfrak F_G}=\mathbf F_{N,\mathfrak {F}_N},$$
 where  $ \mathbf F_{N,\mathfrak F_N}$ is the regularly varying function associated with $(N,\Sigma_N,\mathfrak F_ N)$ by   {Definition $\ref{def-volnilF}$}.
\end{defin}

\begin{cor} \label{cor-polvol}
Referring to the setting and notation of  {\em Definitions $\ref{DefxiG}$ and $\ref{def-FG}$},  we have
$$\#\{ g\in G: \|g\|_{\mathfrak F_G}\le  R) \asymp   \mathbf F_{G,\mathfrak F_G} (R).$$
Furthermore, there exist a finite sequence $\theta_i\in \Sigma_G\setminus \Sigma_G(0)$, $1\le i\le Q$,  and a constant $C$ such that,  for any $R\ge 1$ and  any element $g$ of $G$ satisfying $\|g\|_{\mathfrak F_G} \le R$, there are elements $g_i\in G$, $0\le i\le Q$, and $x_i\in \mathbb Z$, $1\le i\le Q$, such that
$$g=g_0\prod_{i=1}^{Q} \theta_j^{x_j} g_{i} $$
 with
 $$   |x_j|\le C F_{G,\theta_j}(R), \;1\le j\le Q \; \mbox{ and }\; |g_i|^2_{S_0}\le C R,\; 0\le i\le Q. $$
Here $|a|_{S_0}$ is the word length of the element $a\in G$ over the generating set $S_0$.
\end{cor}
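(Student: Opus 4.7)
The plan is to bootstrap the result from the nilpotent case (Theorem~\ref{th-nilgeom}) by exploiting the quasi-norm comparison of Theorem~\ref{th-quasi} together with the finiteness of $[G:N]$. Fix right-coset representatives $u_0=e,u_1,\dots,u_m$ of $N$ in $G$ and note that each $u_\ell$ is expressible as a word over $S_0\subset\Sigma_G$ of length bounded by a constant $K$.

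For the volume estimate, every $g\in G$ admits a unique factorization $g=nu_\ell$ with $n\in N$. Prepending the bounded $S_0$-word for $u_\ell^{-1}$ to a near-optimal word realizing $\|g\|_{\mathfrak F_G}$ shows that $\|n\|_{\mathfrak F_G}\le C_1\|g\|_{\mathfrak F_G}$ whenever $\|g\|_{\mathfrak F_G}\ge 1$, using the doubling property of each regularly varying $F_{G,s}$; symmetrically $\|g\|_{\mathfrak F_G}\le C_2\|n\|_{\mathfrak F_G}$. Combined with Theorem~\ref{th-quasi}, this reduces $\#\{g\in G:\|g\|_{\mathfrak F_G}\le R\}$ to $(m+1)\cdot\#\{n\in N:\|n\|_{\mathfrak F_N}\le CR\}$, which is comparable to $\mathbf F_{N,\mathfrak F_N}(R)=\mathbf F_{G,\mathfrak F_G}(R)$ by the nilpotent volume estimate in Theorem~\ref{th-nilgeom} and Definition~\ref{def-FG}.

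For the decomposition, given $g\in G$ with $\|g\|_{\mathfrak F_G}\le R$, I would write $g=nu_\ell$, bound $\|n\|_{\mathfrak F_N}\le CR$ as above, and invoke the second half of Theorem~\ref{th-nilgeom} applied to $(N,\Sigma_N,\mathfrak F_N)$ to obtain a \emph{fixed} sequence $\xi_1,\dots,\xi_{Q'}\in\Sigma_N$ and integers $y_i$ with $|y_i|\le C F_{N,\xi_i}(R)$ such that $n=\prod_{i=1}^{Q'}\xi_i^{y_i}$. Each factor $\xi_i^{y_i}$ is then rewritten in $G$ as follows. When $\xi_i\in\Sigma_N(j)$ with $j\ge 1$, the construction of $\Xi_j$ gives a coset representative $u$ and $\sigma\in S_j$ with $\xi_i=u\sigma u^{-1}$, so $\xi_i^{y_i}=u\sigma^{y_i}u^{-1}$; the element $\sigma$ (or its inverse, after flipping the sign of $y_i$) provides a $\theta\in\Sigma_G\setminus\Sigma_G(0)$ with $F_{N,\xi_i}=\Phi_j^{-1}\le F_{G,\theta}$, while $u$ and $u^{-1}$ are bounded $S_0$-words. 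When $\xi_i\in\Sigma_N(0)$, the element $\xi_i$ itself is a bounded $S_0$-word, and since $F_{N,\xi_i}=\Phi_0^{-1}$ with $\Phi_0(r)=\max\{r,r^2\}$, we have $|y_i|\le C\sqrt{R}$, so $\xi_i^{y_i}$ has $S_0$-length $O(\sqrt{R})$. Multiplying by the final $u_\ell$ and absorbing all consecutive $S_0$-blocks into fillers produces the asserted form $g=g_0\prod_{j=1}^Q\theta_j^{x_j}g_j$.

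The main obstacle is administrative rather than conceptual: one must check that the $\theta_j$'s come from a finite list inside $\Sigma_G\setminus\Sigma_G(0)$ (independent of $g$), match $F_{N,\xi_i}$ with $F_{G,\theta_j}$ through the conjugation $\xi_i=u\sigma u^{-1}$ so that $|x_j|\le CF_{G,\theta_j}(R)$, and verify the aggregated $S_0$-length bound. The latter holds because there are at most $Q'+1$ filler blocks, each a concatenation of a bounded number of coset representatives and factors $\xi_i^{y_i}$ with $\xi_i\in\Sigma_N(0)$ of $S_0$-length $O(\sqrt{R})$, so each filler has $S_0$-length $O(\sqrt{R})$ and hence $|g_i|_{S_0}^2\le CR$.
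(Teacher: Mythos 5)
Your proposal is correct and follows essentially the same route as the paper: write $g=nu_\ell$ with $n\in N$, note that $\|n\|_{\mathfrak F_G}\le C\|g\|_{\mathfrak F_G}$ (since $u_\ell^{-1}$ costs a bounded $S_0$-word and the $F_s$ are doubling), transfer to $\|\cdot\|_{\mathfrak F_N}$ via Theorem~\ref{th-quasi}, and then invoke the coordinate decomposition of Theorem~\ref{th-nilgeom} for $(N,\Sigma_N,\mathfrak F_N)$ and unpack each factor $\xi_i^{y_i}$ back into $\Sigma_G$ by conjugation (exactly the ``part (2)'' argument in the proof of Theorem~\ref{th-quasi}, which is what the paper points to). You have merely made explicit the bookkeeping that the paper delegates to that cross-reference; in particular your observation that the $\Sigma_N(0)$-factors contribute $O(\sqrt R)$ in $S_0$-length because $F_{N,\xi_i}=\Phi_0^{-1}(R)\asymp\sqrt R$ is the right reason the filler bound $|g_i|_{S_0}^2\le CR$ holds.
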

\begin{proof} The volume estimate follows from Theorem \ref{th-nilgeom} applied to $(N, \Sigma_N,\mathfrak F_N)$ together with
Theorem \ref{th-quasi}. Theorem \ref{th-nilgeom}  also gives
 an explicit description of the  function $\mathbf F_{N,\mathfrak F_N}=\mathbf F_{G,\mathfrak F_G}$.

Similarly, to prove the product decomposition of an element $g\in G$ stated in the corollary, we use the fact that any such $g$ can be written $hu$
with $h\in N$ and $u\in \{u_1,\dots,u_k\}$.  Obviously  $\|h\|_{\mathfrak F_G} \le C_1 \|g\|_{\mathfrak F_G}$ and, by Theorem \ref{th-quasi}, $\|h\|_{\mathfrak F_N}\le C_2 \|h\|_{\mathfrak F_G}$.  It then suffices to repeat the argument used in part (2) of the proof of Theorem \ref{th-quasi} to obtain the desired product decomposition.
\end{proof}

\begin{cor} \label{cor-polmu1}
Let $G$ be a group having polynomial volume growth,  $\mu \in \mathcal P_{\preceq}(G,\mbox{\em reg})$ and $H_i$, $\phi_i,$ $\Phi_i$ as in   {\em Definitions}
 {\em \ref{def-Preg}} and $\ref{def-PregPhi}$.    We have
$$\Lambda_{2,G,\mu}(v)\succeq 1/\mathbf F^{-1} (v), \;\;v\ge 1\;\; \mbox{ and } \;\;\mu^{(n)}(e)\preceq 1/ \mathbf F(n),\;\;n=1,2,\dots,$$
where $\mathbf F= \mathbf  F_{G,\mathfrak F_G}$.
\end{cor}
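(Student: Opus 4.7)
The plan is to deduce this as a direct application of Proposition \ref{pro-PS4}, using Corollary \ref{cor-polvol} as the input that supplies both the required volume lower bound and the product decomposition of elements with bounded quasi-norm. In this way, the construction of the geometry $(\Sigma_G,\mathfrak{F}_G)$ built from the nilpotent approximation $N$ plays exactly the role that Theorem \ref{th-nilgeom} played in the proof of Theorem \ref{th-nil2}. Since $\mu\in\mathcal{P}_\preceq(G,\mathrm{reg})$, property (\ref{order}) is satisfied by the weight functions $F_{G,s}$, $s\in\Sigma_G$, so the quasi-norm $\|\cdot\|_{\mathfrak{F}_G}$ and the associated function $\mathbf{F}_{G,\mathfrak{F}_G}$ are well defined.

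First I would set up, for each $i\in\{0,1,\dots,k\}$, the sets
\[
K_i(r)=\{h\in H_i: \Phi_i(|h|_i)\le r\}
\]
appearing in Proposition \ref{pro-PS4}, where $|\cdot|_i$ is the word length on $H_i$ (with $H_0=G$ and $|\cdot|_0$ the word length with respect to $S_0$). Then I would use the product decomposition furnished by Corollary \ref{cor-polvol}: any $g\in G$ with $\|g\|_{\mathfrak{F}_G}\le R$ can be written as $g=g_0\prod_{j=1}^{Q}\theta_j^{x_j}g_j$ with $\theta_j\in\Sigma_G\setminus\Sigma_G(0)$, $|x_j|\le CF_{G,\theta_j}(R)=C\Phi_{i_G(\theta_j)}^{-1}(R)$, and $|g_i|_{S_0}^2\le CR$. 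Since $\Phi_{i_G(\theta_j)}$ is increasing and $\Phi_0(t)=\max\{t,t^2\}$, there is a constant $C'$ such that $\theta_j^{x_j}\in K_{i_G(\theta_j)}(C'R)$ (because $\theta_j^{x_j}$ lies in $H_{i_G(\theta_j)}$ with intrinsic length at most $|x_j|$), and $g_i\in K_0(C'R)$. Hence every element of $\{g:\|g\|_{\mathfrak{F}_G}\le R\}$ belongs to the $(2Q+1)$-fold product $\bigl(\cup_{i=0}^k K_i(C'R)\bigr)^{2Q+1}$.

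Combining with the volume estimate $\#\{g\in G:\|g\|_{\mathfrak{F}_G}\le R\}\asymp \mathbf{F}_{G,\mathfrak{F}_G}(R)$ from Corollary \ref{cor-polvol}, we obtain
\[
\#\Bigl(\cup_{i=0}^k K_i(C'R)\Bigr)^{2Q+1}\succeq \mathbf{F}_{G,\mathfrak{F}_G}(R).
\]
Since $\mathbf{F}_{G,\mathfrak{F}_G}$ is a positive monotone regularly varying function of positive index, replacing $R$ by a constant multiple of $r$ preserves the inequality up to the $\simeq$ equivalence that Proposition \ref{pro-PS4} tolerates. Applying Proposition \ref{pro-PS4} with $\mathbf{F}=\mathbf{F}_{G,\mathfrak{F}_G}$ (possibly up to constants absorbed into the $\preceq$ and $\succeq$ relations) yields both $\Lambda_{2,G,\mu}\succeq 1/\mathbf{F}^{-1}$ and $\mu^{(n)}(e)\preceq 1/\mathbf{F}(n)$, as claimed.

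The only delicate point is the bookkeeping that matches the decomposition of $g$ provided by Corollary \ref{cor-polvol} with the correct $K_i$ sets: one has to verify that for each generator $\theta_j$, the index $i_G(\theta_j)$ and the weight $F_{G,\theta_j}=\Phi_{i_G(\theta_j)}^{-1}$ conspire so that the bound $|x_j|\le CF_{G,\theta_j}(R)$ translates into membership in $K_{i_G(\theta_j)}(C'R)$, and similarly for the ``small word-length'' factors $g_i$ and $K_0$. Once this correspondence is checked and a single constant $C'$ independent of $R$ is produced, the corollary follows without further work.
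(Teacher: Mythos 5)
Your proof is correct and takes essentially the same approach as the paper's one-line proof, which simply says that the stated result follows from Proposition~\ref{pro-PS4} via both parts of Corollary~\ref{cor-polvol}; you have filled in the bookkeeping that the paper leaves implicit. One small imprecision: $\theta_j^{x_j}$ has intrinsic length in $H_{i_G(\theta_j)}$ at most $|\theta_j|_{i_G(\theta_j)}\cdot|x_j|$ (not $|x_j|$ itself, since $\theta_j$ is a generator of $N_{i_G(\theta_j)}=H_{i_G(\theta_j)}\cap N$ and need not be a single generator of $H_{i_G(\theta_j)}$), but this fixed constant factor is absorbed by the doubling property of $\Phi_i$ and the constant $C'$ you introduce, so the argument is unaffected.
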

\begin{proof} Using both parts of Corollary \ref{cor-polvol}, the stated result follows from Proposition \ref{pro-PS4}.
\end{proof}

\section{Lower bounds on return probabilities} \setcounter{equation}{0} \label{sec-low}

In this section, we derive sharp lower bounds for the return
probability in the case of random walks driven by measures $\mu\in
\mathcal P_{\preceq}  (G,\mbox{reg})$, when $G$ has polynomial volume growth.
According to well-known results recalled with pointers to the
literature in Section \ref{sec-LambdaPhi},  if $\mathbf F$ is a
given regularly varying function of positive index, we have the
equivalence
$$\forall\,v\ge 1,\;\Lambda_{2,G,\mu}(v)\preceq 1/\mathbf F^{-1}(v)    \Longleftrightarrow    \forall n=1,2\dots,\;\mu^{(2n)}(e)\succeq 1/\mathbf F(n).$$

Accordingly, one of the simplest methods to obtain a lower bound on $\mu^{(2n)}(e)$ is to find a family of test functions, $\zeta_R$, $R\ge 1$,
supported on a set of volume  $\mathbf F(R)$  and such that   $$\frac{\mathcal E_{G,\mu}(\zeta_R,\zeta_R)}{\|\zeta_R\|_2^2}\preceq  1/R. $$

As $\mu$ is a convex combination of measures which are either finitely supported ($\mu_0$) or supported by a subgroup $H_i$ and associated with a regularly varying function $\phi_i$, the following two lemmas will be exactly what we need.
Lemma \ref{lem-smooth} will be used in the proof of Lemma \ref{lem-Testzeta} to control test functions constructed with the help of $\| \cdot \|_{\mathfrak F}$.

\begin{lem} \label{lem-smooth}
Let $G$ be a group of polynomial volume growth.  Let $\Sigma=(s_1,\dots,s_q)$ be a generating tuple of distinct elements  and let $\mathfrak F
=\{F_s: s\in \Sigma\}$ be a weight function system so that each $F_s$ and $F_s^{-1}\in \mathcal C^\infty([0,\infty))$, and $F_s$ is an increasing  function vanishing at $0$ and of smooth regular variation of positive index less than $1$ at infinity.
Fix $A_1$, $A_2$ and $A_3\ge 1$.  For any $g$, $h\in G$,  let $\word  \in \cup_{p=0}^\infty (\Sigma\cup \Sigma^{-1})^p$ and $R\ge 1$  with $h=\word$ in $G$,
$$\|g\|_{\mathfrak F}\le A_1R, \;\;\; \|h\|_{\mathfrak F}\le A_2R  \;\mbox{ and }\,\, F_\sigma^{-1}(\mbox{\em deg}_\sigma(\word))\le A_3R.$$Then, we have
\begin{align*}
|\|gh\|_{\mathfrak F}-\|g\|_{\mathfrak F}| &\le C_{\mathfrak
F}(A_1,A_2,A_3) \max_{\sigma\in \Sigma}\left\{ \frac{R}{F_\sigma
(R)} \mbox{\em deg}_\sigma(\word)\right\}.\end{align*}
\end{lem}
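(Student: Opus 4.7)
My plan is to establish the one-sided bound
\[
\|gh\|_{\mathfrak F}-\|g\|_{\mathfrak F}\le C\max_{\sigma\in\Sigma}\left\{\frac{R}{F_\sigma(R)}\mbox{deg}_\sigma(\omega)\right\}
\]
by concatenating a near-optimal representation of $g$ with the given word $\omega$, and then to deduce the reverse inequality by reapplying this bound to the factorization $g=(gh)\cdot h^{-1}$. First, given $\epsilon>0$, the infimum in Definition \ref{def-quasinorm} lets us choose a word $\omega_g$ representing $g$ in $G$ with $\mbox{deg}_\sigma(\omega_g)\le F_\sigma(\|g\|_{\mathfrak F}+\epsilon)$ for each $\sigma\in\Sigma$. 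The concatenation $\omega_g\omega$ represents $gh$, satisfies $\mbox{deg}_\sigma(\omega_g\omega)\le F_\sigma(\|g\|_{\mathfrak F}+\epsilon)+\mbox{deg}_\sigma(\omega)$, and therefore by Definition \ref{def-quasinorm}
\[
\|gh\|_{\mathfrak F}\le \max_{\sigma\in\Sigma} F_\sigma^{-1}\bigl(F_\sigma(\|g\|_{\mathfrak F}+\epsilon)+\mbox{deg}_\sigma(\omega)\bigr).
\]

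Next, I would apply the mean value theorem to $F_\sigma^{-1}$ to rewrite the argument of the max as $\|g\|_{\mathfrak F}+\epsilon+\mbox{deg}_\sigma(\omega)(F_\sigma^{-1})'(\xi_\sigma)$ for some $\xi_\sigma\in [F_\sigma(\|g\|_{\mathfrak F}+\epsilon),F_\sigma(\|g\|_{\mathfrak F}+\epsilon)+\mbox{deg}_\sigma(\omega)]$. Using the three hypotheses, regular variation, and $R\ge 1$, one checks $\xi_\sigma\le C_1 F_\sigma(R)$ and hence $F_\sigma^{-1}(\xi_\sigma)\le C_2 R$ with $C_1,C_2$ depending only on $A_1,A_3$. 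The key analytic input is the two-sided derivative estimate $F_\sigma'(t)\asymp F_\sigma(t)/t$ for all $t>0$: the upper half is precisely \eqref{Fder}, whereas the matching lower half uses that smooth regular variation of strictly positive index $\alpha_\sigma<1$ yields $tF_\sigma'(t)/F_\sigma(t)\to\alpha_\sigma>0$ at infinity, while the boundary condition $F_\sigma(t)\asymp t$ on $[0,1]$ from Subsection \ref{S:2.1} controls the origin. Inverting via the identity $(F_\sigma^{-1})'(\xi)=1/F_\sigma'(F_\sigma^{-1}(\xi))$ gives $(F_\sigma^{-1})'(\xi_\sigma)\le C_3 R/F_\sigma(R)$; taking the maximum over $\sigma$ and letting $\epsilon\downarrow 0$ yields the one-sided inequality displayed above.

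For the reverse direction, I note that the one-sided bound together with $\mbox{deg}_\sigma(\omega)\le F_\sigma(A_3R)\le C_1F_\sigma(R)$ forces $\|gh\|_{\mathfrak F}\le A_4 R$ for some $A_4=A_4(A_1,A_3)$. Since $h^{-1}$ is represented by the reversed word $\omega^{-1}$ with $\mbox{deg}_\sigma(\omega^{-1})=\mbox{deg}_\sigma(\omega)$, the pair $(gh,h^{-1})$ satisfies the hypotheses of the lemma with the constants $(A_4,A_2,A_3)$. Reapplying the first half of the argument to the product $(gh)\cdot h^{-1}=g$ yields the matching control on $\|g\|_{\mathfrak F}-\|gh\|_{\mathfrak F}$ and hence the two-sided bound with $C_{\mathfrak F}(A_1,A_2,A_3)$ depending on $A_1,A_2,A_3$. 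The main technical obstacle is securing the uniform two-sided derivative estimate $F_\sigma'(t)\asymp F_\sigma(t)/t$ on all of $(0,\infty)$, together with the transfer from the arguments $F_\sigma(A_1R+\epsilon)$ and $F_\sigma(A_3R)$ back to $F_\sigma(R)$; both require smoothness of variation and the index being strictly in $(0,1)$, which is precisely why the lemma is formulated in terms of the regularized weight systems produced by Proposition \ref{pro-variation}(b).
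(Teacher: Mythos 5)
Your proof is correct and follows essentially the same strategy as the paper's: concatenate a near-optimal word for $g$ with $\omega$ to bound $\|gh\|_{\mathfrak F}$ via $\max_\sigma F_\sigma^{-1}(\deg_\sigma(\omega_g)+\deg_\sigma(\omega))$, estimate the increment using the derivative bound for $F_\sigma^{-1}$ (the paper applies $\sup_{[0,T]}[F_\sigma^{-1}]'\le C_\sigma F_\sigma^{-1}(T)/T$ directly; you equivalently pass through $(F_\sigma^{-1})'(\xi)=1/F_\sigma'(F_\sigma^{-1}(\xi))$ and the two-sided estimate $F_\sigma'(t)\asymp F_\sigma(t)/t$), and obtain the reverse inequality by rerunning the argument with $g'=gh$, $h'=h^{-1}$, $\omega'=\omega^{-1}$. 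The only cosmetic differences are your $\epsilon$-approximation (the paper notes the infimum is achieved, since degrees are integers) and your explicit verification that $\|gh\|_{\mathfrak F}\lesssim R$, which the paper handles by simply replacing $A_1$ by $A_1+A_2$.
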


\begin{proof} Each $F^{-1}_s$ is smooth at $0$ and  of  smooth regular variation  of degree greater than $1$, so that  the derivative $[F^{-1}_s]'$ of $F^{-1}_s$
satisfies
$$\sup_{[0,T]} \left\{[F^{-1}_s]'\right\}\le C_s \frac{F^{-1}_s(T)}{ T},\quad T>0.$$
Let
$$C_{\mathfrak F}(A_1,A_3)=\max\left\{C_s \frac{F_s(t)\cdot F_s^{-1}(F_s(A_1t)+F_s(A_3t))}{t(F_s(A_1t)+F_s(A_3t))}: s\in \Sigma, t>0\right\}.$$
 If $\|gh\|_{\mathfrak F}>\|g\|_{\mathfrak F}$, let $\word_g \in \cup_{p=0}^\infty (\Sigma\cup \Sigma^{-1})^p$
be such that
$$\|g\|_{\mathfrak F}=\max_{\sigma\in \Sigma}\{ F_\sigma^{-1}(\mbox{deg}_\sigma(\word_g)\}.$$
 Set $x_\sigma=\mbox{deg}_\sigma(\word_g)$ and $y_\sigma=\mbox{deg}_\sigma(\word)$, where $\word$ is as in the statement of the lemma. We have, by Definition \ref{def-quasinorm},
\begin{align*}
|\|gh\|_{\mathfrak F}-\|g\|_{\mathfrak F} |&=\|gh\|_{\mathfrak F}-\|g\|_{\mathfrak F} \\
&\le \max_{\sigma\in \Sigma} \left\{
F_\sigma^{-1}(x_\sigma+ y_\sigma)- F_\sigma^{-1}(x_\sigma)  \right\}\\
&\le C_\sigma \frac{F_\sigma^{-1}(F_\sigma(A_1R)+F_\sigma(A_2R))}{F_\sigma(A_1R)+F_\sigma(A_2R)} y_\sigma\\
 &\le C_{\mathfrak F}(A_1,A_3)  \max_{\sigma\in \Sigma}\left\{
\left(\frac{ R }{F_\sigma(R)}\right) y_\sigma
 \right\}.\end{align*}
If, instead $\|gh\|_{\mathfrak F}<\|g\|_{\mathfrak F}$, run the same
argument with $g'=gh$ and $h'=h^{-1}$ (and $\word$ replace by
$\word^{-1}$, the formal word inverse of $\word$). This gives the
same bound with $A_1$ replaced by $A_1+A_2$.
\end{proof}

\begin{lem}  \label{lem-Testzeta}
Let $G$ be a group of polynomial volume growth.  Let
$\Sigma=(s_1,\dots,s_q)$ be a generating tuple of distinct elements,
and let $\mathfrak F =\{F_s: s\in \Sigma\}$ be a weight function
system satisfying {\em (\ref{order})} with $F_s$ and $F_s^{-1}\in
\mathcal C^1([0,\infty))$ for $s\in \Sigma$. Assume that  $F_s$ is
an increasing  function,  vanishing at $0$, and  of smooth regular
variation of positive index less than $1$ at infinity.  Let
$F_\star=\min\{F_s: s\in \Sigma\}$ be the smallest of the weight
functions. Let $H$ be a subgroup of $G$ of volume growth of index
$d$ with word length $|\cdot|$. Let $\phi$ be a positive increasing
function on $[0,\infty)$ which is also  regularly varying function
of positive index, and set
$\Phi(t)=t^2/\int_0^t\frac{2s}{\phi(s)}ds $ for $t\ge 1$. Let $\mu_H$
be a probability measure supported on $H$ and of the form
$\mu_H(h)\asymp [(1+|h|)^{d}\phi(1+|h|)]^{-1}$. Assume that there
exists $A$ such that, for any $h\in H$, there is a word
$\theta:=\theta_h\in \cup_{p=0}^\infty (\Sigma\cup \Sigma^{-1})^p$
such that $h=\theta $ in $G$ and,  for each $s\in \Sigma$,
\begin{itemize}
\item  either $\forall\,t\ge 1,$ it holds that $A F_s \circ F_\star^{-1} (\sqrt{t} )  \ge \Phi^{-1}(t)$, in which case  $\mbox{\em deg}_s(\theta) \le A |h|$,
\item or    $ \mbox{\em deg}_s(\theta) \le A. $
\end{itemize}
Under these hypotheses the function
$g\mapsto \xi_R(g)=(R-\|g\|_{\mathfrak F})_+$ satisfies
$$\frac{\mathcal E_{G,\mu_H}(\zeta_R,\zeta_R)}{\|\zeta_R\|_2^2 }\le\frac{C}{F_\star (R)^2}.$$
Also, if $\mu_0$ is a symmetric finitely supported measure,
$$\frac{\mathcal E_{G,\mu_0}(\zeta_R,\zeta_R)}{\|\zeta_R\|_2^2 }\le\frac{C}{F_\star (R)^2}.$$
\end{lem}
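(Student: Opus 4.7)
The strategy is the classical test-function approach. For the denominator $\|\zeta_R\|_2^2$, note that $\zeta_R(g)\ge R/2$ whenever $\|g\|_{\mathfrak F}\le R/2$, so $\|\zeta_R\|_2^2\ge (R/2)^2\cdot\#\{g:\|g\|_{\mathfrak F}\le R/2\}$. Writing $|B(r)|$ for this ball count, Corollary~\ref{cor-polvol} together with the doubling of $\mathbf F_{G,\mathfrak F}$ give $|B(R/2)|\asymp |B(R)|$, and hence $\|\zeta_R\|_2^2\gtrsim R^2\,|B(R)|$.

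For the Dirichlet form $\mathcal E_{G,\mu_H}(\zeta_R,\zeta_R)$, the inner sum $\sum_g|\zeta_R(gh)-\zeta_R(g)|^2$ is supported on $g\in B(R)\cup B(R)h^{-1}$, so it has at most $2|B(R)|$ non-zero terms. It therefore suffices to prove that
\[
\sum_h\mu_H(h)\bigl(\sup_g|\zeta_R(gh)-\zeta_R(g)|\bigr)^2\le CR^2/F_\star(R)^2.
\]
I would split the $h$-sum at the threshold $T:=F_\star(A_3R)/A$, with $A_3$ chosen so that for $|h|\le T$ the word $\theta_h$ provided by the hypothesis satisfies $F_\sigma^{-1}(\deg_\sigma(\theta_h))\le A_3R$ for every $\sigma$ (using the Case~1/Case~2 dichotomy together with $F_\star\le F_\sigma$). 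For these "small" $h$, Lemma~\ref{lem-smooth} gives $|\zeta_R(gh)-\zeta_R(g)|\le C\max_\sigma(R/F_\sigma(R))\deg_\sigma(\theta_h)$; the Case~2 indices contribute at most $CR/F_\star(R)$ uniformly in $h$, while each Case~1 index contributes $(R/F_\sigma(R))^2\sum_{|h|\le T}\mu_H(h)|h|^2$, which is $\asymp (R/F_\sigma(R))^2\,T^2/\Phi(T)$ by the same shell computation used in the proof of Proposition~\ref{pro-PS3}. For $|h|>T$, use the trivial bound $|\zeta_R(gh)-\zeta_R(g)|\le R$ combined with the tail estimate $\sum_{|h|>T}\mu_H(h)\asymp 1/\phi(T)$ from~\eqref{eq:feobaqz}. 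The $\mu_0$ case is a direct specialization in which only the Case~2 branch is present, since $\mu_0$ is finitely supported and $|h|$ is bounded on its support.

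The main obstacle is the arithmetic check that each of these three contributions is bounded by $CR^2/F_\star(R)^2$. The decisive input is the Case~1 hypothesis $AF_\sigma\circ F_\star^{-1}(\sqrt t)\ge \Phi^{-1}(t)$: evaluating at $t=F_\star(R)^2$ yields $\Phi(F_\sigma(R))\gtrsim F_\star(R)^2$, and a short manipulation using doubling and regular variation of $\Phi$ converts the Case~1 contribution $(R/F_\sigma(R))^2\,T^2/\Phi(T)$ into the required $R^2/F_\star(R)^2$ form. The large-$|h|$ tail is controlled using $\phi\ge\Phi$ on $[1,\infty)$ together with the same Case~1 inequality specialized to the smallest weight, giving $\phi(T)\gtrsim F_\star(R)^2$ and hence $R^2/\phi(T)\le CR^2/F_\star(R)^2$ as required.
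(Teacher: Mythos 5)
Your overall strategy (test function $\zeta_R$, lower bound $\|\zeta_R\|_2^2\gtrsim R^2\,\#B(R)$, reduction to estimating $\sum_h\mu_H(h)\sup_g|\zeta_R(gh)-\zeta_R(g)|^2$ via a small/large $|h|$ split, with Lemma~\ref{lem-smooth} on the small part and the tail estimate~\eqref{eq:feobaqz} on the large part) is exactly the paper's. The gap lies entirely in the choice of cut-off and the justification of the tail bound.

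You split at $T = F_\star(A_3R)/A \asymp F_\star(R)$, whereas the paper splits at $\rho = \Phi^{-1}(F_\star(R)^2)$. These are not comparable in general: if $\Phi$ is regularly varying of index $\beta<2$, then $\rho\asymp F_\star(R)^{2/\beta}\gg F_\star(R)\asymp T$. With your smaller threshold the tail contribution is $R^2\sum_{|h|>T}\mu_H(h)\asymp R^2/\phi(T)$, and to get the required $CR^2/F_\star(R)^2$ you need $\phi(T)\gtrsim F_\star(R)^2$, i.e.\ $\phi(u)\gtrsim u^2$ at $u=F_\star(R)$. You assert this by ``the Case~1 inequality specialized to the smallest weight'' -- but specialized to $s=\star$ the Case~1 inequality reads $A\sqrt t\ge\Phi^{-1}(t)$, equivalently $\Phi(u)\gtrsim u^2$, and this is simply not among the hypotheses. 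The lemma only asserts, for each $s$, \emph{either} the Case~1 inequality \emph{or} a uniform degree bound; the generator realizing $F_\star$ is precisely the one that typically falls in Case~2. Indeed, in the application within Theorem~\ref{th-polmu} one has $F_\star(t)\asymp\min\{t,\sqrt t\}$, so the Case~1 inequality for $\star$ would force $\Phi(u)\gtrsim u^2$, which fails whenever the index of $\Phi$ is strictly below $2$ -- the generic situation when $\phi_i$ has index in $(0,2)$. Concretely, if $F_\star(R)=\sqrt R$ and $\Phi(u)=u^\alpha$ with $\alpha\in(0,2)$, your tail gives $R^2/\phi(T)\asymp R^{2-\alpha/2}\gg R = R^2/F_\star(R)^2$.

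The paper avoids this by taking $\rho=\Phi^{-1}(F_\star(R)^2)$, which makes the tail bound trivial via $\phi(\rho)\ge\Phi(\rho)=F_\star(R)^2$. The price is that the small-$|h|$ piece needs a more careful check: one evaluates the Case~1 inequality at $t=F_\star(R)^2$ to get $\rho\le AF_\sigma(R)$, which simultaneously controls $F_\sigma^{-1}(\deg_\sigma(\theta))\lesssim R$ and yields $(R/\rho)^2\sum_{|h|\le\rho}|h|^2\mu_H(h)\lesssim(R/\rho)^2\cdot\rho^2/\Phi(\rho)=R^2/F_\star(R)^2$. You should replace your threshold by $\rho$ and rework the small-$|h|$ arithmetic accordingly; as it stands, the tail does not close.
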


\begin{proof}(1) We first consider $\mu_H$.
Let $W(R)$ be the cardinality of $\{g: \|g\|_{\mathfrak F} \le R\}$.  Since $\zeta_R$ is  at least $R/2$ over $\{\|g\|_{\mathfrak F}\le R/2\}$,
we have $\|\zeta_R\|_2^2\asymp R^2 W(R)$.
We now bound
$$\mathcal E_{G,\mu_H}(\zeta_R,\zeta_R)=\frac{1}{2}\sum _{g,h}|\zeta_R(gh)-\zeta_R(g)|^2\mu_H(h)$$ from above.  Let
$$\Omega=\{(g,h)\in G\times H: \zeta_R(gh)+\zeta_R(g)>0\}.$$
Obviously, the sum defining $\mathcal E_{G,\mu}(\zeta_R,\zeta_R)$ can be restricted to $\Omega$ and, for any given $h$,
$$\#\{g\in G: (g,h)\in \Omega\}\le 2 W(R).$$
We split this sum into two parts depending on
whether or not $|h|\ge \rho$ where $\rho $ will be chosen later, and write
\begin{align*}
\sum_{(g,h)\in \Omega: |h|\ge \rho}|\zeta_R(gh)-\zeta_R(g)|^2\mu_H(h) &\le   2R^2 W(R) \left(\sum_{|h|\ge \rho}\mu_H(h)\right)\\
&\le  CR^2W(R)/\phi(\rho)\le CR^2W(R)/\Phi(\rho),
\end{align*}
where we have used the simple fact that $\phi\ge \Phi$ in the last inequality, see Remark \ref{rem-Phi}. On the other hand, consider
pairs  $(g,h)\in \Omega$ with $|h|< \rho$ with
$\rho=\Phi^{-1}\left(F_\star(R)^2\right)$.  This choice of $\rho$
ensures that, for any $\sigma\in \Sigma$ such that $A F_\sigma\circ
F_\star^{-1}(\sqrt{t})\ge \Phi^{-1} (t)$   for all $t>1$,  we have
$\mbox{deg}_\sigma(\theta)\le  A \rho$. Thus,
$$F^{-1}_\sigma(\mbox{deg}_\sigma(\theta))\le A' F_\star^{-1}\left(\sqrt{\Phi (\rho)}\right)\le A'R$$
and
$$ \frac{R}{F_\sigma(R)}\le A'\frac{R}{\Phi^{-1}(F_\star(R)^2)}.$$
Since $(g,h)\in \Omega$, it follows that all $\|h\|_{\mathfrak
F}$, $\|g\|_{\mathfrak F}$ and $\|gh\|_{\mathfrak F}$ are smaller
than $A''R$ for some constant $A''>0$. Combining all the estimates above with the assumptions of the lemma (in particular, $F_\star$ is the smallest of the weight functions) and Lemma \ref{lem-smooth} yields that
$$
|\zeta_R(gh)-\zeta_R(g)| \le |\|gh\|_{\mathfrak F}-\|g\|_{\mathfrak F}| \le C_1\left(\frac{R}{F_\star(R)} + \frac{R}{\Phi^{-1}(F_\star(R)^2)}|h|\right).
$$
It follows that
\begin{align*}
\sum_{(g,h)\in \Omega:|h| <\rho}|\zeta_R(gh)&-\zeta_R(g)|^2\mu_H(h)\\ &\le  4C_1^2W(R)\left(\sum_{|h|<\rho} \left(   \frac{R^2}{F_\star(R)^2} + \frac{R^2}{[\Phi^{-1}(F_\star(R)^2)]^2}|h|^2            \right) \mu_H(h)\right)\\
&\le  4C_1^2R^2W(R)\left(\frac{1}{F_\star(R)^2} + \frac{1}{[\Phi^{-1}(F_\star(R)^2)]^2}\sum_{|h|\le \rho}|h|^2\mu_H(h)\right) \\
&\le   C_2 R^2 W(R)  \left(\frac{1}{F_\star(R)^2} + \frac{ \rho^2 }{\Phi(\rho) [\Phi^{-1}(F_\star(R)^2)]^2} \right)\\
&\le  \frac{C_2 R^2W(R)}{F_\star(R)^2}.
 \end{align*}

(2) We next consider $\mu_0$. In the case of $\mu_0$, for
$(g,h)\in \Omega$ , we have $|h|\le C$ (because $h$ is in the
support of $\mu_0$), and both $\|g\|_{\mathfrak F}$ and
$\|gh\|_{\mathfrak F}$ are smaller than $(1+C)R $ (because $(g,h)\in
\Omega$, $h$ is in the support of $\mu_0$ and $R\ge 1$) for some constant $C>0$. Thus, it
follows from Lemma \ref{lem-smooth} that
$$
|\zeta_R(gh)-\zeta_R(g)| \le |\|gh\|_{\mathfrak F}-\|g\|_{\mathfrak F}| \le C_1\left(\frac{R}{F_\star(R)} \right).
$$
Using this in computing $\mathcal E_{G,\mu_0}(\zeta_R,\zeta_R)$ yields
\begin{align*}
\sum_{(g,h)\in \Omega}|\zeta_R(gh)-\zeta_R(g)|^2\mu_0(h) &\le  4C_1^2W(R) \sum_h \frac{R^2}{F_\star(R)^2} \mu_0(h)\\
&\le \frac{2C_1 R^2W(R)}{F_\star(R)^2}.
 \end{align*} Combining all the estimates above yields the desired conclusion.
\end{proof}

\begin{theo} \label{th-polmu}
Let $G$ be a group having polynomial volume growth and $\mu $ be in
$\mathcal P_{\preceq}(G,\mbox{\em reg})$. Let  $H_i,\phi_i,\Phi_i$ as in {\em
Definitions
 \ref{def-Preg}} and $\ref{def-PregPhi}$.
Referring to the notation of {\em Definitions \ref{DefxiG}}, $\ref{DefxiN}$ and  $\ref{def-FG}$,
 set $\mathbf F=\mathbf F_{G,\mathfrak F_G}$.
 Then, we have
$$\Lambda_{2,G,\mu}(v)\asymp 1/\mathbf F^{-1} (v), \;\;\;\mu^{(n)}(e)\asymp 1/ \mathbf F(n).$$
\end{theo}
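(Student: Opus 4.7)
The upper bounds $\Lambda_{2,G,\mu}(v)\succeq 1/\mathbf F^{-1}(v)$ and $\mu^{(n)}(e)\preceq 1/\mathbf F(n)$ are already established in Corollary \ref{cor-polmu1}, so the remaining task is the matching lower bounds. By the spectral-profile/heat-kernel equivalence recalled in Section \ref{sec-LambdaPhi}, it suffices to prove $\Lambda_{2,G,\mu}(v)\preceq 1/\mathbf F^{-1}(v)$, which I plan to establish by exhibiting, for each $R\ge 1$, a test function $\zeta_R$ whose support has cardinality $\asymp \mathbf F(R)$ and whose Rayleigh quotient $\mathcal E_{G,\mu}(\zeta_R,\zeta_R)/\|\zeta_R\|_2^2$ is bounded by a constant multiple of $1/R$. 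This is the strategy advertised at the start of Section \ref{sec-low}, with Lemmas \ref{lem-smooth} and \ref{lem-Testzeta} as the main tools.

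The first step is to replace $(\Sigma_G,\mathfrak F_G)$ by the smooth modification $(\Sigma_G,\mathfrak F_1)$ supplied by Proposition \ref{pro-variation}(b), choosing $w^*$ larger than all regular variation indices of the $F_{G,s}$; then each $F_{1,s}$ is $\mathcal C^1$, of smooth regular variation of index strictly less than $1$, and $\|g\|_{\mathfrak F_G}\asymp \|g\|_{\mathfrak F_1}^{1/w^*}$. I would take $\zeta_R(g)=(R-\|g\|_{\mathfrak F_1})_+$; Corollary \ref{cor-polvol} then yields $\|\zeta_R\|_2^2\asymp R^2\mathbf F(R^{1/w^*})$, the support having cardinality $\asymp \mathbf F(R^{1/w^*})$. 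Writing $\mu=\sum_i p_i\mu_i$ and using $\mathcal E_{G,\mu}=\sum p_i\mathcal E_{G,\mu_i}$, the Dirichlet form bound reduces to applying Lemma \ref{lem-Testzeta} separately to each $\mu_i$.

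The main obstacle is verifying the hypothesis of Lemma \ref{lem-Testzeta} for each $\mu_i$ with $i\ge 1$. For $h\in H_i$, I would construct the required word $\theta_h$ over $\Sigma_G\cup\Sigma_G^{-1}$ from the coset decomposition $h=nu$, with $n\in N_i=H_i\cap N$ (a finite index subgroup of $H_i$ by Remark \ref{rem-index}) written as a word over $S_i$ of length $\le C|h|_i$ and $u$ a bounded length word over $S_0$. The resulting degrees satisfy $\mbox{deg}_s(\theta_h)\le C|h|_i$ for $s\in\Sigma_G(i)$ and $\mbox{deg}_s(\theta_h)\le C$ for the remaining letters. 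The analytic half of the dichotomy, $AF_{1,s}\circ F_\star^{-1}(\sqrt t)\ge\Phi_i^{-1}(t)$ for $s\in\Sigma_G(i)$, unwinds via $F_{1,s}\asymp F_{G,s}\circ F^{-1}$ to $AF_{G,s}\circ F_{G,\star}^{-1}(\sqrt t)\ge\Phi_i^{-1}(t)$, and is satisfied once $F_{G,\star}\asymp\Phi_0^{-1}\asymp\sqrt{\cdot}$; this last equivalence I would enforce by choosing $S_0$ to contain at least one generator belonging to no $S_j$ with $j\ge 1$.

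Granting these inputs, Lemma \ref{lem-Testzeta} yields $\mathcal E_{G,\mu_i}(\zeta_R,\zeta_R)\le C_i\|\zeta_R\|_2^2/F_\star(R)^2$ for every $i$, and summing gives the same bound for $\mathcal E_{G,\mu}$. Using $F_\star(R)\asymp F_{G,\star}(R^{1/w^*})\asymp R^{1/(2w^*)}$ and setting $v=\mathbf F(R^{1/w^*})$ so that $\mathbf F^{-1}(v)=R^{1/w^*}$ and $F_\star(R)^2\asymp \mathbf F^{-1}(v)$, one obtains $\Lambda_{2,G,\mu}(v)\preceq 1/\mathbf F^{-1}(v)$, which is the desired lower bound and completes the proof.
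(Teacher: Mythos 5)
Your proposal is correct and follows essentially the same route as the paper's proof: reduce the lower bound on $\mu^{(n)}(e)$ to showing $\Lambda_{2,G,\mu}(v)\preceq 1/\mathbf F^{-1}(v)$, pass to the smoothed system $\mathfrak F_1$ via Proposition \ref{pro-variation}(b), take $\zeta_R=(R-\|\cdot\|_{\mathfrak F_1})_+$, verify the hypotheses of Lemma \ref{lem-Testzeta} for each $\mu_i$ using the coset decomposition of $H_i$ relative to $N_i=H_i\cap N$, and then convert the Rayleigh-quotient bound into the spectral-profile estimate via Corollary \ref{cor-polvol}. All of these steps appear in the paper's argument in the same order and with the same tools.

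One small but useful difference: the paper asserts $F_\star(t)\asymp\min\{t,\sqrt t\}$ citing only the definition of $\Phi_0$ and Remark \ref{rem-Phi}, while you explicitly observe that this requires some $s\in\Sigma_G$ with $F_{G,s}=\Phi_0^{-1}$, which you guarantee by choosing $S_0$ to contain a generator not lying in any $S_j$ for $j\ge 1$. That point is genuinely needed for the chain of inequalities $F_{G,s}\circ F_{G,\star}^{-1}(\sqrt t)\succeq F_{G,s}(t)\ge\Phi_i^{-1}(t)$ and is a worthwhile clarification, not a deviation. (A minor imprecision: the degree bound $\mathrm{deg}_s(\theta_h)\le C|h|_i$ should be stated for all $s\in S_i$, not just $s\in\Sigma_G(i)$, though this does not affect the argument since the word $\theta_h$ is built from $S_i\cup S_0$ anyway.)
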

\begin{proof}
By the results mentioned in Section \ref{sec-LambdaPhi}, it suffices
to prove that $$\Lambda_{2,G,\mu} (v)\asymp 1/\mathbf F^{-1}(v).$$
Corollary \ref{cor-polmu1} provides the lower bound
$\Lambda_{2,G,\mu}(v)\succeq 1/\mathbf F^{-1}(v)$ so it suffices to
prove the upper bound.  Consider the weight function system
associated with $(\Sigma_G,\mathfrak F_G)$, where $\mathfrak
F_G=\{F_s, s\in \Sigma_G\}$ is as in Definition  \ref{DefxiG}. Let
$\weight (s)$ be the positive index of the increasing regularly varying
function $F_s$.  By Proposition \ref{pro-variation}, there are
functions $F_{1,s}\in \mathcal C^1([0,\infty))$ for all $s \in
\Sigma_G $, positive, increasing and of smooth regular variation of
index strictly less than $1$ such that $F^{-1}_{1,s}\in \mathcal
C^1([0,\infty))$ and $F_{1,s} \asymp F_s\circ F^{-1}$, where
$F(t)=(1+t)^{\weight^*}-1$ and $\weight^*$ is chosen so that $\weight^*> \max\{\weight (s):
s\in \Sigma_G\}$.  
These functions satisfy (\ref{Fder}).   Let
$(\Sigma_G,\mathfrak F_{G,1})$ be the associated weight function
system and recall that, by construction, 
$$\|g\|_{\mathfrak F_G}\asymp \|g\|_{\mathfrak F_{G,1}}^{1/\weight^*}.$$

Let $\zeta_R(g)=(R-\|g\|_{\mathfrak F_{G,1}})_+$.
For each $H_i$, $\mu_i$, $1\le i\le k$, and $s\in  S_i$  (see Definition \ref{DefxiG}), we  have
$$ F_{1,s}\circ F^{-1}_{1,\star} (\sqrt{t}) \asymp  F_{s}\circ F^{-1}_{\star} (\sqrt{t}) \ge (1/A) F_s(t) \ge (1/A) \Phi^{-1}_i(t).$$
Here we have used the simple fact  that, because of the definition of the system $\mathfrak F_G$ based on the function $\Phi_i$,
$F_\star (t) \asymp  \min\{t,\sqrt{t}\}$  (See Remark \ref{rem-Phi} and the definition of $\Phi_0$).
Further, since $N_i=H_i\cap N$ is of finite index in $H_i$ (See Remark \ref{rem-index}). Hence, for any $h\in H_i$, there is a word
$\theta:=\theta_h\in  \cup_{p=0}^\infty (\Sigma _G\cup \Sigma_G^{-1})^p$ such that $h=\theta $ in $G$ and,  for each $s\in \Sigma_G\setminus S_i$,
$ \mbox{deg}_s(\theta) \le A. $

It follows that we can apply Lemma \ref{lem-Testzeta}  to $\mu_i$
and $\mathfrak F_{1,G}$. This gives that for $R\ge 1$, 
$$\frac{\mathcal E_{G,\mu_i}(\zeta_R,\zeta_R)}{\|\zeta_R\|_2^2 }\le\frac{C}{F_{1,\star} (R)^2}$$
with $F_{1,\star}(t) \asymp {t}^{1/(2\weight ^*)}$ for all $t\ge1$; that is,
$$\frac{\mathcal E_{G,\mu_i}(\zeta_R,\zeta_R)}{\|\zeta_R\|_2^2 }\le\frac{C}{R^{1/\weight^*}}
\quad \hbox{for every } R\ge 1.$$
This holds for all $i=1,\dots,k$, and also for $\mu_0$. Hence it also holds for the convex combination
 $\mu\in \mathcal P_{\preceq}(G,\mbox{reg})$;
that is,
$$\frac{\mathcal E_{G,\mu}(\zeta_R,\zeta_R)}{\|\zeta_R\|_2^2}\le\frac{C}{R^{1/\weight^*}}
\quad \hbox{for every } R\ge 1.$$
Now,  $\zeta_R$ is supported in  $\{\|g\|_{\mathfrak F_{G,1}}<R\}$
which has volume
$$\# \{\|g\|_{\mathfrak F_{G,1}}\le R\} \asymp \#\{\|g\|_{\mathfrak F_G}^{\weight ^*}\le R\} \asymp  \mathbf F(R^{1/\weight^*}),$$ thanks to Corollary \ref{cor-polvol}.
It follows that
$$\forall \, v\ge 1,\;\;\Lambda_{2,G,\mu}(v)\preceq \frac{1}{\mathbf F^{-1}(v)}.$$
The proof is complete.\end{proof}

\begin{rem} Even in the simplest case when $G$ is nilpotent, each $H_i$ is a one parameter subgroup
$H_i=\<s_i\>\subseteq G$
and the measure $\mu_i$  satisfies $\mu_i(h) \asymp
(1+|h|)^{-1-\alpha_i}$ with $\alpha_i>0$, $h=s_i^n$ and $n\in
\mathbb Z$, (i.e., the basic case studied in \cite{SCZ-nil}),
Theorem \ref{th-polmu} provides new results, since \cite{SCZ-nil}
gives complete two sided bounds only  in the case when all
$\alpha_i$ are in  the interval $(0,2)$ or all $\alpha_i$ are equal
to $2$.
\end{rem}

\begin{exa} We return again to the Heisenberg example (Example \ref{exa-Heisenberg}), keeping the same notation.
We let $H_i=\<s_i\>$ (the subgroup generated by $s_i$) and
$\phi_i(t)=(1+t)^{\alpha_i}$ with $\alpha_i>0$ for $1\le i\le 3$.
Let $\mu=\sum_{i=1}^3\mu_i$. Obviously, $d_i=1$ and, for $t\ge 1$,
$$\Phi_i(t) \asymp \left\{\begin{array}{cl}  t^{\alpha_i} &\mbox{ if } \alpha_i\in (0,2),\\
t^2 /\log (1+ t)   &\mbox{ if } \alpha_i=2,\\
t^2 &\mbox{ if }  \alpha_i>2.\end{array}\right.$$
Introduce the two-coordinate weight system
$$\weight_i=(\weight_{i,1},\weight_{i,2})=\left\{\begin{array}{cl} (1/\tilde{\alpha}_i, 0) & \mbox{ if } \alpha_i\neq 2, \\
 (1/2, 1/2)& \mbox{ if } \alpha_i=2.  \end{array}\right.$$
Note that the natural order over the functions  $F_{s_i}=\Phi_i^{-1}$ (in a neighborhood of infinity) is the same as the lexicographical order over the
weights $\weight_i=(\weight_{i,1},\weight_{i,2})$ and that
$$ \mathbf F (t)\asymp
\left\{\begin{array}{cl}  t^{\weight_{1,1}+\weight_{2,1}+\weight_{3,1}} [\log (1+ t )]^{\weight_{1,2}+\weight_{2,2}+\weight_{3,2}}&\mbox{ if } \weight_3\ge \weight_1+\weight_2,\\
 t^{2 (\weight_{1,1}+\weight_{2,1})} [\log(1+t)]^{2(\weight_{1,2}+\weight_{2,2})}& \mbox{ if }  \weight_3\le \weight_1+\weight_2.
\end{array}\right.$$
Theorem \ref{th-polmu} tells us that
$$\Lambda_{2,\mathbb H(3,\mathbb Z),\mu}(v) \asymp \frac{1}{\mathbf F^{-1}(v)},\;\;\;\mu^{(n)}(e)\asymp \frac{1}{\mathbf F(n)}.$$
Next consider the case when $\phi_i(t)=(1+t)^{2}[\log
(2+t)]^{\beta_i}$ with $\beta_i\in \mathbb R$ for $i=1,2$, and
$\phi_3(t)=(1+t)[\log (2+t)]^{\beta_3}$ with $\beta_3\in \mathbb R$.
In this case, for $t\ge 1$ and $i=1,2$,
$$\Phi_i(t) \asymp \left\{\begin{array}{cl} t^2 \log (2+t)]^{\beta_i -1} & \mbox{ if } \beta_i < 1,\\
t^2 /\log\log(4+t)  & \mbox{ if } \beta_i=1,\\
t^2 &\mbox{ if }  \beta_i>1,\end{array}\right.$$
and
$$ \forall\,t\ge 1,\;\;  \Phi_3(t)\asymp t [\log (2+t)]^{\beta_3}.$$
We need to compare  $\Phi^{-1}_1\Phi^{-1}_2$ to $\Phi^{-1}_3$ over $(1,\infty)$.

Assume first that $\beta_1,\beta_2\in (-\infty,1)$ so that
$$\Phi_1^{-1}(t)\Phi_2^{-1}(t)\asymp  t [\log(2+t)]^{ -(\beta_1+\beta_2-2 )/2}$$ and $ \Phi^{-1}_3(t) \asymp  t[\log (2+t)]^{-\beta_3}$.
When $\frac{1}{2}(\beta_1+\beta_2-2)> \beta_3$, the function
$\Phi_3^{-1}$ dominates and the volume function  $\mathbf F$
associated with the weight function system $F_{s_i}=\Phi_i^{-1}$ is
the product $ \mathbf F=\Phi_1^{-1}\Phi_2^{-1}\Phi^{-1}_3$.   If, instead, $\frac{1}{2}(\beta_1+\beta_2-2)\le  \beta_3$,
then we have $\mathbf F= (\Phi^{-1}_1\Phi_2^{-1} )^2$.

In the case when
$\beta_1,\beta_2\ge 1$,  one can check that
$$\mathbf F\asymp \left\{ \begin{array}{cl} \Phi_1^{-1}\Phi_2^{-1}\Phi^{-1}_3  & \mbox{ if } \beta_3 < 0,
 \\
  (\Phi^{-1}_1\Phi_2^{-1})^2 & \mbox{ if } \beta_3\ge0.
  \end{array}\right.$$
  In all cases (including those not discussed explicitly above),
  $$\mathbf F= \Phi_1^{-1}\Phi_{2}^{-1}\max\{ \Phi_3^{-1},\Phi_1^{-1}\Phi_2^{-1}\}.$$
\end{exa}

The following examples illustrate some of the subtleties related to the treatment of groups of polynomial growth that are not nilpotent.

\begin{exa}[Infinite dihedral group]
Recall that the infinite dihedral group $\mathbf D$ can be presented as $\<u,v: u^2,v^2\>$.  This means it is the quotient of the free group $\mathbf F(\mathbf u,
\mathbf v)$
with two generators by the normal subgroup of $\mathbf F(\mathbf u,\mathbf v)$ generated  by $\mathbf u^2,\mathbf v^2$ (and all the conjugates of these).  Obviously,  the images  $u,v$ of $\mathbf u, \mathbf v$ in the quotient, $\mathbf D$,  satisfy $u^2=v^2=e$.
Since $\{u,v\}$ generates $\mathbf D$, any element $g$ can be written uniquely as $g=(uv)^n u^{{\eps} }$ with $n\in \mathbb Z$ and ${\eps} \in \{0,1\}$ and also as $g=(uv)^m v^{\eta}$ with $m\in \mathbb Z$ and $\eta\in \{0,1\}$.   For instance   $vu= (uv)^{-1}$.  The Cayley graph  looks like this:

\begin{figure}[h]
\begin{center}\caption{Dihedral  group:  $u$ in blue, $v$ in red.}\label{D1}
\begin{picture}(300,50)(0,-10)

\multiput(254,0)(-15,0){15}{\circle*{3}}

{\color{blue}
\multiput(251,0)(-30,0){8}{\line(-1,0){10}}
}

{\color{red}
\multiput(263,0)(-30,0){8}{\line(-1,0){10}}
}

\end{picture}\end{center}\end{figure}

This group is not nilpotent  because the commutator of  $(uv)^n$ by $u$ is $(uv)^{2n}$. Let $z=uv$ and $Z=\<z\>$, the subgroup generated by $z$. Since $z$ has infinite order, this is a copy of $\mathbb Z$. It is also a normal subgroup with quotient the group with two elements $\{0,1\}$.  From what we said before, any element $g$ can be written uniquely $g=t^n u^{{\eps} }$ and this gives a description of $\mathbf D$ as the semi-direct product
$$
\mathbf D=Z\rtimes \<u\>.
$$

Obviously (from the picture) $\mathbf D$ has volume growth of degree $1$. The subgroup $Z$ is abelian  (hence, nilpotent) with finite index.
Even so this example is relatively trivial, it already illustrates significant differences with the nilpotent case.

For instance, it is easy to see that Theorem 3.9 does not apply as stated to this group.
 Indeed, take  $\mu=(1/2)(\mu_1+\mu_2)\in \mathcal P(\mathbf D,\mbox{reg})$ where
$\mu_1$, $\mu_2$ are associated to $\phi_1(s) =s^{\alpha_1}$, $\phi_2(s)=s^{\alpha_2}$ on the subgroups $\langle u\rangle,\langle v\rangle$, respectively.
Assume that $0< \alpha_1\le \alpha_2<2$.

On the one hand, since $u,v$ have order $2$, it is obvious that this is just a weighted version of simple random walk on $\Delta$ with generators  $\{u,v\}$.
On the other hand, we can consider the norm $\|\cdot\|$ associated with the system $\{u,v\}$,  $F_u(s)=(1+s)^{1/\alpha_1}-1$, $F_u(v)=(1+s)^{1/\alpha_2}-1$.
That would be the norm used in Theorem 3.9 if $\Delta$ was nilpotent. Obviously,  the element $g=(uv)^nu^{{\eps} }$ as norm 
$\|g\|\asymp (|{\eps} |+|n|)^{\alpha_2}.$  If they where applicable, Theorems 2.10 and 3.9 would say $\mu^{(2n)}(e)\asymp n^{-1/\alpha_2}$ whereas, obviously, $\mu^{(2n)}(e)\asymp n^{-1/2}$.

Why is it the case that such situation does not appear in the nilpotent case treated in Theorems 2.10 and  3.9?   The reason is algebraic, namely, in any finitely  generated nilpotent group, the torsion elements form a {\em finite} normal subgroup and torsion elements  cannot play a significant role in computing the type of length considered in Theorem 2.10.  Obviously, the dihedral group $\mathbf D$ is very different from this view point.

\end{exa}

\begin{exa}  The goal of this example is to  construct a multi-dimensional version of the dihedral group above which will illustrate how our results apply to groups of polynomial growth that are not nilpotent. Let  $\Delta$ be generated by
$s,s',t,t'$ which are all of order two (involutions) and which satisfy the following commutation relations
$$ tt'=t't,\; ss'=s's,\; s't'=t's' ,\; stst'=st'st,\; sts't=s'tst,\; st's't=s'tst'.$$
Technically, this means that $\Delta$ is the quotient of the free group generated by four letters $s,s',t,t'$  by the normal subgroup generated by the indicated relations which are all commutation relations between two elements:  $t$ and $t'$ commute ($[t,t']=1$), $s$ and $s'$ commute ($[s,s']=1$), $s'$ and $t'$ commute ($[s',t']=1$), $st$ and $st'$ commutes ($[st,st']=1$), $st$ and $s't$ commute ($[st,s't]=1$), and $st'$ and $s't$ commutes ($[st',s't]=1$).
The next lemma gives a concrete description of this group.
But in the present case  it is possible to obtain a good picture of what happens.

There are three homomorphisms onto the Dihedral group $\mathbf D$, $\psi_1,\psi_2,\psi_3$, obtained by sending
the pair $(s',t)$ (resp. $(s,t)$, $(s,t')$) to $(u,v)$ and all other generators to the identity.   For instance $\psi_1$ is obtained by looking at the homomorphism
$\psi$ from the free group $\mathbf F_4$ (with four generators $\mathbf t,\mathbf t,\mathbf s,\mathbf s'$)  onto $\mathbf D$ which send $\mathbf s'$ to $u$, $\mathbf t$ to $v$, and the other generators to the identity. By construction, the kernel of this homomorphism  contains  the defining kernel  $N_\Delta$ of the group  $\Delta=\mathbf F_4/N_\Delta$.  It follows that $\psi$ descends to an surjective homomorphism  $\psi_1:\Delta \longrightarrow \mathbf D$.
We get one such homomorphism for each pair $(s',t)$, $(s,t)$ and $(s,t')$.  This proves that the commuting elements $st,st'$ and $s't$
each have infinite order.  There are also three copies of the dihedral group
$\<s',t\>$, $\<s,t\> $ and $\<s,t'\>$ in $\Delta$.

\begin{lem} Any element $g$ of $\Delta$ can be represented uniquely in the form
$$g= (s't)^{n_1}(st)^{n_2}(st')^{n_3}s^{\eps} ,\;\; {\eps}  \in \{0,1\},\;\;n_1,n_2,n_3\in \mathbb Z$$
and  the commuting elements  $s't,st,st'$ generate a copy of  $\mathbb Z^3$ as a subgroup of $\Delta$.
\end{lem}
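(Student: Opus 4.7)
The plan is to combine a rewriting argument for the existence of the normal form with a separating-homomorphism argument for its uniqueness and for the freeness of $A:=\langle s't, st, st'\rangle$. Throughout, set $a=s't$, $b=st$, $c=st'$.

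First, the three four-letter identities $stst'=st'st$, $sts't=s'tst$, $st's't=s'tst'$ in the presentation of $\Delta$ say precisely $[b,c]=[a,b]=[a,c]=1$, so $A$ is a quotient of $\mathbb Z^3$ and $a^{n_1}b^{n_2}c^{n_3}$ is a well-defined element of $\Delta$ for all $n_i\in \mathbb Z$. Since $t=s^{-1}b$, $t'=s^{-1}c$, and $s'=at^{-1}$, every generator of $\Delta$ already lies in $A\cup As$.

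Second, I would show that $s$ normalizes $A$, so that the union $A\cup As$ is closed under multiplication and therefore equals $\Delta$. The inversions $sbs^{-1}=s(st)s=ts=b^{-1}$ and analogously $scs^{-1}=c^{-1}$ are immediate from the involution relations. The more delicate case $sas^{-1}$ is handled by regrouping $s\cdot s't\cdot s=(ss')(ts)$, rewriting with $[s,s']=1$ as $s'\cdot (sts)=s'\cdot bs$, and then combining the computation $s'b=(s's)t=(ss')t=sa$ with the identity $ss'=ab^{-1}$ (immediate from $ab^{-1}=s't\cdot ts=s's$) to place $sas^{-1}$ inside $A$. Applied inductively as a rewriting system, these rules reduce any word in $\{s,s',t,t'\}$ to the claimed form $a^{n_1}b^{n_2}c^{n_3}s^{\epsilon}$.

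Third, for uniqueness, and simultaneously for the isomorphism $A\cong \mathbb Z^3$, I invoke the three homomorphisms $\psi_i:\Delta \to \mathbf D$ constructed in the discussion preceding the lemma. Given a hypothetical relation $a^{n_1}b^{n_2}c^{n_3}s^{\epsilon}=e$, I first pass to the quotient $\Delta/A\cong \mathbb Z/2$, in which all four involutions coincide with the nontrivial element, to deduce $\epsilon=0$. Applying $\psi_i$ then sends $a^{n_1}b^{n_2}c^{n_3}$ to $(uv)^{n_i}$ in $\mathbf D$ (the other two coordinates die), which can equal $e$ only when $n_i=0$ since $uv$ has infinite order in $\mathbf D$. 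Running $i=1,2,3$ in turn forces $n_1=n_2=n_3=0$.

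The principal obstacle is the verification that the three partial assignments actually extend to group homomorphisms $\psi_i$, i.e.\ that every defining relation of $\Delta$ lies in the kernel of the corresponding map from $\mathbf F_4$ to $\mathbf D$; this is where the four-letter commutations need a careful case check, and where one must choose the images of the ``trivialized'' generators with some care. A secondary, essentially bookkeeping obstacle is the explicit simplification of $sas^{-1}$ inside $A$, where one has to use both the pair commutations like $[s,s']=1$ and the three four-letter commutations simultaneously.
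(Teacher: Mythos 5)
Your proof follows the paper's own outline quite closely. For existence, the paper runs an induction on word length to show that $gs,gs',gt,gt'$ stay in normal form whenever $g$ does; your reformulation (``$s$ normalizes $A$, so $A\cup As$ is a subgroup containing the generators'') is the same fact packaged structurally, and your computation $sas^{-1}=(s's)(ts)=ab^{-2}$ (the clean route, which avoids the circular-looking detour through $s'b=sa$) is essentially what the paper's induction does implicitly. For uniqueness, both you and the paper use the three dihedral-valued maps $\psi_1,\psi_2,\psi_3$; your preliminary reduction to $\epsilon=0$ via $\Delta/A\cong\mathbb Z/2$ is a small simplification of the paper's ``difference of two normal forms'' manipulation.

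Two points deserve comment, one minor and one serious. The minor one: the claim that ``the other two coordinates die'' under $\psi_i$ is not accurate. With $\psi_1$ sending $s'\mapsto u$, $t\mapsto v$, $s,t'\mapsto e$, one has $\psi_1(st)=v\ne e$, so only $c=st'$ is killed; $\psi_1(a^{n_1}b^{n_2}c^{n_3})=(uv)^{n_1}v^{n_2}$, and one has to track the $v^{n_2}$ factor. This is exactly what the paper's analysis of $v^{n_2'}(uv)^{n_1-n_1'}v^{n_2}=1$ in $\mathbf D$ does; $\psi_1,\psi_3$ first give $n_1=n_3=0$, and only then does $\psi_2$ pin down $n_2$. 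The serious point is the one you yourself flag as the ``principal obstacle,'' namely that the partial assignments extend to homomorphisms $\psi_i:\Delta\to\mathbf D$. Checking this against the stated relators, one finds that $\psi_1$ sends the relator $[st,s't]$ to $[v,uv]=(uv)^{-2}\ne e$ in $\mathbf D$, so $\psi_1$ does not descend to $\Delta$. Pressing further on the presentation as written, one computes (using only $s^2=t^2=e$, $[s,s']=1$ and the commutations among $a=s't$, $b=st$, $c=st'$) that $s'^2=a^2b^{-2}$, $tt'(t't)^{-1}=b^{-2}c^2$ and $s't'(t's')^{-1}=c^2$; so the relators $s'^2$, $[t,t']$, $[s',t']$ force $a^2=b^2=c^2=e$, and $\langle a,b,c\rangle$ cannot be $\mathbb Z^3$. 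The obstacle you postpone is therefore not a bookkeeping exercise but a genuine failure of the argument as written; it reflects an inconsistency in the stated presentation of $\Delta$ rather than a flaw unique to your reconstruction, but it does mean the uniqueness half of the proof cannot be completed along this route.
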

\begin{proof}
Existence of $(n_1,n_2,n_3,{\eps} )$ can be proved by induction on the length of words on the generators $s,s',t,t'$. For this, it suffices to check
that if $g$ has the desired form, then we can also write $gs,gs',gt,gt'$ in the desired form. This is easy for  $s$ (!) and $t'$.  If ${\eps} =1$, it is also easy to move $t$ through since $st$ commutes with $st'$.  If ${\eps} =0$, move $t$ by writing $t=(ts)s $.  Finally, $s'$ commutes with $s$ and $t'$  so that
\begin{align*}
gs' &=(s't)^{n_1}(st)^{n_2}s' (st')^{n_3}s^{{\eps} } = (s't)^{n_1}(st)^{n_2}(s't) (ts)s (st')^{n_3}s^{{\eps} }\\
&= (s't)^{n_1+1}(st)^{n_2-1}(st')^{n'_3}s^{{\eps} '},
\end{align*}
where, on the right, we have used the fact that $ s(st')^{n_3}s^{\eps} =(st')^{n'_3}s^{{\eps} '}$  since any element in $<s,t'>$ can be written in that form (of course, one can also compute explicitly $(n'_3,{\eps} ')$ as a function of $(n_3,{\eps} )$).

To prove uniqueness,  assume an element $g$ in $\Delta$ can be written in two different ways $(n_1,n_2,n_3,{\eps} )$ and $(n_1',n_2',n_3',{\eps} ')$ or, equivalently,
$$s^{{\eps} '} (s't)^{n_1-n_1'}(st)^{n_2-n_2'}(st')^{n_3-n_3'}s^{\eps} =1   \mbox{ in } \Delta.$$
We consider the images of this by $\psi_1,\psi_2,\psi_3$.   It will be useful to note that in $\mathbf D=\langle u,v\rangle$,
$u^\eta (uv)^k u^{\eta'}=1$ implies $k=0$ and also $v^\eta (uv)^k v^{\eta'}=1$ implies $k=0$.
Taking the image by $\psi _1$ gives
$$ v^{n'_2}(uv)^{n_1-n'_1}v^{n_2}= 1 \mbox{ in the dihedral group}.  $$
This implies $n_1=n'_1$.  Taking the image by $\psi_2$ gives
$$ u^{{\eps} '}(uv)^{n_2-n'_2} u^{n_3-n'_3+{\eps} }=1$$
in the dihedral group which  implies $n_2=n'_2$. Finally, taking the image by $\psi_3$ gives
$$u^{{\eps} ' +n_2-n'_2}(uv)^{n_3-n'_3}u^{{\eps} }=1$$
in the dihedral group which implies $n_3=n'_3$. Obviously, we also must  have ${\eps} ={\eps} '$.
\end{proof}

Next we show that this writing  is (almost) minimizing the degree of each of the generator $s,t,s',t'$ in a word representing $g$.
For this, we first need to observe that  for each element $w$ in the dihedral group $\<u,v: u^2,v^2\>$, there exists a smallest $n\ge 0$ such that
$w= \word_w$ with $\word_w=(uv)^{n}$ or $\word_w=(uv)^{-n}$ or $\word_w=(uv)^n u$ or $\word_w=(uv)^{-n} v$. Moreover, if $\word$ is any word over $\{u,v\}$ such that $w=\word$ then we have
\begin{equation}\label{degD}
\mbox{deg}_x(\word)\ge \mbox{deg}_x(\word_w),\;\;x=u,v.
\end{equation}
This fact is nothing difficult. Each element of $\mathbf D$  is, uniquely and   minimally,   a product iterating between $u$ and $v$. The four cases mentioned above are exactly the cases when $w$  starts with $u$ and finishes with $v$, starts with $v$ and finishes with $u$, starts with $u$ and finishes with $u$, and starts with $v$ and finishes with $v$.

Now, for any word $\word$ over $\{s,t,s',t'\}$ representing $g=(n_1,n_2,n_3,{\eps} )$ and $i=1,2,3$, let $\bar{\psi} _i(\word)$ be the word obtained in an obvious way by cancelling those letters that are sent to the identity by $\psi_i$.  Note that the word $\bar{\psi}_i(\word)$ represents the element
$\psi_i(g)$ in the dihedral group.  For $x=s,t,s',t'$,
we have $$\mbox{deg}_{x}(\word)\ge \mbox{deg}_{\psi_i(x)}(\bar{\psi}_i(\word))\ge |n_i| - 1$$
because, for instance,  $\psi_2(g)= t^{n_1}(st)^{n_2}s^{n_3+{\eps} }=t^{\eta_1}(st)^{n_2}s^{\eta_2}$ with $\eta_1,\eta_2\in \{0,1\}$. Compare with (\ref{degD})
to obtain the desired result.
Note that the previous inequality  is optimal since in order to write $t$ in the form $(n_1,n_2,n_3,{\eps} )$ we have to write $t=tss=(0,-1,0,1)$.

Let
$$\theta_1=s't,\;\theta_2= st,\; \theta_3=st'$$ be the generators of the subgroup
 $\mathbb Z^3= \<\theta_1,\theta_2,\theta_3 \>$ in $\Delta$.
Let
$$
H_1=\<s',t\>,\; H_2=\<s,t\>,\; H_3=\<s,t'\>
$$  be the three copies of the dihedral group. Pick
$$\alpha_1,\alpha_2,\alpha_3 \in (0,2) \mbox{ and } \phi_i(t)=(1+t)^{\alpha_i}.$$
 Let $\mu_i$ be a measure supported on $H_i$ with $\mu_i(h)\asymp  [(1+|h|_i)\phi_i(1+|h|_i)]^{-1}$ and
 $$\mu=(1/3)\sum_1^2\mu_i   \in \mathcal P(\Delta,\mbox{reg}).$$

If $\Delta$ were
nilpotent (it is not!), we would obtain an appropriate geometry by  setting  $\Sigma=\{s,s',t,t'\}$, $\weight (\sigma)=\max \{ 1/\alpha_i : i\in \{j: \sigma\in H_i\}\}$, and $F_\sigma(t)=(1+t)^{\weight (\sigma)}-1$.  (See Definition \ref{def-quasinorm}).
Here we have picked a generating set for each $H_i$, assigned the weight $1/\alpha_i$ to the generators coming from $H_i$, reduced to $\Sigma$ to avoid repetition and picked the highest available weight for each $\sigma\in \Sigma$.
We can compute the associated geometry.  We have
$$\weight (s)= \max \{1/\alpha_2,1/\alpha_3\}, \;\;\weight (t)=\max \{1/\alpha_1,1/\alpha_2\},$$
and  $\weight (s')=1/\alpha_1$, $\weight (t')=1/\alpha_3$.   Set
\begin{align*}
\weight_1&=\min\{\weight (s'),\weight (t)\}=1/\alpha_1,\\
\weight_2&=\min\{ \weight (s),\weight (t)\}=\min \{\max \{1/\alpha_2,1/\alpha_3\},\max \{1/\alpha_1,1/\alpha_2\}\},\\
\weight_3&=\min\{\weight (s),\weight (t')\}=1/\alpha_3. \end{align*}
Note that $\weight_2=1/\alpha_2$ if and only if $\alpha_2\le \max \{\alpha_1,\alpha_3\}.$
By inspection, using the lower bound on degrees derived above in terms of the $|n_i|$, we find that if $g$ is represented by $(n_1,n_2,n_3,{\eps} )$, we have
$$  \|g\|_{\mathfrak F}\asymp \max\{ |n_1|^{1/\weight_1},|n_2|^{1/\weight_2},|n_3|^{1/\weight_3},|{\eps} |\}.$$
Accordingly, $$\#\{\|g\|_{\mathfrak F}\le R\}\asymp R^{\weight_1+\weight_2+\weight_3}.$$
However, this quasi-norm is not appropriate to study the measure $\mu$.

Now, let us follow carefully Definition \ref{DefxiG} in order to obtain a quasi-norm that is adapted to the study of the random walk driven by $\mu$.
As a normal nilpotent subgroup $N$ with finite index in $\Delta$, we take
$$
N=\mathbb Z^3= \<s't,st,st'\>=\<\theta_1,\theta_2,\theta_3\>.
$$
We obtain  $N_i=N\cap H_i= \<\theta_i\>$ with generating set $S_i=\{\theta_i\}$. We pick a generating set $S_0=\{s,t,s',t'\}$ of $\Delta$. We form
$\Sigma_\Delta=\{\theta_1,\theta_2,\theta_3,,s,t,s', t'\}$ equipped with the weight functions system $\mathfrak F_\Delta$,
$$F_{\theta_i}(r)=(1+r)^{1/\alpha_i}-1,\;\; i=1,2,3,$$
and $$ F_{s}(r)=F_{s'}(r)=F_{t}(r)=F_{t'}(r)=\min\{r,r^{1/2}\}.$$
In this system, we can compute (this takes a bit of inspection) that if $g$ is represented by $(n_1,n_2,n_3,{\eps} )$
then
$$\|g\|_{\mathfrak F_D}\asymp\{ |n_1|^{1/\alpha_1},|n_2|^{1/\alpha_2},|n_3|^{1/\alpha_3}, |{\eps} |\}$$
and $\#\{\|g\|_{\mathfrak F_\Delta} \le R\}\asymp R^{d}$ with $d=  \sum_1^31/\alpha_i$.  Regarding the random walk driven by $\mu$, we obtain
$$\Lambda_{2,\Delta,\mu}(v)\asymp  v^{-1/d},\;\;\mu^{(n)}(e)\asymp n^{- d}.$$

\end{exa}

\begin{exa}
Consider  semi-direct product $G=\mathbb Z\ltimes _\rho\mathbb Z^2$ where $k\in \mathbb Z$ acts on
$\mathbb Z^2$ by $\rho^k$ where $\rho (n_1,n_2)= (-n_2,n_1)$ (i.e., $\rho$ is the counter-clockwise rotation of  $90{\degree}$).  Concretely, the
group elements are elements $(k,n_1,n_2)$  of $\mathbb Z^3$ and multiplication is given by
$$(k,n_1,n_2)\cdot (k',n'_1,n'_2)=(k+k', m_1,m_2), $$
where
$$m=(m_1,m_2)=n+\rho^k(n'), \;n=(n_1,n_2),\;n'=(n'_1,n'_2).$$

This group is not nilpotent, but it is
of polynomial volume growth of degree $3$ with normal abelian subgroup $N=4\mathbb Z\times \mathbb Z^2$ because $\rho^4=\mbox{id}$. Let $s,v_1,v_2$
be canonical generators of $\mathbb Z$ and $\mathbb Z^2$.
Consider the subgroups
$$
H_1= \langle 4s,v_1\rangle, \quad  H_2= \langle 4s,v_2 \rangle ,
$$
and set $\phi(t)=(1+t)^{\alpha_i}$, $\alpha_i\in (0,2)$, $i=1,2$.   Let $$\mu=(1/3)\sum_0^3\mu_i$$ with
$\mu_0=\frac{1}{2}\mathbf 1_{\{s,s^{-1}\}}$ and $$\mu_i ((k,n_1,n_2)\asymp (1+|k|+|n_i|)^{-2-\alpha_i}\mathbf 1_{H_i}, \,i=1,2.$$

We now describe the geometries $\mathfrak F_G$ and $\mathfrak F_N$ which are such that
$$\|g\|_{\mathfrak F_G}\asymp \|g\|_{\mathfrak F_N} \mbox{ for all }g\in N$$ and are  compatible with the random walk driven by $\mu$.

For $S_i$ (a generating set of $N\cap H_i=H_i$), we take $S_i=\{4s,v_i\}$. For $S_0$, a generating set of $G$, we take $S_0=\{s,v_1,v_2\}$.
We obtain  $\Sigma_G=\{s,4s,v_1,v_2\}$ with associated $F_\sigma$ functions
$F_{s}(t)=\min\{t,t^{1/2}\}\asymp (1+t)^{1/2}-1$ and  $$ F_{4s}(t)=(1+t)^{\max\{1/\alpha_1,1/\alpha_2\}}-1,\; f_{v_i}(t)=(1+t)^{1/\alpha_i}-1.$$ Set $\alpha=\min\{\alpha_1,\alpha_2\}$.
By inspection, for $g=(k,n_1,n_2)$,
$$\|(k,n_1,n_2)\|_{\mathfrak F_G} \asymp \max\{ |k|^{\alpha},|n_1|^{\alpha}, |n_2|^{\alpha}\}.$$
The same power $\alpha$ appears for both  $n_1,n_2$  because of the use of the rotation $\rho$.

Regarding the associated geometry on $N$, we take  $\Xi_0=\{4s,v_1,v_2)$,   and   $$\Xi_i= \{4s, v_i, \rho(v_i)=\pm v_j, \rho^2(v_i)= -v_i,\rho^3(v_i)=\mp v_j\}, \; i\ne j, \; i,j\in\{1,2\}.$$
This gives $\Sigma_N=\{4s, v_1,v_2\}$  and
$$F_{\sigma}(t)=(1+t)^{1/\alpha},\quad\sigma=4s,v_1,v_2.$$
For $g\in N$
  with  $g=(4k,n_1,n_2)$,
$$\|(4k,n_1,n_2)\|_{\mathfrak F_N}\asymp \max\{ |k|^{\alpha},|n_1|^{\alpha}, |n_2|^{\alpha}\}.$$
This   confirms   the fact that $\|g\|_{\mathfrak F_G}\asymp \|g\|_{\mathfrak F_N}$ when $g\in N$.
Regarding $\mu$, we have
$$\Lambda_{2,G,\mu}(v)\asymp v^{-\alpha/3},\;\;\mu^{(n)}(e)\asymp n^{-3/\alpha}.$$
\end{exa}

\section{Pseudo-Poincar\'e inequality and control} \setcounter{equation}{0}    \label{sec-control}
We now turn to the extension  of some
 results in   \cite{SCZ-nil0} to random walk driven by measures $\mu\in \mathcal
P_{\preceq}(G,\mbox{reg})$, when $G$ has polynomial volume growth. Some of
these applications are new even in the case of nilpotent groups.

We recall two definitions  from \cite{SCZ-nil0} and a general result involving these definitions.
Note that these statements involve the more restrictive notion of norm rather than the one of quasi-norm.
\begin{defin}
\label{controlled} Let $\mu$ be a symmetric probability measure on a
group $ G $. Let $\|\cdot\|$ be a norm with volume function $V$. Let
$r: (0,\infty)\rightarrow (0,\infty)$ be a continuous   and   increasing function
with inverse $\rho$. Let $(X_n)_0^\infty$ be the random walk on $G$
driven by $\mu$.

\begin{itemize}
\item We say that $\mu$ is $(\|\cdot\|,r)$-controlled if the following
properties are satisfied:
\begin{enumerate}
\item For all $n$, $\mu^{(2n)}(e)\asymp V(r(n))^{-1}.$

\item For all ${\eps}  >0$ there exists $\gamma\in (0,\infty)$ such that
for all $n\geq 1$,
\begin{equation*}
\mathbf{P}_e\left(\sup_{0\le k\le n}\{\|X_k\|\}\ge \gamma r(n)\right)\le
{\eps}  .
\end{equation*}
\end{enumerate}
\item We say that $\mu$ is strongly $(\|\cdot\|,r)$-controlled if the following properties are satisfied:
\begin{enumerate}
\item There exists $C\in (0,\infty)$ and, for any $\kappa>0$, there exists $
c(\kappa)>0$ such that, for all $n\ge 1$ and $g\in G$ with $\|g\|\le
\kappa r(n)$,
\begin{equation*}
c(\kappa)V(r(n)))^{-1} \le \mu^{(2n)}(g)\le C V(r(n))^{-1}.
\end{equation*}

\item There exist ${\eps} , \gamma_1
\in (0,\infty)$ and
$\gamma_2\ge 1$, such that for all $n,\tau$ with $\frac{1}{2}
\rho(\tau/\gamma_1)\le n \le \rho(\tau/\gamma_1) $,
$$\inf_{x:\|x\|\le \tau }
\mathbf{P}_x\left(\sup_{0\le k\le n}
\{\|X_k\|\}\le \gamma_2 \tau; \|X_n\|\le \tau \right)
\ge {\eps}.
$$
\end{enumerate}
\end{itemize}
\end{defin}

\begin{pro}[{\cite[Propositin 1.4]{SCZ-nil0}}]
\label{pro-StC}  Let $\|\cdot\|$ be a norm on $G$. Assume that $r : (0,\infty)\rightarrow (0,\infty)$ is continuous   and  increasing with inverse $\rho$
and that the symmetric probability measure $\mu$ is strongly $(\|\cdot\|, r)$
-controlled. Then, for any $n$ and $\tau$ such that $\gamma_1 r(2n)\ge \tau$, we have
$$\inf_{x:\|x\|\le \tau}
\mathbf{P}_x\left(\sup_{0\le k\le n} \|X_k\| \le \gamma_2 \tau; \,  \|X_n\|\le \tau\right)
\ge {\eps} ^{1+ 2n/\rho(\tau/\gamma_1) }.
$$
\end{pro}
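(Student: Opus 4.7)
The plan is to reduce Proposition \ref{pro-StC} to an iterated application of condition (2) in the definition of strong $(\|\cdot\|, r)$-control. Set $T := \rho(\tau/\gamma_1)$, so that $r(T) = \tau/\gamma_1$ and the hypothesis $\gamma_1 r(2n)\ge \tau$ becomes $2n\ge T$. Condition (2) provides the uniform lower bound $\eps$ on the probability of staying in $\{\|\cdot\|\le \gamma_2\tau\}$ over a single time window of length between $T/2$ and $T$ and of returning to $\{\|\cdot\|\le \tau\}$ at the end of that window, from any starting point in $\{\|\cdot\|\le \tau\}$. The key observation is that this ``return to the starting ball'' is exactly what makes it possible to concatenate such windows via the Markov property.

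First, I would partition the time interval $\{1,\dots, n\}$ into $q$ consecutive blocks of lengths $n_1,\dots, n_q$, each $n_j\in [T/2, T]$. The range of admissible $q$ is $n/T\le q\le 2n/T$, which contains an integer whenever $n\ge T$; in the residual range $T/2\le n\le T$ I would just take $q=1$ and $n_1 = n$. In every case, $q\le 1+2n/T=1+2n/\rho(\tau/\gamma_1)$. Setting $S_0=0$ and $S_j=n_1+\cdots+n_j$, define
$$A_j=\Big\{\sup_{S_{j-1}\le k\le S_j}\|X_k\|\le \gamma_2\tau,\ \|X_{S_j}\|\le \tau\Big\},\quad 1\le j\le q.$$
On $A_1\cap\cdots\cap A_q$ one has $\sup_{0\le k\le n}\|X_k\|\le \gamma_2\tau$ (using $\|x\|\le\tau\le\gamma_2\tau$ for the endpoint $k=0$) and $\|X_n\|\le\tau$, so this intersection is contained in the event appearing in the statement.

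Next I would apply the Markov property at the times $S_1,\dots,S_{q-1}$. Conditionally on $X_{S_{j-1}} = y$ with $\|y\|\le \tau$ (guaranteed by $A_{j-1}$ for $j\ge 2$ and by the hypothesis for $j=1$), the conditional probability of $A_j$ is exactly the probability appearing in condition (2) with the number of steps equal to $n_j\in[T/2, T]$ and starting point $y$, which is at least $\eps$. Iterating,
$$\mathbf{P}_x(A_1\cap\cdots\cap A_q)\ge \eps^q\ge \eps^{1+2n/\rho(\tau/\gamma_1)},$$
where the second inequality uses $\eps\in(0,1)$ and the bound on $q$. Taking the infimum over $x$ with $\|x\|\le \tau$ gives the claim.

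There is no serious obstacle here; the proof is essentially a bookkeeping exercise combining the strong control hypothesis with the Markov property. The two points needing attention are the existence of an admissible integer partition of $[0,n]$ (ensured by $2n\ge T$) and the fact that each successive starting point lies in $\{\|\cdot\|\le \tau\}$, which is precisely guaranteed by the second half of each event $A_j$.
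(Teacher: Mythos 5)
Your argument is correct and is the natural chaining proof: the ``return to $\{\|\cdot\|\le\tau\}$'' clause built into condition (2) of strong control is precisely what lets one concatenate time windows via the Markov property. Note that the paper itself does not prove this statement; it is quoted verbatim from \cite[Proposition~1.4]{SCZ-nil0}, where this same chaining argument is the intended proof. The only point I would tighten is the existence of the integer partition: you assert that picking an integer $q\in[n/T,2n/T]$ yields integer block lengths $n_j\in[T/2,T]$, but $\lceil n/q\rceil$ can slightly exceed $T$ and $\lfloor n/q\rfloor$ can slightly undershoot $T/2$. A cleaner choice is to take $q$ to be the unique integer with $q\lceil T/2\rceil\le n\le (q-1)\lceil T/2\rceil+\lfloor T\rfloor$ (these intervals cover $[\lceil T/2\rceil,\infty)$ since $2\lceil T/2\rceil\le \lfloor T\rfloor+1$), set $n_j=\lceil T/2\rceil$ for $j<q$ and $n_q=n-(q-1)\lceil T/2\rceil$; this gives $q\le 2n/T\le 1+2n/\rho(\tau/\gamma_1)$ as needed. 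This is a bookkeeping refinement, not a gap in the idea.
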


Another key notion is that of pseudo-Poincar\'e inequality (see \cite{CSCiso}).

\begin{defin}
Let $G$ be a discrete group equipped with a symmetric probability
measure $\nu$, a quasi-norm $\|\cdot\|$ and a continuous
increasing
function $r : (0,\infty)\rightarrow (0,\infty)$ with inverse $\rho$. We say that $\nu$ satisfies a pointwise $
(\|\cdot\|,r)$-pseudo-Poincar\'e inequality if, for any $f$ with finite
support on $G$,
$$\forall\, g\in G,\;\;\sum_{x\in G}|f(xg)-f(x)|^2\le C \rho(\|g\|) \mathcal{E}_\nu (f,f).$$
\end{defin}

\begin{pro} \label{pro-PSmu}
Let $G$ be a group of polynomial volume growth and $\mu\in \mathcal
P_{\preceq}(G,\mbox{\em reg})$, see   {\em Definition
\ref{def-Pregorder}}.    Let $(\Sigma_G,\mathfrak F_G)$ be a geometry adapted
to $\mu$ as in {\em Definition \ref{DefxiG}}.  Then the symmetric
probability measure $\mu$ satisfies a pointwise  linear $($i.e.,
$r(t)=t$$)$  $\|\cdot\|_{\mathfrak F_G}$-pseudo-Poincar\'e
inequality.
\end{pro}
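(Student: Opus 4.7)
The plan is to prove the pseudo-Poincar\'e inequality by decomposing an arbitrary element $g$ with $\|g\|_{\mathfrak F_G}\le R$ via the structural result of Corollary \ref{cor-polvol}, then estimating each factor separately using Propositions \ref{pro-PS2}--\ref{pro-PS3}, and finally recombining by a telescoping/Cauchy--Schwarz argument.

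First I would invoke Corollary \ref{cor-polvol} to write any $g\in G$ with $\|g\|_{\mathfrak F_G}\le R$ as
$$
g=g_0\prod_{j=1}^Q\theta_j^{x_j}\,g_j
$$
with $\theta_j\in\Sigma_G\setminus\Sigma_G(0)$ (so $\theta_j\in S_{i_j}\subset N_{i_j}\subset H_{i_j}$ for some $i_j\in\{1,\dots,k\}$), $|x_j|\le C F_{G,\theta_j}(R)=C\Phi_{i_j}^{-1}(R)$, and $|g_i|_{S_0}^2\le CR$, where $Q$ is a constant independent of $g$ and $R$. Standard telescoping together with Cauchy--Schwarz then gives
$$
\sum_{x\in G}|f(xg)-f(x)|^2\le(2Q{+}1)\!\left[\sum_{i=0}^Q\sum_{y}|f(yg_i)-f(y)|^2+\sum_{j=1}^Q\sum_{y}|f(y\theta_j^{x_j})-f(y)|^2\right],
$$
where left-invariance of the counting measure was used. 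It therefore suffices to bound each of the $2Q+1$ terms on the right by $CR\,\mathcal E_{G,\mu}(f,f)$.

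For the long-range factors $\theta_j^{x_j}$, the point is that $\theta_j\in H_{i_j}$ has bounded word length in $H_{i_j}$, so $|\theta_j^{x_j}|_{H_{i_j}}\le c|x_j|\le c'\Phi_{i_j}^{-1}(R)$. Applying Proposition \ref{pro-PS3} (with subgroup $H_{i_j}$, measure $\mu_{i_j}$, and function $\phi_{i_j}$, noting that $\Phi_{i_j}$ is monotone) gives
$$
\sum_y|f(y\theta_j^{x_j})-f(y)|^2\le C\Phi_{i_j}(|\theta_j^{x_j}|_{H_{i_j}})\,\mathcal E_{G,\mu_{i_j}}(f,f)\le C'R\,\mathcal E_{G,\mu_{i_j}}(f,f),
$$
and since $\mu\ge p_{i_j}\mu_{i_j}$ with $p_{i_j}>0$, this is dominated by $C''R\,\mathcal E_{G,\mu}(f,f)$.

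For the short factors $g_i$ with $|g_i|_{S_0}\le\sqrt{CR}$, I would write $g_i=s_1s_2\cdots s_m$ with $s_\ell\in S_0$ and $m\le\sqrt{CR}$, and apply Cauchy--Schwarz once more:
$$
\sum_y|f(yg_i)-f(y)|^2\le m\sum_{\ell=1}^m\sum_y|f(ys_\ell)-f(y)|^2\le m^2\cdot C_0\,\mathcal E_{G,\mu_0}(f,f)\le CR\,\mathcal E_{G,\mu}(f,f),
$$
where the inner estimate uses that each $s\in S_0$ satisfies $\sum_y|f(ys)-f(y)|^2\le C_0\mathcal E_{G,\mu_0}(f,f)$ (since $s$ lies in the support of $\mu_0$ or in the finite generating set; in the latter case one may invoke Proposition \ref{pro-PS2} applied to the trivial case, or simply the fact that $\mu_0$ has generating support). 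Combining these two estimates with the telescoping bound completes the proof with $r(t)=t$.

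The main obstacle, in my view, is purely bookkeeping: ensuring that Proposition \ref{pro-PS3} can legitimately be applied to each $\theta_j^{x_j}$ with the correct index $i_j$ coming from the construction of $(\Sigma_G,\mathfrak F_G)$, and that the constants in Corollary \ref{cor-polvol} match the ones needed so that $\Phi_{i_j}(|\theta_j^{x_j}|_{H_{i_j}})\le CR$ holds uniformly. There is no serious analytic difficulty; once the decomposition of Corollary \ref{cor-polvol} is in hand, the argument is entirely a telescoping estimate. Note finally that the hypothesis $\mu\in\mathcal P_\preceq(G,\mathrm{reg})$ enters implicitly through the fact that the geometry $(\Sigma_G,\mathfrak F_G)$ is well defined (condition \eqref{order}) and hence the decomposition above makes sense.
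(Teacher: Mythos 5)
Your proposal is correct and matches the paper's proof, which is given only as a one-sentence outline: ``This follows from Proposition \ref{pro-PS3} and Corollary \ref{cor-polvol} via a simple telescopic sum argument and the Cauchy--Schwarz inequality for finite sums.'' You have fleshed out exactly this argument, in particular making the key choice to invoke Proposition \ref{pro-PS3} (which produces $\Phi_i$, the inverse of $F_{G,\theta_j}$, rather than $\phi_i$) so that the bound $\Phi_{i_j}(|\theta_j^{x_j}|_{i_j})\le C'R$ comes out linear in $R$; the only minor inaccuracies are cosmetic — the change of variable uses right-invariance of counting measure rather than left, and the bound on the $S_0$-pieces ultimately appeals to the generating condition $\{g:\mu(g)>\eps\}\supset$ a generating set (hence to $\mathcal E_{G,\mu}$) rather than to ``$\mu_0$ has generating support.''
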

\begin{proof}This follows from Proposition \ref{pro-PS3} and Corollary \ref{cor-polvol} via a simple telescopic sum argument and the Cauchy-Schwarz inequality for finite sums.
\end{proof}

\begin{theo}\label{thm0}Let $G$ be a group of polynomial volume growth and $\mu$ be a probability in $ \mathcal P_{\preceq}(G,\rm{reg})$.
  Let $(\Sigma_G,\mathfrak F_G)$ be a geometry adapted to $\mu$ as in {\em
Definition \ref{DefxiG}}.  Let
$$\mathbf F=\mathbf F_{G,\mathfrak F_G}$$
be as in {\em Definition \ref{def-FG}}. Let $\mathfrak F_{G,2}$ be
a system of functions related to the system $\mathfrak F_G$ as in {\em
Proposition \ref{pro-variation}(c)} so that, for each $\sigma\in
\Sigma_G$,
$$F_{2,\sigma} \mbox{ is convex, } \  F_{2,\sigma}\asymp
F_{\sigma}\circ F^{-1} \  \mbox{ and } \  F(t)=(1+t)^{\weight_*}-1$$
 with $\weight_*>0$ as in
{\em Proposition \ref{pro-variation}}{\rm \red (c)}. Then $\|\cdot \|_{\mathfrak
F_{G,2}}$ is a norm on $G$, and it satisfies $\|\cdot \|_{\mathfrak
F_{G,2}}\asymp \|\cdot\|_{\mathfrak F_G}^{\weight_*}$ over $G$.  Set
$$V(R)=  \#\{\|g\|_{\mathfrak F_{G,2}}\le R\}.$$
With this notation, the following properties are satisfied:
\begin{enumerate}
\item[$(1)$] For all $R\ge 1$, $V(R) \asymp \mathbf F(R^{1/\weight_*})$ and,  for all $n$,
$$\mu^{(n)}(e)\asymp 1/\mathbf F(n)\asymp 1/V(
n^{\weight_*}).$$
\item[$(2)$] There exists a constant $C_1$ such that, for all $g\in G$, $f\in L^2(G)$,  $$\sum_{x\in G}|f(xg)-f(x)|^2\le
C_1 \|g\|_{\mathfrak F_{G,2}}^{1/\weight_*}  \mathcal{E}_\mu (f,f).$$
\item[$(3)$]  There exists a constant $C_2$ such that, for all $n,m\in \mathbb N$  and  $x,y\in G$,
$$ |\mu^{(n+m)}(xy)-\mu^{(n)}(x)|\le C_2 \left(\frac{m}{n}+\frac{\|y\|^{1/2\weight_*}_{{\mathfrak F}_{G,2}}}{\sqrt{n}} \right) \mu^{(n)}(e).$$
\item[$(4)$] There exists
$\eta \in (0, 1]$
such that, for all  $n\ge 1$ and $g\in G$ with $\|g\|_{\mathfrak F_{G,2}}\le \eta n^{\weight_*}$, we have
$$\mu^{(n)}(g) \asymp 1/V(n^{\weight_*}).$$
\item[$(5)$]  The symmetric probability measure $\mu$ is $(\|\cdot \|_{\mathfrak F_{G,2}},r)$-controlled with $r(t)= t^{\weight_*}$.
\item[$(6)$]  The symmetric probability measure $\mu$ is strongly $(\|\cdot \|_{\mathfrak F_{G,2}},r)$-controlled with $r(t)= t^{\weight_*}$.\end{enumerate}
\end{theo}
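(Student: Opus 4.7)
The plan is to establish (1)--(6) in sequence, using as principal inputs Proposition \ref{pro-variation}(c), Corollary \ref{cor-polvol}, Theorem \ref{th-polmu} and Proposition \ref{pro-PSmu}, and writing $r(t)=t^{w_*}$ throughout. Part (1) is immediate: Proposition \ref{pro-variation}(c) gives the norm identification $\|g\|_{\mathfrak F_{G,2}}\asymp \|g\|_{\mathfrak F_G}^{w_*}$, so $\{g:\|g\|_{\mathfrak F_{G,2}}\le R\}$ coincides up to constants with $\{g:\|g\|_{\mathfrak F_G}\le cR^{1/w_*}\}$; Corollary \ref{cor-polvol} then yields $V(R)\asymp \mathbf F(R^{1/w_*})$, and Theorem \ref{th-polmu} completes the return probability part. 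For (2), Proposition \ref{pro-PSmu} supplies a pointwise linear $\|\cdot\|_{\mathfrak F_G}$-pseudo-Poincar\'e inequality, which becomes (2) upon substituting $\|g\|_{\mathfrak F_G}\asymp \|g\|_{\mathfrak F_{G,2}}^{1/w_*}$.

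For (3) I would use the standard two-parameter splitting
$$\mu^{(n+m)}(xy)-\mu^{(n)}(x)=\bigl[\mu^{(n+m)}(xy)-\mu^{(n+m)}(x)\bigr]+\bigl[\mu^{(n+m)}(x)-\mu^{(n)}(x)\bigr].$$
For the spatial term, decompose $\mu^{(n+m)}=\mu^{(\ell)}*\mu^{(n+m-\ell)}$ with $\ell\sim n/2$, apply Cauchy--Schwarz pointwise, bound the shift-difference factor using (2) applied to $f=\mu^{(\ell)}$ (yielding the factor $C_1\|y\|_{\mathfrak F_{G,2}}^{1/w_*}\mathcal E_\mu(\mu^{(\ell)},\mu^{(\ell)})$), and use $\|\mu^{(n+m-\ell)}\|_2^2=\mu^{(2(n+m-\ell))}(e)\asymp \mu^{(n)}(e)$ from (1). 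Combined with the exact identity $\mathcal E_\mu(\mu^{(\ell)},\mu^{(\ell)})=\tfrac12(\mu^{(2\ell)}(e)-\mu^{(2\ell+2)}(e))$ and the regular variation of $\mathbf F$ with positive index (which gives $\mathcal E_\mu(\mu^{(\ell)},\mu^{(\ell)})\asymp \mu^{(2\ell)}(e)/\ell$), this produces the $\|y\|^{1/(2w_*)}/\sqrt n$ contribution. For the temporal term, the same Nash-type identity bounds $|\mu^{(k+1)}(x)-\mu^{(k)}(x)|\preceq \mu^{(2k)}(e)/\sqrt k$ by Cauchy--Schwarz on $\mu^{(k)}*(\mu-\delta_e)$, and summing from $k=n$ to $n+m$ yields the $m/n$ contribution. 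Part (4) is then immediate from (3) applied at $x=e$, $m=0$: choosing $\eta$ so that $C_2\eta^{1/(2w_*)}\le 1/2$ forces $\mu^{(n)}(g)\ge \tfrac12\mu^{(n)}(e)\asymp 1/V(n^{w_*})$ whenever $\|g\|_{\mathfrak F_{G,2}}\le\eta n^{w_*}$, while the reversibility upper bound $\mu^{(n)}(g)\le \mu^{(n)}(e)$ gives the matching upper bound via (1).

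For (5) the first condition of Definition \ref{controlled} is (1), and for (6) the first condition of strong control is exactly (4). The remaining maximal and exit-probability conditions will be derived from a Nash/spectral-profile chaining argument: the pointwise pseudo-Poincar\'e inequality (2), together with the volume lower bound $V(R)\asymp \mathbf F(R^{1/w_*})$, yields a Davies-type off-diagonal bound on $\mu^{(n)}(\{\|g\|_{\mathfrak F_{G,2}}\ge Ar(n)\})$; combining with the strong Markov property and a union bound over dyadic time scales gives the maximal probability estimate required in Definition \ref{controlled}. The second condition of strong control then follows by combining (4), which supplies $\mathbf P_x(\|X_n\|_{\mathfrak F_{G,2}}\le\tau)\ge \varepsilon_0$ for $\|x\|_{\mathfrak F_{G,2}}\le\tau$ and $n\asymp \rho(\tau/\gamma_1)$, with the maximal inequality, to pick up the joint event.

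The main obstacle will be the maximal inequality underlying (5) and (6). Because $\mu$ has heavy-tailed components along each subgroup $H_i$ and the quasi-norm $\|\cdot\|_{\mathfrak F_G}$ is anisotropic, a naive first-moment bound is too crude; one has to exploit the pointwise character of (2) together with the doubling of $V$ inherited from the regular variation of $\mathbf F$ in a careful chaining scheme. Analogous technical arguments are developed in \cite{SCZ-nil0} in the nilpotent setting, and they should transfer to the polynomial-growth case thanks to Theorem \ref{th-quasi}, which allows us to push any geometric statement back and forth between $G$ and the finite-index nilpotent subgroup $N$.
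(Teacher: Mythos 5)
Your treatment of items (1), (2), and (4) matches the paper's route: (1) and (2) are re-statements of Corollary \ref{cor-polvol}, Theorem \ref{th-polmu} and Proposition \ref{pro-PSmu} via the identification $\|\cdot\|_{\mathfrak F_{G,2}}\asymp\|\cdot\|_{\mathfrak F_G}^{w_*}$, and (4) does follow from (1) and (3) by the two-sided argument you sketch. For (3), your plan of splitting into a spatial and a temporal increment, bounding the spatial one by Cauchy--Schwarz plus the pseudo-Poincar\'e inequality applied to $\mu^{(\ell)}$, and the temporal one by a single-step Dirichlet-form estimate, is the same framework the paper encapsulates in Appendix Proposition~\ref{pro-93}. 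However, your temporal estimate is off: you claim $|\mu^{(k+1)}(x)-\mu^{(k)}(x)|\preceq\mu^{(2k)}(e)/\sqrt{k}$ and then that summing this over $m$ steps gives $(m/n)\mu^{(n)}(e)$, but that sum produces $m\mu^{(n)}(e)/\sqrt{n}$. The correct single-step bound is $\preceq\mu^{(2v)}(e)/u$ with $u+2v\le k$, which one obtains from the spectral inequality $\|f*(\delta_e-\mu)*\mu^{(u)}\|_2\le Cu^{-1}\|f\|_2$; this is precisely what Proposition~\ref{pro-93} supplies and what sums to the stated $m/n$ factor.

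The genuine gap is in your plan for (5) and (6). You propose to derive the maximal inequality of Definition~\ref{controlled} from ``a Davies-type off-diagonal bound'' built from the pointwise pseudo-Poincar\'e inequality and the volume estimate, combined with a dyadic union bound. But the measures $\mu\in\mathcal P_{\preceq}(G,\mathrm{reg})$ are heavy-tailed along each $H_i$, and for such walks Davies/Gaussian-type off-diagonal upper bounds simply fail --- that is the whole reason the near-diagonal estimate in (4) is only stated inside $\|g\|_{\mathfrak F_{G,2}}\le\eta n^{w_*}$ rather than with an exponential off-diagonal decay. The paper instead obtains the maximal inequality directly from the spectral-profile estimate $\Lambda_{2,G,\mu}\asymp 1/\mathbf F^{-1}$ (Theorem~\ref{th-polmu}) together with \cite[Lemma~4.1]{PZ}, which is the tool designed to yield exactly this kind of control. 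Similarly, for (6) you envisage building the strong-control condition by hand from (4) plus the maximal inequality; the paper short-circuits this by observing that $\|\cdot\|_{\mathfrak F_{G,2}}$ is well-connected in the sense of \cite[Definition~3.3]{SCZ-nil0} and that the pseudo-Poincar\'e inequality (2) holds, so that the general implication \cite[Proposition~3.5]{SCZ-nil0} upgrades $(\|\cdot\|_{\mathfrak F_{G,2}},r)$-control to strong $(\|\cdot\|_{\mathfrak F_{G,2}},r)$-control. Without these two citations your plan for the maximal and exit estimates would require substantially more work, and the ``Davies-type'' route would in fact lead to a false intermediate statement.
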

\begin{proof} The first two items are results that have already been proved above and which are now stated in terms of $\mathfrak F_{G,2}$ instead of $\mathfrak F_G$.
The point in doing this is that $ \|\cdot \|_{\mathfrak F_{G,2}}$ is
a norm whereas $\|\cdot\|_{\mathfrak F_G}$ might only be a
quasi-norm.  The third item (regularity) follows from the first two and
  Appendix   \ref{app-reg}, see Proposition \ref{pro-93}.
Item (4) immediately follows from (1) and (3).

Given the relation between $\|\cdot\|_{\mathfrak F_G}$ and $\|\cdot\|_{\mathfrak F_{G,2}}$, item (5) follows from (1), the estimate on $\Lambda_{2,G,\mu}$ stated in Theorem \ref{th-polmu} and \cite[Lemma 4.1]{PZ}.

To prove item (6), observe that the norm $\|\cdot\|_{\mathfrak F_{G,2}}$
is well-connected in the sense of \cite[Definition 3.3]{SCZ-nil0}
(this is a rather weak property satisfied by any such norm), and that the pointwise pseudo-Poincar\'e inequality  stated as item (2) holds.  Because of these properties and \cite[Proposition 3.5]{SCZ-nil0},   $(\|\cdot
\|_{\mathfrak F_{G,2}},r)$-control (that is, item (5)) implies
 strong $(\|\cdot \|_{\mathfrak F_{G,2}},r)$-control (item (6)).
\end{proof}

\medskip

Note that, by the symmetry of $\mu$, $\|\mu^{(2n)}\|_\infty=\mu^{(2n)}(e)$. Since $n\mapsto \|\mu^{(n)}\|_\infty$ is obviously non-increasing, the on-diagonal estimate
in Theorem \ref{thm0}(1) implies
\begin{equation}\label{e:5.4}
\|\mu^{(n)}\|_\infty\asymp 1/\mathbf F(n).
\end{equation}

\section{H\"{o}lder continuity of
caloric
 functions} \label{sec-Holder}  \setcounter{equation}{0}
In this section, we place ourselves
in the context of Theorem \ref{thm0}.
Namely, we consider a group $G$, finitely generated and of polynomial volume growth, equipped with a probability measure $\mu\in \mathcal P_{\preceq}(G,\mbox{reg})$. Having chosen a normal nilpotent subgroup $N$ of finite index in $G$,  we construct the quasi-norm \break
 $\|\cdot\|_{\mathfrak F_G}$   as in Definitions \ref{DefxiG}  and the associated function $\mathbf F=\mathbf F_{G,\mathfrak F_G}$ introduced in  Definition \ref{def-FG} so that
 $\#\{g\in : \|g\|_{\mathfrak F_G}\le R\} \asymp \mathbf F(R)$, see Corollary \ref{cor-polvol}.   Consider further the norm $\|\cdot\|_{\mathfrak F_{G,2}}$ and positive real $\weight_*$ as in Theorem \ref{thm0} so that $\|\cdot \|_{\mathfrak F_{G,2}}\asymp \|\cdot\|_{\mathfrak F_G}^{\weight_*}$.  For any $x\in G$ and $r>0$, let $$B(x, r)=\{z\in
G: \|x^{-1}z\|_{\mathfrak F_{G,2}}<r\}.$$
Also, for any $x\in G$ and $A\subset G$, let
$$\|x^{-1}A\|_{\mathfrak F_{G,2}}=\inf_{y\in A}\|x^{-1}y\|_{\mathfrak F_{G,2}}$$
  denote  the
distance between $x$ and $A$ with respect to $\|\cdot\|_{\mathfrak
F_{G,2}}.$

 The goal of this section is to prove the H\"older regularity of bounded solutions of the discrete parabolic equation  (\ref{diffeq}). More precisely, given
 a (discrete) time interval  $I=[0,N]$  and a subset $A$ of $G$, we say that a real valued bounded function $q$ defined on $I\times G$ is a solution of (\ref{diffeq}) in $I\times A$ if
 $$q(n+1,x)-q(n,x) =
 [q(n, \cdot)*(\mu - \delta_e )](x), \quad n,n+1\in I, \, x\in A,
 $$
 or, equivalently
$$ q(n+1, x)= q(n,\cdot)*\mu (x),\quad \;n,n+1\in I,x\in A.$$
 for all $n$ with $n,n+1\in I$ and all $x\in A$.  Note that this definition requires $q$ to be defined over all of $G$ at all times in $I$. This is natural in the present context since, typically, the measure $\mu$ has infinite support. Because such a solution $q$ is bounded and defined for each $k\in I$ on all of $G$, it is always possible to extend it forward in time.

  Let $(X_n)_0^\infty$ denote the random walk driven by $\mu$  started at $X_0$. For any subset
$A\subset G$, define
$$
\tau_A=\inf\{n\ge0: X_n\notin A\},
\quad \sigma_A=\inf\{n\ge1: X_n\in A\}.
$$

 The argument developed below is mainly based on that of \cite[Theorem 4.9]{BK},
 which in turn is the discrete analog of that for \cite[Theorem 4.14]{CK},
  and makes use of the following notion of
 {\em caloric function}
 (we followed the definition from  \cite{CK, BK}, though  co-caloric might be
   more appropriate, see \cite[page 263]{Doob}).

 Let $\Upsilon=\Z_+ \times G$. We will make use of the $\Upsilon$-valued
Markov chain $Z_k:=(V_k, X_k)$ where $V_k=V_0+k$. Write
$\Pp_{(i,x)}$ for the law of $Z_k$ started at $(i,x)$ and let
$\mathscr{F}_i=\sigma\{Z_k:k\le i\}$. A bounded function $u(k,x)$ on
$\Upsilon$ is said to be
{\em caloric}
 on $D\subset \Upsilon$, if
$k\mapsto u(V_{k\wedge \tau_D}, X_{k\wedge \tau_D})$
 is a martingale, where
$$
\tau_D=\inf \left\{k\ge 0: (V_k,X_k)\notin D \right\}.
$$
In the case $V_0=0$ and $D=I\times A=[k_1,k_2]\times A$, $k_1<k_2$, the condition that $k\mapsto u(V_{k\wedge \tau_D}, X_{k\wedge \tau_D})$ is a martingale is equivalent to
$$ u(k-1,x)=[u(k,\cdot)*\mu] (x),\;\; k,k-1\in I, \;x\in A.$$
This
is the ``backward version'' of the parabolic discrete equation (\ref{diffeq}) in the sense that, for any $N$,  $q(k,x)=u(N-k,x)$ is a solution of (\ref{diffeq})  in   $J\times A$, $J=[N -k_2,N-k_1]$ (and vice versa). In particular, for any fixed integer $N$ and bounded function $f$ on $G$, the functions
$$(k,x)\mapsto  \mu^{(N-k)}(x) \mbox{ and }  (k,x)\mapsto  f*\mu^{(N-k)}(x)$$
are
caloric
 functions on $[0,N]\times G$.

 \begin{lem}\label{Lem-1} Given any $\delta>0$, there exists a
constant $\kappa:=\kappa(\delta)>0$ such that for
any $x,y \in G$,  $A\subset G$  with  $\min \left\{ \|x^{-1}A\|_{\mathfrak F_{G,2}},
 \|y^{-1}A\|_{\mathfrak F_{G,2}} \right\} \geq\kappa n^{\weight_*}$, and any $n\geq 1$,
$$
\Pp_x(X_n=y, \sigma_A\le n)\le \delta /\mathbf F(n).
$$
\end{lem}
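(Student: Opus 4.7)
My plan is to split $\Pp_x(X_n=y,\sigma_A\le n)$ according to whether $\sigma_A\le n/2$ or $\sigma_A\in(n/2,n]$, and to reduce the second case to the first by reversibility of $\mu$.

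For the early-hitting piece, I would apply the strong Markov property at $\sigma_A$ to write
\[
\Pp_x(X_n=y,\,\sigma_A\le n/2)=\sum_{k=1}^{\lfloor n/2\rfloor}\sum_{z\in A}\Pp_x(\sigma_A=k,X_k=z)\,\mu^{(n-k)}(z^{-1}y),
\]
and bound each factor $\mu^{(n-k)}(z^{-1}y)$ by $\|\mu^{(n-k)}\|_\infty\le C'/\mathbf F(n)$, using \eqref{e:5.4} together with the regular variation of $\mathbf F$ (which gives $\mathbf F(n-k)\asymp \mathbf F(n)$ uniformly for $k\le n/2$). This reduces the estimate to $(C'/\mathbf F(n))\,\Pp_x(\sigma_A\le n/2)$. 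Since on $\{\sigma_A\le n/2\}$ the walk must have strayed at least $\kappa n^{w_*}$ from its start (because $\|x^{-1}X_{\sigma_A}\|_{\mathfrak F_{G,2}}\ge \|x^{-1}A\|_{\mathfrak F_{G,2}}\ge \kappa n^{w_*}$), translation invariance gives
\[
\Pp_x(\sigma_A\le n/2)\le \Pp_e\Bigl(\sup_{0\le k\le n/2}\|X_k\|_{\mathfrak F_{G,2}}\ge \kappa n^{w_*}\Bigr),
\]
and this can be made smaller than any prescribed $\epsilon>0$ by choosing $\kappa$ sufficiently large, thanks to the $(\|\cdot\|_{\mathfrak F_{G,2}},t^{w_*})$-control of Theorem~\ref{thm0}(5).

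For the late-hitting piece, the key observation is that since $\mu$ is symmetric, the path-reversal $(X_0,\ldots,X_n)\mapsto(X_n,\ldots,X_0)$ is a probability-preserving bijection between paths from $x$ to $y$ and paths from $y$ to $x$. Under this reversal, a forward first visit to $A$ at some time $k\in(n/2,n]$ becomes a visit to $A$ of the reversed path at time $n-k\in[0,n/2)$; combined with $y\notin A$ (which holds because $\|y^{-1}A\|_{\mathfrak F_{G,2}}>0$), this actually forces $n-k\ge 1$, hence the reversed walk starting from $y$ satisfies $\sigma_A\le\lfloor n/2\rfloor$. Therefore
\[
\Pp_x(X_n=y,\,n/2<\sigma_A\le n)\le \Pp_y(X_n=x,\,\sigma_A\le n/2),
\]
which falls under the first case with the roles of $x$ and $y$ exchanged, now invoking the assumption $\|y^{-1}A\|_{\mathfrak F_{G,2}}\ge \kappa n^{w_*}$. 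Summing the two contributions and choosing $\kappa$ large enough in terms of $\delta$ and the constant $C'$ yields $\Pp_x(X_n=y,\sigma_A\le n)\le \delta/\mathbf F(n)$. The most delicate point to check carefully will be the time-reversal correspondence between the hitting-time events, which relies both on the symmetry of $\mu$ and on the observation $y\notin A$ to rule out the boundary case $n-k=0$.
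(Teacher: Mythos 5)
Your proof is correct and follows essentially the same route as the paper: the same split at time $\lfloor n/2\rfloor$, the same bound on the early-hitting piece via the strong Markov property, \eqref{e:5.4}, and the $(\|\cdot\|_{\mathfrak F_{G,2}},t^{w_*})$-control of Theorem~\ref{thm0}(5), and the same time-reversal reduction of the late-hitting piece to the early-hitting piece starting from $y$. The only cosmetic difference is in how the reversal step is packaged: the paper introduces the last hitting time $\hat\sigma_A=\sup\{k\le n: X_k\in A\}$ and sums an explicit product formula over sample paths, while you argue directly via the probability-preserving path-reversal bijection; these are the same calculation. You also correctly note that $y\notin A$ (forced by $\|y^{-1}A\|_{\mathfrak F_{G,2}}>0$) rules out the degenerate case $k=n$, which is the small point that makes the reversed first-hitting time fall in $[1,\lfloor n/2\rfloor]$.
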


\begin{proof}
By the strong Markov property of $X_n$ and \eqref{e:5.4},  we have
\begin{align*}
\Pp_x(X_n=y, \sigma_A\le [n/2])
&= \Ee_x \left[ {\bf1}_{\{\sigma_A\le [n/2]\}} \Pp_{X_{\sigma_A}}(X_{n-\sigma_A}=y) \right] \\
&\le \frac{c_1}{\mathbf F(n-[n/2])} \Pp_x(\sigma_A\le [n/2])\\
&\le \frac{c_2}{\mathbf F(n)}\Pp_x(\sigma_A\le [n/2]).
\end{align*}
Furthermore, due to Theorem \ref{thm0}(5), by choosing $\kappa:=\kappa(\delta)>0$ large
enough, we have
$$\Pp_x(\sigma_A\le [n/2])\le \Pp_x\left(\sup_{0\le k\le [n/2]}\|x^{-1}X_k\|_{\mathfrak F_{G,2}}\ge \kappa n^{\weight_*}\right)\le \delta/(2c_2),$$ which yields that
\begin{equation}\label{p-lem-1}\Pp_x(X_n=y, \sigma_A\le [n/2])\le \delta /(2\mathbf F(n)).\end{equation}

We now consider $\Pp_x(X_n=y, [n/2]\le \sigma_A\le n)$. If the first
hitting time of
$A$ is
between time $[n/2]$ and time $n$, then
the last hitting time of $A$ before time
$n$ is larger than
$[n/2]$. So,
setting $\hat\sigma_A=\sup\{k\le n: X_k\in A\}$, we have
$$
\Pp_x \left( X_n=y, [n/2]\le \sigma_A\le n \right) \le \Pp_x \left( X_n=y, [n/2]\le \hat\sigma_A\le n \right).
$$
 We claim that by  time reversal,
$$\Pp_x(X_n=y, [n/2]\le \hat\sigma_A\le n)=\Pp_y(X_n=x, \sigma_A\le n-[n/2]).$$ To see this, observe that the symmetry of the transition probability $p(x,y):=\mu(x^{-1}y)$ implies that we have, for any $[n/2]\le k\le n$,
\begin{align*}&\Pp_x(X_k=z_k,X_{k+1}=z_{k+1},\cdots, X_n=y)\\
&=p_k(x,z_k)p(z_k,z_{k+1})\cdots p(z_{n-1},y)\\
&=\Pp_y(X_1=z_{n-1},
\cdots, X_{n-k}=z_k, X_n=x),
\end{align*} where
$p_k(x,z):=\Pp_x(X_k=z)=\mu^{(k)}(x^{-1}y)$. Summing over all $z_k\in A$ and
$z_{k+1},\cdots, z_{n-1}\notin A$, we have
\begin{align*}&\Pp_x(X_k\in A, X_{k+1}\notin A,\cdots, X_{n-1}\notin A, X_n=y)\\
&= \Pp_y(X_1\notin A, \cdots,
X_{n-k-1}\notin A, X_{n-k}\in A,
X_n=x).\end{align*}
Further, summing over $[n/2]\le k\le n$, this
yields that
$$\Pp_x([n/2]\le \hat\sigma_A\le n, X_n=y)= \Pp_y(0\le \sigma_A\le n-[n/2], X_n=x).$$ This proves the desired assertion.
Arguing as in the first part of the proof, we find that
$$\Pp_x([n/2]\le \hat\sigma_A\le n, X_n=y)=\Pp_y(0\le \sigma_A\le
n-[n/2], X_n=x)\le \delta /(2\mathbf F(n)).$$ Therefore, the lemma
follows from the estimate above and \eqref{p-lem-1}.\end{proof}

\begin{pro} Let
$\eta\in(0,1]$
 be the constant in
Theorem $\ref{thm0}(4)$.
 For all $n\ge 1$, there exist constants $c_1\in (0,\infty)$ and $\theta\in (0,1)$ such that
for any $x,y,z\in G$ with $\max \left\{\|x^{-1}z\|_{\mathfrak F_{G,2}},  \|y^{-1}z\|_{\mathfrak F_{G,2}} \right\} \le \eta n^{\weight_*}/2$
and $r\ge
(n/\theta)^{\weight_*}$,
$$
\Pp_x \left(X_n=y, \sup_{0\le k\le n}\|z^{-1}X_k\|_{\mathfrak F_{G,2}}\le r \right)\ge \frac{c_1}{\mathbf F(n)}.
$$
\end{pro}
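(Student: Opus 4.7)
The plan is to prove this proposition by writing the target probability as the near-diagonal return probability of Theorem~\ref{thm0}(4) minus an error term controlled by the hitting estimate of Lemma~\ref{Lem-1}. Concretely, I would decompose
$$\Pp_x(X_n=y, \sup_{0\le k\le n}\|z^{-1}X_k\|_{\mathfrak F_{G,2}}\le r)
= \Pp_x(X_n=y) - \Pp_x(X_n=y, \tau_{B(z,r)}\le n),$$
where $\tau_{B(z,r)}$ is the first exit time from $B(z,r)$, and then estimate the two terms separately.

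For the first (main) term, I would use that $\|\cdot\|_{\mathfrak F_{G,2}}$ is a genuine norm on $G$ by Theorem~\ref{thm0}, so by the triangle inequality,
$$\|x^{-1}y\|_{\mathfrak F_{G,2}}\le \|x^{-1}z\|_{\mathfrak F_{G,2}}+\|z^{-1}y\|_{\mathfrak F_{G,2}} \le \eta n^{w_*}.$$
Hence Theorem~\ref{thm0}(4) applies to $g=x^{-1}y$ and yields a constant $c_0>0$ (depending only on $\mu$ and $G$) such that
$$\Pp_x(X_n=y)=\mu^{(n)}(x^{-1}y)\ge \frac{c_0}{\mathbf F(n)}.$$

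For the error term, I would identify $\tau_{B(z,r)}$ with $\sigma_A$ where $A=B(z,r)^c$: since $\|x^{-1}z\|_{\mathfrak F_{G,2}}\le \eta n^{w_*}/2\le n^{w_*}/2\le r/2<r$ (for $\theta$ small, recalling $\eta\le 1$ and $r\ge (n/\theta)^{w_*}$), the starting point $x$ lies in $B(z,r)$, so the exit time from $B(z,r)$ equals the hitting time of $A$. Applied with $\delta:=c_0/2$, Lemma~\ref{Lem-1} produces a constant $\kappa=\kappa(c_0/2)>0$, and for any $a\in A$ the triangle inequality gives
$$\|x^{-1}a\|_{\mathfrak F_{G,2}}\ge \|z^{-1}a\|_{\mathfrak F_{G,2}}-\|z^{-1}x\|_{\mathfrak F_{G,2}}\ge r-\tfrac{\eta}{2}n^{w_*},$$
with the analogous bound for $\|y^{-1}a\|_{\mathfrak F_{G,2}}$. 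I would therefore choose $\theta\in(0,1)$ small enough that $\theta^{-w_*}\ge \kappa+\eta/2$; then the hypothesis $r\ge (n/\theta)^{w_*}$ forces $\min\{\|x^{-1}A\|_{\mathfrak F_{G,2}},\|y^{-1}A\|_{\mathfrak F_{G,2}}\}\ge \kappa n^{w_*}$, so Lemma~\ref{Lem-1} yields
$$\Pp_x(X_n=y,\tau_{B(z,r)}\le n)\le \frac{c_0}{2\mathbf F(n)}.$$
Subtracting gives the claim with $c_1=c_0/2$.

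I do not expect a real obstacle: the substance of the argument is packaged into the two ingredients cited above. The only points requiring attention are (i) checking that the triangle inequality is available, which is exactly why the paper went through Proposition~\ref{pro-variation}(c) to replace the quasi-norm $\|\cdot\|_{\mathfrak F_G}$ by the \emph{norm} $\|\cdot\|_{\mathfrak F_{G,2}}$, and (ii) the quantitative choice of $\theta$ in terms of $\kappa(c_0/2)$, $\eta$, and $w_*$, which is a one-line optimization.
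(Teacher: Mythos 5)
Your proof is correct and takes essentially the same approach as the paper: the main term is controlled by the near-diagonal bound of Theorem~\ref{thm0}(4), the error term by Lemma~\ref{Lem-1} via the hitting time of $B(z,r)^c$, and $\theta$ is chosen so that $r\ge(n/\theta)^{w_*}$ pushes $B(z,r)^c$ at distance at least $\kappa n^{w_*}$ from both $x$ and $y$ (the paper picks $\theta=(\kappa+1)^{-1/w_*}$, which satisfies your condition $\theta^{-w_*}\ge\kappa+\eta/2$ since $\eta\le 1$). One cosmetic slip: your first display should read $\ge$ rather than $=$, since $\{\sup_{0\le k\le n}\|z^{-1}X_k\|_{\mathfrak F_{G,2}}\le r\}$ properly contains $\{\tau_{B(z,r)}>n\}=\{\sup_{0\le k\le n}\|z^{-1}X_k\|_{\mathfrak F_{G,2}}<r\}$, but this only helps the lower bound you want.
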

\begin{proof}
Note that since $\| x^{-1}y\|_{\mathfrak F_{G,2}} \leq \| x^{-1}z\|_{\mathfrak F_{G,2}} + \| y^{-1}z\|_{\mathfrak F_{G,2}} \leq \eta n^{\weight_*}$,
we have by  Theorem \ref{thm0}(4) that
$$
\Pp_x(X_n=y)\ge \frac{c_0}{\mathbf F(n)}.
$$
 Let $\delta=c_0/2$ in Lemma \ref{Lem-1}. Then for all
$r>(\kappa+1)n^{\weight_*}$ (where $\kappa$ is the constant in Lemma
\ref{Lem-1} associated with this $\delta$), we have
by Lemma \ref{Lem-1} that
\begin{align*}&\Pp_x(X_n=y,\sup_{0\le k\le n}\|z^{-1}X_k\|_{\mathfrak F_{G,2}}\le r)\\
&=\Pp_x(X_n=y)-\Pp_x(X_n=y, \sup_{0\le k\le
n}\|z^{-1}X_k\|_{\mathfrak F_{G,2}}> r)  \\
&\geq \Pp_x(X_n=y)-\Pp_x(X_n=y, \sigma_{B(z,r)^c}\le n)  \\
&\ge c_0 /(2\mathbf F(n)),
\end{align*} where in the last inequality we used the facts that $$\|x^{-1}B(z,r)^c\|_{\mathfrak F_{G,2}}\ge r-\|x^{-1}z\|_{\mathfrak F_{G,2}}\ge (\kappa+\eta)n^{\weight_*}- \eta n^{\weight_*}/2\ge \kappa n^{\weight_*}$$ and, similarly, $\|y^{-1}B(z,r)^c\|_{\mathfrak F_{G,2}}\ge \kappa n^{\weight_*}.$
This proves the desired assertion
with $\theta=(\kappa+1)^{-1/\weight_*}.$ \end{proof}

The following is
an immediate consequence of the last proposition.
\begin{cor}\label{Cor-1} Let $\eta \in (0, 1]$ be the constant in
Theorem  $\ref{thm0}(4)$.
There exist constants $\theta\in (0,1)$ and $c_1>0$ such that
for  every $z\in G$,   $n\geq 1$,
  and $A\subset B(z, \eta n^{\weight_*}/2)$,
$$
\Pp_x \left(X_n\in A,  \,  \tau_{B(z, (n/\theta)^{\weight_*})}>n \right)
\ge c_1 \frac{ \# A}{\mathbf F(n)} \quad \hbox{ for every }
x\in B(z, \eta n^{\weight_*}/2).
$$
 \end{cor}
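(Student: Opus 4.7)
The plan is to derive the corollary directly by summing the pointwise lower bound of the preceding proposition over the points of $A$. The key observation is that for distinct $y,y'\in A$, the events
$$E_y:=\{X_n=y,\;\sup_{0\le k\le n}\|z^{-1}X_k\|_{\mathfrak F_{G,2}}\le r\}$$
are pairwise disjoint, so summing probabilities is elementary.

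First, I would fix $\theta\in(0,1)$ to be the constant produced by the previous proposition, and choose a radius $r=r(n)$ strictly between the proposition's threshold $(n/\theta)^{w_*}$ and the ball radius appearing in the corollary. Concretely, since the statement of the corollary allows us to weaken $\theta$, I would pass to a slightly smaller constant $\theta'\in(0,\theta)$ (depending only on $\theta$ and $w_*$) so that the proposition's admissible value $r=(n/\theta)^{w_*}$ satisfies $r<(n/\theta')^{w_*}$ for all $n\ge 1$; the corollary will then be stated with $\theta'$ in place of $\theta$. For any $x\in B(z,\eta n^{w_*}/2)$ and any $y\in A\subset B(z,\eta n^{w_*}/2)$, the hypotheses of the preceding proposition are met, so
$$\Pp_x(E_y)\ge \frac{c_1}{\mathbf F(n)}.$$

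Next, I would observe that since $r<(n/\theta')^{w_*}$, the event $\{\sup_{0\le k\le n}\|z^{-1}X_k\|_{\mathfrak F_{G,2}}\le r\}$ is contained in $\{\sup_{0\le k\le n}\|z^{-1}X_k\|_{\mathfrak F_{G,2}}<(n/\theta')^{w_*}\}$, which by the definition of $B(\cdot,\cdot)$ (open ball, strict inequality) and $\tau$ is precisely $\{\tau_{B(z,(n/\theta')^{w_*})}>n\}$. Summing over $y\in A$, using the pairwise disjointness of the $E_y$'s, yields
$$\Pp_x\left(X_n\in A,\;\tau_{B(z,(n/\theta')^{w_*})}>n\right)\ge \sum_{y\in A}\Pp_x(E_y)\ge c_1\,\frac{\#A}{\mathbf F(n)}.$$

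There is really no obstacle here; the only subtle point is the strict-versus-non-strict inequality in the definition of the open ball $B(z,\cdot)$, which is handled by the mild shrinkage of the constant $\theta$ described above.
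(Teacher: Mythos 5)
Your proposal is correct and is exactly the argument the paper has in mind when it remarks that the corollary is ``an immediate consequence of the last proposition'': apply the proposition to each $y\in A$, sum over the pairwise disjoint events $\{X_n=y,\,\sup_k\|z^{-1}X_k\|_{\mathfrak F_{G,2}}\le r\}$, and absorb the strict-versus-nonstrict boundary issue into a harmless decrease of $\theta$. Nothing is missing.
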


The following is a key proposition concerning
the space-time process $Z_k=(V_k,X_k) = (V_0+k, X_k)$
on $\Upsilon=\Z_+ \times G$
discussed before Lemma \ref{Lem-1},
which will be used in the proof  of Theorem \ref{th-Holder}.

\begin{pro}\label{Ke}
Let $\eta \in (0, 1]$ be the constant in Theorem $\ref{thm0}(4)$,
and
$\theta\in (0,1)$ be the constant in Corollary $\ref{Cor-1}$. Let
$m$ be the counting measure on $\Upsilon$, and set
\begin{equation}\label{e:6.2}
C_0 =
2^{3+4\weight_*} / (\eta (\theta \wedge \theta^{\weight_*}))
\quad \hbox{ and } \quad C_1 = \eta \theta^{\weight_*}/
2^{1+4\weight_*}.
\end{equation}
  For every $\delta\in (0,1)$
 and $\gamma \geq C_0$, there is a constant
$c_0=c_0(\delta, C_0)>0$  (independent of $\gamma$)
such that for any $x_0\in G$, $n_0\geq 0$,
$R>1$ with $\theta R^{1/\weight_*}\ge1$  and
$$
A\subset \left[ n_0+  [ \tfrac12 \theta  (\gamma R)^{1/\weight_*}]  , n_0+ [ \tfrac12 \theta  (\gamma R)^{1/\weight_*}] + [ \theta  R^{1/\weight_*}] \right]
\times B(x_0,   C_1\gamma R)
$$
 satisfying
$$
\frac{ m (A)}{   [ \theta  R^{1/\weight_*}]  \cdot \# B(x_0, C_1\gamma  R) }\ge \delta,
$$
we have
$$
\Pp_{z}  (\sigma_A<\tau_{Q(n_0,x_0, \gamma R)})\ge c_0 \quad \hbox{for every } z \in  \left[ n_0, n_0+[\theta R^{1/\weight_*}] \right] \times B(x_0, R).
$$
 Here  $Q(n_0,x_0, r):=\left[ n_0,n_0+ [\theta r^{1/\weight_*}] \right]\times B(x_0,   r)$, and
 by abusing the notation, $\sigma_A := \inf\{k\geq 1: Z_k\in A\}$.
\end{pro}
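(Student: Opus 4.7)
The plan is to adapt the Krylov--Safonov occupation-time argument to the present parabolic / space--time setting, as in the proof of \cite[Theorem~4.9]{BK} and its elliptic ancestor. Fix $z=(k_z,x_z)$ with $k_z\in[n_0,n_0+[\theta R^{1/w_*}]]$ and $x_z\in B(x_0,R)$. Let $J$ denote the time range of $A$, and for $j\in J$ put $A_j=\{y\in G:(n_0+j,y)\in A\}$ and $n_j:=n_0+j-k_z$, so that $Z_{n_j}\in A$ iff $X_{n_j}\in A_j$. With $\gamma\ge C_0$ as in \eqref{e:6.2} one checks that $\tfrac14\theta(\gamma R)^{1/w_*}\le n_j\le\theta(\gamma R)^{1/w_*}$ uniformly in $j\in J$ and in admissible $k_z$, and the explicit values of $C_0$ and $C_1$ are engineered so that, for every such $j$,
$$B(x_0,R)\cup B(x_0,C_1\gamma R)\subseteq B(x_0,\eta n_j^{w_*}/2)\quad\text{and}\quad B(x_0,(n_j/\theta)^{w_*})\subseteq B(x_0,\gamma R).$$
I will bound the expected occupation time $\Ee_z[\sum_{k=0}^{\tau_Q-1}\mathbf{1}_A(Z_k)]$ of $A$ before exit from $Q$ both from above and from below, and then extract $\Pp_z(\sigma_A<\tau_Q)$ via the strong Markov property at $\sigma_A$.

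For the lower bound, Corollary~\ref{Cor-1} applied with base point $x_0$, starting point $x_z$, and $n=n_j$ gives
$$\Pp_{x_z}\!\left(X_{n_j}\in A_j,\ \tau_{B(x_0,(n_j/\theta)^{w_*})}>n_j\right)\ge c_1\,\frac{\#A_j}{\mathbf F(n_j)}.$$
On this event the time coordinate $V_k=k_z+k$ never leaves $[n_0,n_0+[\theta(\gamma R)^{1/w_*}]]$ up to $k=n_j$, so the event is contained in $\{Z_{n_j}\in A,\tau_Q>n_j\}$. Since Theorem~\ref{thm0}(1) and regular variation of $\mathbf F$ give $\mathbf F(n_j)\asymp\mathbf F((\gamma R)^{1/w_*})\asymp\#B(x_0,C_1\gamma R)$, summing over $j\in J$ and invoking the density hypothesis on $A$ yields
$$\Ee_z\!\left[\sum_{k=0}^{\tau_Q-1}\mathbf{1}_A(Z_k)\right]\ge c_2\,\frac{m(A)}{\#B(x_0,C_1\gamma R)}\ge c_3\,\delta\,[\theta R^{1/w_*}].$$

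For the upper bound, the strong Markov property at $\sigma_A$ gives $\Ee_z[\sum_{k=0}^{\tau_Q-1}\mathbf{1}_A(Z_k)]=\Ee_z[\mathbf{1}_{\{\sigma_A<\tau_Q\}}\Ee_{Z_{\sigma_A}}[\sum_{k=0}^{\tau_Q-1}\mathbf{1}_A(Z_k)]]$, and for any starting point $z'=(k',x')\in A$ the inner sum is supported on the at most $[\theta R^{1/w_*}]+1$ times $k\ge0$ for which $k'+k$ is still in the time range of $A$; since each summand is bounded by $1$, the inner expectation is at most $2[\theta R^{1/w_*}]$ (using $\theta R^{1/w_*}\ge1$). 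Combining the two bounds gives $\Pp_z(\sigma_A<\tau_Q)\ge c_3\delta/2$, independent of $\gamma$, as required. The only real technical obstacle is the geometric bookkeeping: one must verify that $C_0$ and $C_1$ as in \eqref{e:6.2} produce all of the ball inclusions used above \emph{uniformly} over $j\in J$ and admissible $k_z$, even though $j$ and $k_z$ each range over a slab of thickness $[\theta R^{1/w_*}]$; the factors $2^{3+4w_*}$ in $C_0$ and $2^{-1-4w_*}$ in $C_1$ are precisely the slack needed to absorb these fluctuations and to keep $n_j$ comparable to $\theta(\gamma R)^{1/w_*}$ with absolute constants.
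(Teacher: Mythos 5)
Your argument is correct, but it takes a genuinely different route from the one in the paper. The paper proves Proposition~\ref{Ke} with a pigeonhole argument: since $m(A)/[\theta R^{1/w_*}]\ge\delta\cdot\#B(x_0,C_1\gamma R)$, there must exist a \emph{single} time index $k$ in the time range of $A$ for which the slice $A_k=\{x:(k,x)\in A\}$ satisfies $\#A_k\ge\delta\cdot\#B(x_0,C_1\gamma R)$. One then checks exactly the geometric facts you list (that $k-n$ is comparable to $[\theta(\gamma R)^{1/w_*}]$, that $\eta(k-n)^{w_*}/2\ge C_1\gamma R\ge 4R$, that $((k-n)/\theta)^{w_*}<\gamma R$, and that $\#B(x_0,\gamma R)\asymp\mathbf F(k-n)$) and applies Corollary~\ref{Cor-1} once, to this single slice, obtaining $\Pp_{(n,x)}(\sigma_A<\tau_{Q(n_0,x_0,\gamma R)})\ge\Pp_x(X_{k-n}\in A_k,\tau_{B(x_0,\gamma R)}>k-n)\ge c_0$ directly. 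You instead run the full Krylov--Safonov occupation-time scheme: lower-bound $\Ee_z\bigl[\sum_{k=0}^{\tau_Q-1}\mathbf 1_A(Z_k)\bigr]$ by summing the Corollary~\ref{Cor-1} bound over \emph{all} time slices $j$, upper-bound the same expectation by $2[\theta R^{1/w_*}]\,\Pp_z(\sigma_A<\tau_Q)$ via the strong Markov property at $\sigma_A$ together with the trivial observation that the deterministic time coordinate can stay in the time range of $A$ for at most $[\theta R^{1/w_*}]+1$ steps, and divide. The geometric bookkeeping (uniform ball inclusions and comparability of $\mathbf F(n_j)$ over the whole slab of indices $j$ and starting times $k_z$, not just for one $k$) is the same in spirit but needs to hold uniformly, which it does; the needed ranges $\tfrac14\theta(\gamma R)^{1/w_*}\le n_j\le\theta(\gamma R)^{1/w_*}$ and the inclusions you state are correct given \eqref{e:6.2}. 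The paper's pigeonhole is slightly shorter and exploits the discrete time structure; your occupation-time version is the more standard and more robust template (it does not rely on being able to pick a single good slice), at the cost of an extra strong-Markov step. Both give the same conclusion with constants independent of $\gamma$.
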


\begin{proof} First note that
by the definition \eqref{e:6.2} of $C_0$,
$C_0 \ge 8^{\weight_*+1} / (\eta (\theta \wedge \theta^{\weight_*})$,
 and so  for any $\gamma \geq C_0$,
$$
 [  \theta (\gamma R)^{1/\weight_*}] > 8 [\theta R^{1/\weight_*}]  .
 $$
  Let $A$ be the subset of $\Upsilon=\Z_+ \times G$ in the proposition.
 For $j\in \mathbb{N}$, let $A_j=\{x \in G: (j, x)\in A\}$.
 There exists some
 $$
 k\in \left[ n_0+  [ \tfrac12 \theta  (\gamma R)^{1/\weight_*}]  , n_0+ [ \tfrac12 \theta  (\gamma R)^{1/\weight_*}] + [ \theta  R^{1/\weight_*}] \right]
 $$
  so  that
\begin{equation}\label{eq:deib2}\begin{split}
 \#A_k
 &\geq  \frac{m(A)}{ [ \theta   R^{1/\weight_*}]} \geq   \delta  \cdot \# B(x_0,  C_1\gamma  R).
\end{split}
\end{equation}
 Note that for any $z=(n, x)\in  \left[ n_0, n_0+[\theta R^{1/\weight_*}] \right] \times B(x_0, R)$,
 \begin{equation}\label{e:6.5}
 \frac58 [ \theta (\gamma R)^{1/\weight_*}] \geq k-n \geq
 \frac18 [ \theta (\gamma R)^{1/\weight_*}].
\end{equation}
In particular,
\begin{equation}\label{e:6.6}
\eta (k-n)^{\weight_*}/2 \geq C_1 \gamma R \geq 4R \quad \hbox{  and } \quad ((k-n)/\theta)^{\weight_*} <\gamma  R.
\end{equation}
It follows from Theorem \ref{thm0}(1) and \eqref{e:6.5} that  $ \# B(x_0, \gamma R) \asymp {\mathbf F(k-n)}$.
Hence we  have by  \eqref{eq:deib2}, \eqref{e:6.6}  and Corollary \ref{Cor-1} that
$$
\Pp_{(n  ,x)} \left(\sigma_A<\tau_{Q(n_0 ,x_0, \gamma R)} \right)
\ge\Pp_{x}(X_{k-n }\in A_k,
\tau_{B(x_0, \gamma R)}>k-n )
\ge c_0.
$$
This completes the proof of the proposition.
\end{proof}

 \medskip

The following is a special case of the L\'evy system formula for Markov chains.
For any $(k,x)\in \Upsilon$ and $A\subset \Upsilon$,
define $N_A(k,x)=\Pp_{(k,x)}(X_1\in A(k+1))$ if $(k,x)\notin A$ and
$0$ otherwise.

\begin{lem}\label{levy} For the $\Upsilon$-valued Markov chain $(V_k,X_k)$, let $A\subset \Upsilon$ and
$$J_n={\bf1}_A(V_n,X_n)-{\bf1}_A(V_0,X_0)-\sum_{k=0}^{n-1} N_A(V_k,X_k).
$$
Then $\{J_{n\wedge \sigma_A}; n\in \mathbb{N}\}$ is a martingale.
\end{lem}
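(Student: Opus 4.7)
The plan is to verify the martingale property by a direct computation of conditional increments, exploiting the Markov property of $Z_k=(V_k,X_k)$ and the role of the stopping time $\sigma_A$ in correcting the discrepancy between the true one-step compensator of $\mathbf 1_A(Z_k)$ and the function $N_A$ defined to vanish on $A$.

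First I would fix $n\ge 0$ and aim to show that $\mathbf E[J_{(n+1)\wedge \sigma_A}-J_{n\wedge \sigma_A}\mid \mathscr F_n]=0$. Splitting on the two events $\{n\ge \sigma_A\}$ and $\{n<\sigma_A\}$ (both of which are $\mathscr F_n$-measurable), the first contributes nothing since on $\{n\ge \sigma_A\}$ the stopped process has already frozen, so $J_{(n+1)\wedge\sigma_A}=J_{n\wedge\sigma_A}=J_{\sigma_A}$. The substance of the argument is therefore on $\{n<\sigma_A\}$, where $J_{(n+1)\wedge\sigma_A}-J_{n\wedge\sigma_A}=J_{n+1}-J_n=\mathbf 1_A(Z_{n+1})-\mathbf 1_A(Z_n)-N_A(Z_n)$.

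The key observation is that $\{n<\sigma_A\}=\{Z_1,\dots,Z_n\notin A\}$, so in particular $Z_n\notin A$ on this event. By the very definition of $N_A$, this means $\mathbf 1_A(Z_n)=0$ and $N_A(Z_n)=\mathbf P_{Z_n}(X_1\in A(V_n+1))$. On the other hand, since $Z_{n+1}=(V_n+1,X_{n+1})$, the Markov property gives
\[
\mathbf E[\mathbf 1_A(Z_{n+1})\mid \mathscr F_n]=\mathbf P_{Z_n}(Z_1\in A)=\mathbf P_{Z_n}(X_1\in A(V_n+1)),
\]
which matches $N_A(Z_n)$ exactly. Combining these identities shows that the conditional increment vanishes on $\{n<\sigma_A\}$ as well.

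There is no real obstacle here; the only subtlety worth emphasizing is why the stopping at $\sigma_A$ is necessary. Without it, one would instead need the true compensator $k\mapsto P\mathbf 1_A(Z_k)-\mathbf 1_A(Z_k)$, which on $A$ equals $\mathbf P_{Z_k}(X_1\in A(V_k+1))-1\neq 0=N_A$. Stopping at $\sigma_A$ ensures that for every $k$ in the relevant range $\{0,\dots, (n\wedge\sigma_A)-1\}$ we have $k<\sigma_A$, hence $Z_k\notin A$, hence $N_A(Z_k)$ coincides with the correct compensator. A short remark along these lines, together with the two-case computation above and an appeal to boundedness of $\mathbf 1_A$ to justify integrability, completes the argument.
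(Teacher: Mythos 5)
The paper itself offers no proof of this lemma; it simply cites \cite[Lemma 3.2]{BL}, so there is no internal argument to compare against. Your proposal is the standard direct verification of the martingale identity, and its structure is correct: split on the $\mathscr F_n$-measurable events $\{n\ge\sigma_A\}$ and $\{n<\sigma_A\}$, note that the stopped process is frozen on the former, and on the latter use the Markov property of $Z$ to identify $\Ee\left[\mathbf 1_A(Z_{n+1})\mid\mathscr F_n\right]$ with $\Pp_{Z_n}\left(X_1\in A(V_n+1)\right)$, which equals $N_A(Z_n)$ precisely because $Z_n\notin A$.

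There is, however, one step that does not hold as written. From $\{n<\sigma_A\}=\{Z_1,\dots,Z_n\notin A\}$ you infer that $Z_n\notin A$ on that event, but since $\sigma_A=\inf\{k\ge 1:Z_k\in A\}$ this inference is valid only for $n\ge 1$; for $n=0$ the set $\{0<\sigma_A\}$ is all of $\Omega$ and says nothing about $Z_0$. If $Z_0\in A$, then $J_0=0$ while
$\Ee\left[J_{1\wedge\sigma_A}\mid\mathscr F_0\right]=\Ee\left[\mathbf 1_A(Z_1)\mid\mathscr F_0\right]-\mathbf 1_A(Z_0)-N_A(Z_0)=\Pp_{Z_0}(Z_1\in A)-1$,
which is generically nonzero, so the martingale identity already fails at the first step. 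The lemma is therefore to be read with the implicit hypothesis $Z_0\notin A$ (automatic in the paper's only application, Proposition~\ref{Ee}, where $A=\Z_+\times B(x,s)^c$ and the chain starts at $x\in B(x,s)$); a careful write-up should either state that hypothesis explicitly or dispose of the case $n=0$ separately. Apart from this point your argument is complete and matches the usual L\'evy system computation.
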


\begin{proof}See \cite[Lemma 3.2]{BL} for the proof. \end{proof}
We also need the following lemma.

\begin{lem}\label{uppexit} For $r>0$, there exists a constant $c_1 >0$ such that
$$\Ee_x(\tau_{B(x,r)})\le c_1r^{1/\weight_*}.$$
\end{lem}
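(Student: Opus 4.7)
The plan is to exploit the heavy tail of $\mu$ in the norm $\|\cdot\|_{\mathfrak F_{G,2}}$ to force the walk to leave the ball $B=B(x,r)$ quickly via a single large jump. By left-invariance of the random walk on $G$, one may take $x=e$. The first step uses that $\|\cdot\|_{\mathfrak F_{G,2}}$ is a genuine norm (Theorem \ref{thm0}): if $X_{k-1}\in B$ and the i.i.d.\ increment $\xi_k$ satisfies $\|\xi_k\|_{\mathfrak F_{G,2}}\ge 2r$, then the triangle inequality forces $\|X_k\|_{\mathfrak F_{G,2}}\ge \|\xi_k\|_{\mathfrak F_{G,2}}-\|X_{k-1}\|_{\mathfrak F_{G,2}}>r$, so $X_k\notin B$. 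Setting $p_r:=\mu(\{h:\|h\|_{\mathfrak F_{G,2}}\ge 2r\})$, it follows that $\{\tau_B>n\}\subset\{\|\xi_k\|_{\mathfrak F_{G,2}}<2r\text{ for }k=1,\dots,n\}$, whence
$$\Pp_e(\tau_B>n)\le (1-p_r)^n\quad\text{and}\quad \Ee_e[\tau_B]=\sum_{n\ge 0}\Pp_e(\tau_B>n)\le 1/p_r.$$
(The same bound can be obtained cleanly from the L\'evy system of Lemma \ref{levy} with $A=\Z_+\times B^c$ and optional stopping at $\sigma_A=\tau_B$, using that $N_A(k,y)\ge p_r$ for every $y\in B$.)

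The crux is then to prove $p_r\ge c\,r^{-1/w_*}$. Pick any $i\in\{1,\dots,k\}$ with $p_i>0$ (if no such $i$ exists, then $\mu$ is finitely supported and the statement is trivial once $r$ exceeds the diameter of $\mathrm{supp}\,\mu_0$); since $\mu\ge p_i\mu_i$, it suffices to show $\mu_i(\{h\in H_i:\|h\|_{\mathfrak F_{G,2}}\ge 2r\})\gtrsim r^{-1/w_*}$. By the construction of the weight system in Definition \ref{DefxiG}, $F_{G,s}\ge \Phi_i^{-1}$ for every $s\in S_i$, and combined with $\|\cdot\|_{\mathfrak F_{G,2}}\asymp \|\cdot\|_{\mathfrak F_G}^{w_*}$ from Theorem \ref{thm0}, one expects the two-sided comparison
$$\|h\|_{\mathfrak F_{G,2}}\asymp \Phi_i(|h|_i)^{w_*}\qquad\text{for }h\in H_i.$$
Plugging this into the tail formula \eqref{eq:feobaqz}, namely $\mu_i(\{|h|_i\ge M\})\asymp 1/\phi_i(M)$, at the level $M\asymp \Phi_i^{-1}(r^{1/w_*})$, and using the relation $\phi_i\circ\Phi_i^{-1}(t)\asymp t$ (up to slowly varying factors in the borderline case $\mathrm{index}(\phi_i)\ge 2$; cf.\ Remark \ref{rem-Phi}), gives the required lower bound on $p_r$. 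Any slowly varying deficit is absorbed by the strict inequality $w_*<\min_s w(s)$ from Proposition \ref{pro-variation}(c), working first with a slightly smaller auxiliary exponent before passing back.

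The main obstacle is justifying the \emph{lower} bound $\|h\|_{\mathfrak F_G}\gtrsim \Phi_i(|h|_i)$ for $h\in H_i$. The upper bound $\|h\|_{\mathfrak F_G}\le C\Phi_i(|h|_i)$ is immediate from using a pure $S_i$-word of length $|h|_i$, but a general word over $\Sigma_G\cup\Sigma_G^{-1}$ representing $h$ may exploit commutator-style shortcuts through generators outside $S_i$, as illustrated by the Heisenberg computations around Example \ref{exa-Heisenberg}. A clean workaround that avoids any element-wise norm comparison is to upper-bound $\mu_i(B(e,2r))$ directly, by combining the volume estimate $\#B(e,2r)\asymp \mathbf F((2r)^{1/w_*})$ from Corollary \ref{cor-polvol} with the pointwise asymptotic formula for $\mu_i$, and then pass to the complement to read off the desired lower bound on $p_r$.
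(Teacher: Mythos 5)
Your ``single big jump'' strategy cannot establish the Lemma in the required generality. The Lemma must hold for every $\mu\in\mathcal P_{\preceq}(G,\mbox{reg})$, including finitely supported $\mu$ and measures whose indices all satisfy $\alpha_i\ge 2$, and in those cases the lower bound $p_r\gtrsim r^{-1/w_*}$ that your argument hinges on is false. Concretely: if $k=0$ then $\mu=\mu_0$ is finitely supported, so $p_r=0$ for every $r$ larger than the $\|\cdot\|_{\mathfrak F_{G,2}}$-diameter of $\mathrm{supp}\,\mu_0$ and the inequality $\Ee_e[\tau_B]\le 1/p_r$ is vacuous; the statement is \emph{not} ``trivial'' for large $r$ --- it is precisely the nontrivial diffusive exit-time bound $\asymp r^{1/w_*}$, which the single-jump mechanism cannot see. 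More generally, whenever $\alpha_i>2$ one has $\Phi_i(t)\asymp t^2$ while $\phi_i(t)\asymp t^{\alpha_i}$, so even granting the two-sided comparison $\|h\|_{\mathfrak F_G}\asymp\Phi_i(|h|_i)$ on $H_i$ (which you correctly flag as unjustified), the tail $\mu_i(\{h\in H_i:\|h\|_{\mathfrak F_{G,2}}\ge 2r\})\asymp 1/\phi_i(\Phi_i^{-1}(r^{1/w_*}))\asymp r^{-\alpha_i/(2w_*)}$ is $o(r^{-1/w_*})$; this is a polynomial shortfall, not a slowly-varying one, so the wiggle room $w_*<\min_s w(s)$ from Proposition \ref{pro-variation}(c) cannot repair it. This is also why the paper's \eqref{e:decay} is stated only as an upper bound and has no matching lower bound. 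Your proposed ``workaround'' via the volume of $B(e,2r)$ does not sidestep the issue: to sum $\mu_i$ over $B(e,2r)^c\cap H_i$ you still need to know which $|h|_i$ lie in that set, which is exactly the norm comparison you were trying to avoid, and even then the computation only delivers the $r^{-1/w_*}$ rate in the purely sub-diffusive regime $\alpha_i<2$.

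The paper's proof uses a different mechanism that is insensitive to \emph{how} the walk escapes (single jump versus diffusion). It combines the uniform on-diagonal bound $\|\mu^{(n)}\|_\infty\asymp 1/\mathbf F(n)$ from \eqref{e:5.4} with the volume estimate $\#B(x,r)\asymp \mathbf F(r^{1/w_*})$ from Theorem \ref{thm0}(1) and the trivial inclusion $\{\tau_{B(x,r)}>n\}\subseteq\{X_n\in B(x,r)\}$ to get, uniformly in the starting point $y$,
$$
\Pp_y(\tau_{B(x,r)}>n)\le \#B(x,r)\,\|\mu^{(n)}\|_\infty\le c\,\frac{\mathbf F(r^{1/w_*})}{\mathbf F(n)}.
$$
Choosing $n=c_2 r^{1/w_*}$ with $c_2$ large (possible since $\mathbf F$ is regularly varying of positive index) drives the right side below $1/2$; the Markov property at times $kn$ then yields $\Pp_x(\tau_{B(x,r)}>kn)\le 2^{-k}$, and summing gives $\Ee_x\tau_{B(x,r)}\le cn\asymp r^{1/w_*}$. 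This is exactly what the adapted geometry buys: the on-diagonal decay rate and the volume growth match at the scale $n\asymp r^{1/w_*}$, so the linear exit-time bound follows without any analysis of which component of $\mu$ does the escaping. If you want to rescue your approach, you would need to replace the one-jump event by an event that also captures the diffusive escape mechanism, which essentially forces you back to an on-diagonal/volume argument of the paper's type.
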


\begin{proof}
By \eqref{e:5.4}
and Theorem \ref{thm0}(1),
for any $x,y\in G$ and $n\ge 1$, we have
$$\Pp_y(\tau_{B(x,r)}>n)\le \Pp_y(X_n \in B(x,r))\le \frac{c_1}{\mathbf F(n)}  \mathbf F(r^{1/\weight_*}).$$ By taking $n=c_2 r^{1/{\weight_*}}$ for a proper constant $c_2>0$,
 $\Pp_y(\tau_{B(x,r)}>n)\le 1/2.$ Using the Markov property at time $kn$ for $k=1,2,\cdots$,
\begin{align*}
\Pp_x(\tau_{B(x,r)}>(k+1)n)&\le \Ee_x\left(\Pp_{X_{kn}}(\tau_{B(x,r)}>n); \tau_{B(x,r)}>kn\right)\\
&\le  \frac{1}{2}\Pp_x(\tau_{B(x,r)}>kn).
\end{align*}
By induction,
$$\Pp_x(\tau_{B(x,r)}>kn)\le 2^{-k}$$ for $k=1,2,\cdots$. With this choice of $n$, we obtain that
$$\Ee_x\tau_{B(x,r)}\le\sum_{k=1}^\infty kn\Pp_x((k-1)n<\tau_{B(x,r)}\le kn)\le n\sum_{k=1}^\infty k 2^{k-1}=:cn,$$ which proves our result.
\end{proof}
\begin{pro}\label{Ee} There is a constant $c_0>0$ such that for any $s\ge 2r$ and $x\in G$,
$$\Pp_{x}(X_{\tau_{B(x,r)}}\notin B(x,s))\le c_0\left(\frac{r}{s}\right)^{1/\weight_*}.$$ \end{pro}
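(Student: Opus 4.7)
The plan is to combine three ingredients: the L\'evy system formula (Lemma \ref{levy}) giving the exit distribution in terms of single-step jump probabilities; a tail bound for $\mu$ in the norm $\|\cdot\|_{\mathfrak F_{G,2}}$; and the exit time estimate of Lemma \ref{uppexit}.

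Set $\tau:=\tau_{B(x,r)}$. Applying Lemma \ref{levy} to the space-time set $A=\mathbb Z_+\times B(x,s)^c$ (and using $B(x,r)\subset B(x,s)$, which forces $\sigma_A=\tau$ on the event $\{X_\tau\notin B(x,s)\}$), or equivalently the Markov property of $(X_n)$, gives the L\'evy-system identity
$$\Pp_x\bigl(X_{\tau}\notin B(x,s)\bigr)=\Ee_x\!\left[\sum_{k=0}^{\tau-1}\mu\bigl(X_k^{-1}B(x,s)^c\bigr)\right].$$
For each $k<\tau$ we have $\|x^{-1}X_k\|_{\mathfrak F_{G,2}}<r\le s/2$; since $\|\cdot\|_{\mathfrak F_{G,2}}$ is a genuine norm (Theorem \ref{thm0}), the triangle inequality gives $X_k^{-1}B(x,s)^c\subseteq\{g:\|g\|_{\mathfrak F_{G,2}}\ge s/2\}$, so
$$\mu\bigl(X_k^{-1}B(x,s)^c\bigr)\le\mu\bigl(\{g:\|g\|_{\mathfrak F_{G,2}}\ge s/2\}\bigr).$$

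The main step is to establish the tail bound
\begin{equation}\label{tailb}
\mu\bigl(\{g:\|g\|_{\mathfrak F_{G,2}}\ge t\}\bigr)\le C\, t^{-1/w_*} \quad \text{for all } t\ge 1.
\end{equation}
Since $\mu_0$ is finitely supported, only the $\mu_i$ with $i\ge 1$ contribute for large $t$. For $h\in H_i$, reading any word over $S_i$ of length $|h|_i$ in the definition of $\|\cdot\|_{\mathfrak F_G}$, and using $F_{G,s}\ge\Phi_i^{-1}$ for $s\in S_i$ (Definition \ref{DefxiG}), yields $\|h\|_{\mathfrak F_G}\le\Phi_i(|h|_i)$, whence $\|h\|_{\mathfrak F_{G,2}}\le C\,\Phi_i(|h|_i)^{w_*}$ by Theorem \ref{thm0}. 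Combining this with the standard tail bound $\mu_i(\{|h|_i\ge r\})\asymp 1/\phi_i(r)$ (the same annular summation as in \eqref{eq:feobaqz}) and with $\phi_i\ge\Phi_i$ on $[1,\infty)$ (Remark \ref{rem-Phi}) produces $\mu_i(\{\|h\|_{\mathfrak F_{G,2}}\ge t\})\le C'/t^{1/w_*}$ for $t$ large; \eqref{tailb} follows by taking the convex combination (and absorbing small $t$ into the constant).

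Plugging \eqref{tailb} with $t=s/2$ into the identity above, together with the bound $\Ee_x[\tau]\le c\, r^{1/w_*}$ from Lemma \ref{uppexit}, yields
$$\Pp_x\bigl(X_{\tau}\notin B(x,s)\bigr)\le \Ee_x[\tau]\cdot C\,(s/2)^{-1/w_*}\le c_0\,(r/s)^{1/w_*},$$
as desired. The principal technical hurdle is \eqref{tailb}: making the quantitative passage from the intrinsic tail on $H_i$ (governed by $\phi_i$) to a tail estimate in the extrinsic norm $\|\cdot\|_{\mathfrak F_{G,2}}$ on $G$ carrying the uniform exponent $1/w_*$. The L\'evy system reduction and the concluding computation are routine once the tail bound is in hand.
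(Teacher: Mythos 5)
Your proof follows exactly the same route as the paper's: the L\'evy system identity from Lemma \ref{levy}, the bound $\Pp_x(X_\tau\notin B(x,s))\le\Ee_x[\tau_{B(x,r)}]\cdot\sup_{z\in B(x,r)}\mu(z^{-1}B(x,s)^c)$, the exit-time estimate of Lemma \ref{uppexit}, and the tail bound $\mu(\{\|g\|_{\mathfrak F_{G,2}}\ge t\})\lesssim t^{-1/w_*}$. You actually spell out the derivation of the tail bound in more detail than the paper (which only sketches it as "the same line of reasoning as for \eqref{eq:feobaqz}"), and your derivation correctly uses $F_{G,s}\ge\Phi_i^{-1}$ together with $\phi_i\ge\Phi_i$; the only minor gloss is that $S_i$ generates $N_i=N\cap H_i$ rather than $H_i$, but since $N_i$ has finite index in $H_i$ the word lengths are comparable and the argument goes through.
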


\begin{proof}  Applying Lemma \ref{levy}
with $A=\Z_+\times B(x,s)^c$ at stopping times $n\wedge \tau_{B(x,r)}$ and then letting $n\to \infty$,
we see that
\begin{align*}\Pp_{x}(X_{\tau_{B(x,r)}}\notin B(x,s))&=\Ee_{x}\left(\sum_{k=0}^{\tau_{B(x,r)}-1} N_{B(x,s)^c}(X_k)\right)\\\
&=\Ee_{x}\left(\sum_{k=0}^{\tau_{B(x,r)}-1} \sum_{y\in {B(x,s)^c}}\mu(X_k^{-1}y)\right) \\
&\le \Ee_x\tau_{B(x,r)} \sup_{z\in B(x,r)}\sum_{y\in  B(x,s)^c} \mu(z^{-1}y)\\
&\le c_1 r^{1/\weight_*} s^{-1/\weight_*},\end{align*} where in the last
inequality we used Lemma \ref{uppexit} and the fact that
\begin{equation}\label{e:decay}\sum_{\|h\|_{\mathfrak F_{G,2}}\ge r}\mu(h)\le
c_2r^{-1/\weight_*},\quad r>0.\end{equation}
Note that, \eqref{e:decay}
can be obtained following the same line of reasoning  as for  \eqref{eq:feobaqz}, by using
Theorem \ref{thm0}(1), $\|\cdot \|_{\mathfrak
F_{G,2}}\asymp \|\cdot\|_{\mathfrak F_G}^{\weight_*}$, $F_{N,s}=\Phi_i^{-1}$ (see Definition \ref{DefxiN})
and Remark \ref{rem-Phi}.
 \end{proof}

From now, we take $\theta\in (0, 1) $ and
$C_0,C_1$ be the constants in Corollary \ref{Cor-1}
and  \eqref{e:6.2}, respectively. For $n\in \mathbb{N}$, $x\in G$ and $r>1$ with $\theta r^{1/\weight_*}\geq 1$,
define
$$
Q(n,x,r)=[n,n+ [\theta  r^{1/\weight_*}] ]\times B(x,   r) .
$$
The proof of the following theorem is similar to the proofs  of  \cite[Theorem
4.14]{CK} and \cite[Theorem 4.9]{BK}.
We remark that we also took this opportunity to correct an error in   selecting   subsets
$A$ and $A'$ in the proofs of   \cite[Theorem 4.14]{CK} and \cite[Theorem 4.9]{BK}.
Such a correction was previously made in the proof of \cite[Theorem 6.3]{CL18} on H\"older regularity of
caloric functions for certain diffusion processes.
Our proof  is based on
Propositions \ref{Ke} and \ref{Ee}.

\begin{theo} \label{th-Holder}
Let
$C_0>0$ be the constant in \eqref{e:6.2}.
There are constants $C>0$ and $\beta>0$ such that for any
$R>1$
 with $\theta R^{1/\weight_*}\ge 1$ $($where $\theta$ is
the constant in Corollary $\ref{Cor-1}$$)$ and bounded
caloric
function $q$ in
$Q(0,x_0, C_0R)= [0, [\theta  (C_0R)^{1/\weight_*}]  ]  \times B(x_0, C_0     R)$, we have
$$|q(m_1,x)-q(m_2,y)|\le C\|q\|_\infty \left(\frac{|m_1-m_2|^{\weight_*} +\|x^{-1}y\|_{\mathfrak F_{G,2}}}{R}\right)^\beta$$
for all $(m_1,x)$, $(m_2, y)\in Q(0,x_0,R)$,
 and
 $$
 \|q\|_\infty=\sup_{(i,x)\in [0,[\theta (C_0 R)^{1/\weight_*}]]\times G}  q(i,x).
 $$
In particular, we have
$$\sum_{x\in G}|\mu^{(m_1)}(x)-\mu^{(m_2)}(xy)|\le C
\left(\frac{|m_1-m_2|^{\weight_*}+\|y\|_{\mathfrak F_{G,2}}}{  n_0 ^{\weight_*}}\right)^{\beta}$$
for all $y\in G$ and $m_1,m_2\ge  n_0   \ge1$.
\end{theo}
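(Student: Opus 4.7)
The plan is to adapt the oscillation--decay argument of \cite{BK} and \cite{CK} (with the correction from \cite{CL18}) to the long--range walks studied here, using the norm $\|\cdot\|_{\mathfrak F_{G,2}}$ and the scaling $r\mapsto r^{1/w_*}$ that is natural for $\mu$. Fix a base parabolic cylinder $Q(0,x_0,C_0 R)$ on which the caloric function $q$ is defined, pick a large factor $\gamma\ge C_0$, set $r_k=R/\gamma^{k}$, and choose space--time centers $(n_k,x_0)$ and nested cylinders $Q_k=Q(n_k,x_0,r_k)$ so that $Q_{k+1}$ fits inside the sub--cylinder of $Q_k$ to which Proposition~\ref{Ke} applies at scale $r_k$ (this geometric compatibility is where the constants $C_0,C_1,\theta,\gamma$ must be carefully aligned). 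Let $a_k=\sup_{Q_k}q$, $b_k=\inf_{Q_k}q$ and $\omega_k=a_k-b_k$. The goal is a one--step contraction of the form
\[
\omega_{k+1}\le \theta_0\,\omega_k + C\sum_{j\ge 1}\gamma^{-j/w_*}\omega_{k-j}+C\gamma^{-1/w_*}\|q\|_\infty
\]
with $\theta_0\in(0,1)$ and $C$ independent of $k$. A standard iteration lemma in the spirit of \cite[Lemma~4.10]{BK} then yields $\omega_k\le C'\theta_1^k\|q\|_\infty$ for some $\theta_1\in(0,1)$, which is equivalent to the asserted H\"older bound with $\beta\asymp -\log\theta_1/\log\gamma$.

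The core of the recursion is a dichotomy paired with the hitting estimate. Place a sub--cylinder $\widetilde Q_{k+1}$ of spatial scale $C_1 r_k$ inside the ``bottom'' time slab of $Q_k$ demanded by Proposition~\ref{Ke}, and split
\[
A_{k+1}^{\pm}=\bigl\{z\in \widetilde Q_{k+1}:\; q(z)\gtrless \tfrac12(a_k+b_k)\bigr\}.
\]
One of the two sets has counting measure at least $\tfrac12\,m(\widetilde Q_{k+1})$; replacing $q$ by $-q$ if needed, assume it is $A_{k+1}^{-}$. Proposition~\ref{Ke} then produces a constant $c_0>0$, independent of $k$, with $\mathbf P_z(\sigma_{A_{k+1}^{-}}<\tau_{Q_k})\ge c_0$ for every $z\in Q_{k+1}$. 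Since $q$ is caloric and bounded, optional stopping gives
\[
q(z)=\mathbf E_z\bigl[q(Z_{\sigma_{A_{k+1}^{-}}\wedge \tau_{Q_k}})\bigr],
\]
and on $\{\sigma_{A_{k+1}^{-}}<\tau_{Q_k}\}$ the integrand is at most $\tfrac12(a_k+b_k)$.

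The main obstacle is controlling the contribution on the complementary event $\{\tau_{Q_k}<\sigma_{A_{k+1}^{-}}\}$, because $\mu$ has heavy tails and $q$ is bounded only by $\|q\|_\infty$ outside the base cylinder. Decompose $G\setminus B(x_0,r_k)=\bigsqcup_{j\ge 1}\bigl(B(x_0,r_{k-j})\setminus B(x_0,r_{k-j+1})\bigr)$; on the $j$-th annulus the deviation of $q$ from $b_{k-j}$ is at most $\omega_{k-j}$ for $0\le j\le k$, and bounded by $\|q\|_\infty$ beyond the base cylinder. Since $z\in Q_{k+1}\subset B(x_0,r_{k+1})$ with $r_{k+1}\le r_k/2$, Proposition~\ref{Ee} (applied to an appropriately centered ball around $z$ and using the triangle inequality for $\|\cdot\|_{\mathfrak F_{G,2}}$) yields
\[
\mathbf P_z\bigl(X_{\tau_{B(x_0,r_k)}}\notin B(x_0,r_{k-j+1})\bigr)\le C\bigl(r_{k+1}/r_{k-j+1}\bigr)^{1/w_*}=C\gamma^{-j/w_*}.
\]
Summing over $j$ bounds the long--jump contribution by $C\sum_{j\ge 1}\gamma^{-j/w_*}\omega_{k-j}+C\gamma^{-1/w_*}\|q\|_\infty$. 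Combining with the hit probability $c_0$ produces an upper bound for $a_{k+1}$ and, by a symmetric argument using $A_{k+1}^{+}$, a matching lower bound for $b_{k+1}$; subtracting gives the recursion above. Choosing $\gamma$ so large that $c_0/2$ dominates the aggregate past--coefficient $C\sum_{j\ge 1}\gamma^{-j/w_*}$ closes the induction, and the iteration lemma completes the proof of the H\"older estimate.

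The final $L^1$ statement is derived from the H\"older estimate by duality. For any $f:G\to[-1,1]$ the function $h(k,z)=\mathbf E_z[f(X_{m_2-k})]$ is caloric on $[0,m_2]\times G$ with $\|h\|_\infty\le 1$, and a direct calculation using $\mathbf P_z(X_n=y)=\mu^{(n)}(z^{-1}y)$ gives
\[
h(m_2-m_1,e)-h(0,y^{-1})=\sum_x f(x)\bigl[\mu^{(m_1)}(x)-\mu^{(m_2)}(yx)\bigr].
\]
Choose $R=c\,n_0^{w_*}$ with $c>0$ small enough that both $(m_2-m_1,e)$ and $(0,y^{-1})$ lie in $Q(0,e,R)$ and $[0,\lfloor\theta(C_0R)^{1/w_*}\rfloor]\subset[0,m_2]$; outside that regime the trivial bound $\le 2$ already implies the claim. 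Applying the H\"older estimate to $h$ at these two points and taking the supremum over $f$ bounds $\sum_x|\mu^{(m_1)}(x)-\mu^{(m_2)}(yx)|$; a substitution $x\mapsto x^{-1}$ together with the symmetries $\mu^{(n)}(g)=\mu^{(n)}(g^{-1})$ and $\|y^{-1}\|_{\mathfrak F_{G,2}}=\|y\|_{\mathfrak F_{G,2}}$ converts this into the desired bound on $\sum_x|\mu^{(m_1)}(x)-\mu^{(m_2)}(xy)|$ and delivers the stated exponent.
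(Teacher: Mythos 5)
Your proposal follows essentially the same route as the paper's proof: an oscillation-decay iteration on nested parabolic cylinders scaled by $r\mapsto r^{1/w_*}$, driven by the hitting estimate of Proposition~\ref{Ke} applied to the super-level (or sub-level) set of $q$ and the exit estimate of Proposition~\ref{Ee} to control the heavy-tail contributions from outer annuli, followed by a duality argument to pass to the $L^1$ bound for $\mu^{(n)}$. The only differences are organizational: the paper fixes a ratio $\rho$ (with $\rho\le C_0^{-1}$), works with cylinders $Q_i=Q(n_0,x,\rho^i R)$ sharing the same time base, and runs the induction directly to get $\operatorname{osc}_{Q_{2k}}q\le\eta_0^k$, whereas you package the same estimate as a linear recursion for $\omega_k$ with geometric coefficients and invoke an iteration lemma; and the paper's test function for the $L^1$ statement is $q(n,x)=u*\mu^{(N_0-n)}(x)$, which by the symmetry of $\mu$ is the same as your $h$. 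These are cosmetic; your argument, including the dichotomy on $A^\pm_{k+1}$, the placement of the target slab so that Proposition~\ref{Ke} applies, the annular decomposition using $\omega_{k-j}$ for past scales and $\|q\|_\infty$ beyond the base cylinder, and the final choice of $\gamma$, mirrors what the paper does.
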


\begin{proof} Recall that $Z_k=(V_k,X_k)$ is the space-time process of $X$, where $V_k=V_0+k$. Without loss of generality, assume that $0\le q(z)\le \|q\|_\infty=1$ for all
$z\in [0, [\theta (C_0R)^{1/\weight_*}] ]\times G$.

By Proposition \ref{Ke}, there exists a  constant $c_1\in (0, 1)$ such that if $x_0\in G$, $n_0\ge0$,
 $r>1$ with $\theta r^{1/\weight_*}\ge 1$, $\gamma \geq C_0$, and
 $$
A\subset \left[ n_0+ [ \tfrac12 \theta (\gamma r)^{1/\weight_*}] , n_0+ [ \tfrac12 \theta  (\gamma r)^{1/\weight_*}] + [ \theta  r^{1/\weight_*}] \right]
\times B(x_0, C_1 \gamma r)
$$
  satisfying
$$
\frac{ m (A)}{[ \theta  r^{1/\weight_*}] \cdot \# B(x_0,C_1\gamma r)}\ge 1/3,
$$
then
\begin{equation}\label{e1}
\Pp_{z}  (\sigma_A<\tau_{Q(n_0,x_0, \gamma r)})\ge c_1
\quad \hbox{for } z \in  \left[ n_0, n_0+[\theta r^{1/\weight_*}] \right] \times B(x_0, r).
\end{equation}
Here $C_1=
\eta \theta^{\weight_*}/2^{1+4\weight_*}\in (0, 1)$ as defined in \eqref{e:6.2}, where $\eta \in (0, 1]$ is the constant in Theorem $\ref{thm0}(4)$.
 On the other hand, according to Proposition \ref{Ee}, there is a
constant $c_2>0$ such that if any $x\in G$,   and $s\ge 2r$, then
\begin{equation}\label{e2}\Pp_{x}(X_{\tau_{B(x,r)}}\notin B(x,s))\le c_2 (r/s)^{1/\weight_*}.
\end{equation}

Let $\eta_0=1-  (c_1/4)  $ and
$$
 \rho = C_0^{-1}
 \wedge    (\eta_0/2)^{\weight_*}\wedge (c_1\eta_0/(8c_2))^{\weight_*}<1.
$$
Note that
for every $(n_0 ,x)\in Q(0,x_0,R)$, $q$ is
caloric in $Q(n_0,x , R)\subset Q(0,x_0,C_0R)$.
We will show that
\begin{equation}\label{e3}
\sup_{Q(n_0, x , \rho^{2k}
R)}q-\inf_{Q(n_0,x ,\rho^{2k} R)}q\le \eta_0^k
\end{equation} for all
$k\leq K_0$,  where $K_0$ is the largest integer  $k$ so that $\theta  (\rho^{2k}R)^{1/\weight_*}
\ge 1.$
  For notational convenience, we write $Q_i$ for
$Q(n_0,x ,\rho^iR)$ and $\tau_i=\tau_{Q(n_0 , x ,\rho^iR)}$. Define
$$
a_i=\sup_{Q_{2i}}q,\qquad b_i=\inf_{Q_{2i}}q.
$$
Clearly $b_i-a_i\le 1\le \eta_0^i$ for all $i\le 0$.
Now suppose that $b_i-a_i\le \eta_0^i$ for all $i\le k$ and we are going to show that
 $b_{k+1}-a_{k+1}\le \eta_0^{k+1}$ as long as $k+1\leq K_0 $.
Observe that $Q_{2k+2} \subset Q_{2k+1}  \subset Q_{2k}$ and $a_k\le q \le b_k$ on $Q_{2k }$. Define
 \begin{align*}
   Q_{2k+1 }' = &\left[ n_0+ [ \tfrac12 \theta (\rho^{2k +1}R)^{1/\weight_*}],  n_0+ [ \tfrac12\theta  (\rho^{2k+1}R)^{1/\weight_*}] +
   [ \theta (\rho^{2k +2}R)^{1/\weight_*}]
   \right]\\
   & \times B(x_0, C_1 \rho^{2k+1} R)
\end{align*}
and
$$
A'= \left\{ z \in   Q_{2k+1 }' : q(z) \leq (a_k+b_k)/2 \right\}.
$$
It is clear that $Q_{2k+1}'\subset Q_{2k+1}$.
 We may suppose that
 $$
 \frac{m(A')}{ [ \theta (\rho^{2k +2}R)^{1/\weight_*}] \cdot \# B(x_0, C_1 \rho^{2k+1} R)  }\ge 1/2.
 $$
Otherwise, we use $1-q$ instead of $q$. Let $A$ be a compact subset of $A'$ such that
 $$
 \frac{m(A)}{ [ \theta (\rho^{2k +2}R)^{1/\weight_*}] \cdot \# B(x_0, C_1 \rho^{2k+1} R)  } \geq 1/3.
 $$
For any given $\eta_0>0$, pick $z_1,z_2\in Q_{2(k+1)}$ so that $q(z_1)\ge b_{k+1}-\varepsilon$ and $q(z_2)\le a_{k+1}+\varepsilon$. Then, according to \eqref{e1}, \eqref{e2} and \eqref{e3},
\begin{align*}
& b_{k+1}-a_{k+1}-2\varepsilon \\
&\le q(z_1)-q(z_2)\\
&=\Ee_{z_1}[q(Z_{\sigma_A\wedge\tau_{2k+1}})-q(z_2)]\\
&=\Ee_{z_1}[q(Z_{\sigma_A})-q(z_2); \sigma_A<\tau_{2k+1 }]  \\
&\quad + \Ee_{z_1}[q(Z_{\tau_{2k+1}})-q(z_2); \sigma_A> \tau_{2k+1}, Z_{\tau_{2k+1}}\in Q_{2k}]\\
&\quad+ \sum_{i=1}^\infty \Ee_{z_1}[q(Z_{\tau_{2k+1}})-q(z_2); \sigma_A\ge \tau_{2k+1}, Z_{\tau_{2k +1}}\in Q_{2(k-i)}\setminus Q_{2(k+1-i)}]\\
&\le\left(\frac{a_k+b_k}{2}-a_k\right)\Pp_{z_1} (\sigma_A< \tau_{2 k+1 })+(b_k-a_k) \Pp_{z_1} (\sigma_A\ge \tau_{2k+1})\\
&\quad+\sum_{i=1}^\infty (b_{k-i}-a_{k-i}) \Pp_{z_1}( Z_{\tau_{2k+1}}\notin Q_{2(k+1-i)})\\
&\le (b_k-a_k)\left(1-\frac{\Pp_{z_1} (\sigma_A< \tau_{2k+1}) }{2}\right)+\sum_{i=1}^\infty c_2 \eta_0^k (\rho^{1/\weight_*}/\eta_0)^i\\
&\le(1-c_1/2) \eta_0^k+ 2c_2\eta_0^{k-1} \rho^{1/\weight_*}\\
&\le (1-c_1/2) \eta_0^k+c_1\eta_0^k/4= \eta_0^{k+1}.
\end{align*}
Since $\varepsilon$ is arbitrary, we have $b_{k+1}-a_{k+1}\le
\eta_0^{k+1}$, and this proves the claim \eqref{e3}.

\medskip

For $z=(i,x)$ and $w=(j,y)$ in $Q(0,x_0,R)$ with $i\le j$, let $k$
be the largest integer such that
$$\|z-w\|:=  ( |j-i|/\theta )^{\weight_*}+ \|x^{-1}y\|_{\mathfrak F_{G,2}}\le
\rho^{2k}R.
$$
 Then  $\log (\|z-w\|/R)\ge 2(k+1)\log \rho$, $w\in Q(n,x,
\rho^{2k} R)$ and
$$|q(z)-q(w)|\le \eta_0^k=e^{k\log \eta_0}\le c_3\left(\frac{\|z-w\|}{R}\right)^{\log \eta_0/ (2\log \rho)}.
$$
This proves the first desired assertion.

\medskip

Fix $n_0\ge 1$, $N_0\ge2$ and a bounded function $u$ on $G$ with $\|u\|_\infty=1$.  Set
$q(n,x)=\sum_z  u(z)p_{N_0-n}(z,x)=u*\mu^{(N_0-n)}(x)$. This function $q$ is  a
caloric
function on $[0,N_0-n_0]$ (for example see \cite[Lemma 4.5]{CK}),
and is bounded above by $\|u\|_\infty=1$.
Take $R>1$ such that  $ \theta^{\weight_*} R=n_0^{\weight_*}$.
 (Note that in particular $\theta R^{1/\weight_*}\ge 1$ so the first assertion will apply.)
 Let
$m_1,m_2\in [n_0,N_0]$ with $m_1>m_2$ and $x_1,x_2\in G$. Assume
first that
\begin{equation}\label{e4}|m_1-m_2|^{\weight_*}+\|x_1^{-1}x_2\|_{\mathfrak F_{G,2}}<
\theta^{\weight_*} R
=n_0^{\weight_*}
\end{equation}
and so $(N_0-m_2,x_2)\in Q(N_0-m_1,x_1,R)\subset [0,N_0-n_0]\times
G$. Applying the first assertion to this
caloric
function $q(n,x)$
with $(N_0-m_1,x_1)$, $(N_0-m_2,x_2)$ and $Q(N_0-m_1,x_1,R)$ in
place of $(m_1,x)$, $(m_2,y)$ and $Q(0,x_0,R)$ respectively, we have
$$|u*\mu^{(m_1)}(x^{-1}z)-u*\mu^{(m_2)}(y^{-1}z)|\le \frac{c}{n_0^{\beta \weight_*}}\left( |m_1-m_2|^{\weight_*}+\|x_1^{-1}x_2\|_{\mathfrak F_{G,2}}\right)^\beta.$$
This inequality is also trivially
true when \eqref{e4} does not hold. So the inequality above
holds for every $m_1,m_2\in [n_0,N_0]$ and $x_1,x_2\in G$ for all
$n_0\ge1$ and $N_0\ge 2$. This proves the second assertion after taking the supremum over all $u$ with $\|u\|_\infty=1$.
\end{proof}

\begin{rem} From \eqref{th-Holder} we can get that
$$|\mu^{(m_1)}(x)-\mu^{(m_2)}(y)|\le \frac{c}{n_0^{\beta \weight_*}\mathbf F( n_0) }
\left(|m_1-m_2|^{\weight_*}+\|x^{-1}y\|_{\mathfrak F_{G,2}}\right)^\beta$$
for any $x,y\in G$ and $m_1,m_2\ge n_0\ge1$. However, this assertion is weaker than that in Theorem \ref{thm0}(3).
\end{rem}

As an easy application of
Theorem \ref{th-Holder},
we have the following.
Recall that $G$ is a  finitely generated group of polynomial volume growth, equipped with a probability measure $\mu\in \mathcal P_{\preceq}(G,\mbox{reg})$.

\begin{cor} The pair
$(G,\mu)$ has the   $($weak$)$   Liouville property, namely,
all bounded $\mu$-harmonic functions on $G$ are constant.  Moreover there are constants $C,\beta>0$ such that, for any $x_0\in G,R>1$ and any bounded function
$u$ defined on $G$ and $\mu$-harmonic in $B(x_0,R)=\{z\in G: \|x_0^{-1}z\|_{\mathfrak F_{G,2}}\le R\}$, we have
\begin{equation}\label{harmest}
\forall\,x,y\in B(x_0,R/2),\;\; |u(x)-u(y)| \le C\left(\frac{ \|x^{-1}y\|_{\mathfrak F_{G,2}}}{R}\right)^\beta \|u\|_\infty.
\end{equation}
\end{cor}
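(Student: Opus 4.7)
The plan is to deduce both claims from Theorem \ref{th-Holder} by regarding a $\mu$-harmonic function as a time-independent caloric function. Given a bounded $u\colon G\to\mathbb R$ that is $\mu$-harmonic in $B(x_0,R)$, I set $q(k,x):=u(x)$ on $\mathbb Z_+\times G$. The identity $u(x)=u*\mu(x)$ for $x\in B(x_0,R)$ translates immediately into $q(k-1,x)=[q(k,\cdot)*\mu](x)$ for all $k\ge 1$ and all $x\in B(x_0,R)$, so $q$ is a bounded caloric function on $[0,T]\times B(x_0,R)$ for every $T$. Before invoking Theorem \ref{th-Holder}, I replace $u$ by $\tilde u:=u-\inf_G u$ to obtain a nonnegative caloric representative $\tilde q$ with $\|\tilde q\|_\infty\le 2\|u\|_\infty$; this preserves both caloricity (constants are $\mu$-harmonic) and the oscillation of $u$.

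The next step is to apply Theorem \ref{th-Holder} centered not at $x_0$ but at a moving point $x\in B(x_0,R/2)$. Since $\|\cdot\|_{\mathfrak F_{G,2}}$ is a norm (Theorem \ref{thm0}), the triangle inequality yields $B(x,R/2)\subset B(x_0,R)$, so $\tilde u$ is $\mu$-harmonic in $B(x,R/2)$ and hence $\tilde q$ is caloric in $Q(0,x,C_0 R_1)$ with $R_1:=R/(2C_0)$, where $C_0$ is the constant from \eqref{e:6.2}. Provided $R$ is large enough that $\theta R_1^{1/w_*}\ge 1$, Theorem \ref{th-Holder} applied to $\tilde q$ with $m_1=m_2=0$ gives
$$|u(x)-u(y)|=|\tilde q(0,x)-\tilde q(0,y)|\le C'\|u\|_\infty\left(\frac{\|x^{-1}y\|_{\mathfrak F_{G,2}}}{R_1}\right)^\beta$$
for every $y\in B(x,R_1)$, which is \eqref{harmest} up to the harmless constant factor $(2C_0)^\beta$. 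For $y\in B(x_0,R/2)$ with $\|x^{-1}y\|_{\mathfrak F_{G,2}}\ge R_1$, or for small $R$ such that $R_1<\theta^{-w_*}$, the trivial bound $|u(x)-u(y)|\le 2\|u\|_\infty$ already dominates the right-hand side of \eqref{harmest} after enlarging $C$, since in these cases $\|x^{-1}y\|_{\mathfrak F_{G,2}}/R$ is bounded below by a positive absolute constant.

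The weak Liouville property then follows at once from \eqref{harmest}: for any bounded $\mu$-harmonic function $u$ on $G$ and any pair $x,y\in G$, I apply \eqref{harmest} with $x_0=x$ and let $R\to\infty$, so that $|u(x)-u(y)|\le C\|u\|_\infty(\|x^{-1}y\|_{\mathfrak F_{G,2}}/R)^\beta\to 0$, whence $u(x)=u(y)$. The only substantive point in this plan is the mismatch between the radius $R/C_0$ natively supplied by Theorem \ref{th-Holder} and the radius $R/2$ appearing in the corollary; this is overcome by the center-shift combined with the trivial estimate on large $\|x^{-1}y\|_{\mathfrak F_{G,2}}$, and I do not anticipate a deeper obstacle.
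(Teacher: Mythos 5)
Your derivation is correct and is essentially the expected route, namely the one the paper implicitly intends when it calls the corollary ``an easy application'' of Theorem~\ref{th-Holder}: a bounded $\mu$-harmonic function, viewed as a time-independent function, is caloric (the backward relation $q(k-1,\cdot)=q(k,\cdot)*\mu$ reduces exactly to $u=u*\mu$), Theorem~\ref{th-Holder} with $m_1=m_2=0$ then yields the oscillation bound, and the radius mismatch $R/C_0$ versus $R/2$ is handled precisely as you did, by recentering at $x\in B(x_0,R/2)$ (the triangle inequality for the norm $\|\cdot\|_{\mathfrak F_{G,2}}$ gives $B(x,R/2)\subset B(x_0,R)$) and absorbing the remaining cases into the trivial bound using the fact that $\min\{\|g\|_{\mathfrak F_{G,2}}:g\ne e\}>0$.

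Two small points of hygiene that you already essentially flagged: the theorem's $\|q\|_\infty$ is $\sup_{[0,[\theta(C_0R)^{1/w_*}]]\times G}q$ without absolute value, so your normalization $\tilde u=u-\inf_G u$ (which preserves harmonicity, differences, and at most doubles the sup norm) is exactly the right fix; and in the small-$R$ or far-apart cases you should note explicitly that the case $x=y$ is trivial, while for $x\ne y$ the lower bound $\|x^{-1}y\|_{\mathfrak F_{G,2}}\ge c_0>0$ makes the trivial estimate $|u(x)-u(y)|\le 2\|u\|_\infty$ dominate the right-hand side of~\eqref{harmest} after enlarging $C$. The Liouville conclusion by sending $R\to\infty$ with $x_0=x$ is clean.
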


\begin{rem} For finitely generated nilpotent groups and any probability measure $\mu$, the weak Liouville property was proved in  \cite{DM}. In the case of symmetric probability measures on finitely generated nilpotent groups, the strong Liouville property (i.e., all non-negative $\mu$-harmonic functions are constant) follows from \cite{Marg}.
Because any finitely generated group of polynomial growth contains a nilpotent subgroup of finite index, these two results extend to groups of
polynomial growth  (given a symmetric measure on a group of polynomial volume growth $G$, one constructs a symmetric measure on a nilpotent
subgroup of finite index $N$ so that the restriction to $N$ of any harmonic function on $G$ is harmonic on $N$). In addition, for finitely generated group of polynomial volume growth and measures $\mu\in \mathcal P_{\preceq}(G,\mbox{reg})$
as treated here, the weak Liouville property follows also from a more general and direct argument given in \cite[Corollary 2.3]{EZ}.  The different methods
used in these papers do not provide estimates such as  (\ref{harmest}).
\end{rem}

\appendix

\section{Space-time regularity for $\mu^{(n)}$} \label{app-reg}
This section provides details concerning the intrinsic regularity afforded to the iterated convolutions of a symmetric measure, providing a straightforward extension and complement to \cite[Theorem 4.2]{Hebisch1993}.

\begin{lem} Fix ${\eps}  \in (0,1]$ and $\alpha>0$. If $\mu$ is symmetric and the spectrum of $f\mapsto f*\mu$ on $L^2(G)$ is contained in $[-1+{\eps} ,1]$,
  then   there exists a constant $C_{{\eps} ,\alpha}$ such that
$$ \forall\,f\in L^2(G),\;\forall\,n=1,2,\dots,\;\;
\|f*(\delta_e-\mu)^\alpha *\mu^{(n)}\|_2\le C_{{\eps} ,\alpha} n^{-\alpha}\|f\|_2.
$$
\end{lem}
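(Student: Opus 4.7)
The plan is to recognize this as a straightforward spectral calculus statement for the self-adjoint convolution operator $R_\mu : f \mapsto f*\mu$ acting on $L^2(G)$. Since $\mu$ is symmetric, a short computation shows $R_\mu$ is self-adjoint, and by hypothesis its spectrum $\Sigma(R_\mu)$ is contained in $[-1+\eps, 1]$. The convolution operator $f \mapsto f*(\delta_e-\mu)^\alpha * \mu^{(n)}$ equals $(I - R_\mu)^\alpha R_\mu^n$ applied to $f$, where the fractional power $(I-R_\mu)^\alpha$ is defined via the functional calculus since $I - R_\mu$ is a non-negative self-adjoint operator (its spectrum lies in $[0, 2-\eps]$).

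By the spectral theorem, the operator norm of $(I-R_\mu)^\alpha R_\mu^n$ on $L^2(G)$ equals
$$\sup_{\lambda \in \Sigma(R_\mu)} (1-\lambda)^\alpha |\lambda|^n \;\le\; \sup_{\lambda \in [-1+\eps, 1]} (1-\lambda)^\alpha |\lambda|^n.$$
So the estimate reduces to bounding this scalar supremum by $C_{\eps,\alpha} n^{-\alpha}$.

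For the range $\lambda \in [0, 1]$, a standard calculus exercise (differentiate and set to zero) shows that $(1-\lambda)^\alpha \lambda^n$ attains its maximum at $\lambda = n/(n+\alpha)$ with value
$$\left(\frac{\alpha}{n+\alpha}\right)^\alpha \left(\frac{n}{n+\alpha}\right)^n \;\le\; \left(\frac{\alpha}{n}\right)^\alpha \;=\; \alpha^\alpha n^{-\alpha}.$$
For the range $\lambda \in [-1+\eps, 0]$, we simply bound $(1-\lambda)^\alpha \le 2^\alpha$ and $|\lambda|^n \le (1-\eps)^n$; since $(1-\eps)^n$ decays exponentially, it is dominated by a constant (depending on $\eps, \alpha$) times $n^{-\alpha}$. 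Combining both ranges yields the claimed bound with $C_{\eps,\alpha} = \max\{\alpha^\alpha, \, 2^\alpha \sup_{n \ge 1} n^\alpha (1-\eps)^n\}$.

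The proof is essentially routine once the spectral framework is set up; the only point requiring care is verifying that $R_\mu$ is self-adjoint (which follows from symmetry of $\mu$ via $\langle f*\mu, g\rangle = \sum_{x,z} f(xz^{-1}) \mu(z) g(x) = \sum_{y,z} f(y) \mu(z^{-1}) g(yz) = \langle f, g*\mu\rangle$) and that $I - R_\mu$ is non-negative so the fractional power is well-defined. There is no real obstacle beyond these standard verifications.
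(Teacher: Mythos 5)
Your proposal is correct and takes essentially the same route as the paper: both reduce via spectral theory to bounding $\sup_{\lambda\in[-1+\eps,1]}|1-\lambda|^\alpha|\lambda|^n$ and then split the analysis into the interior critical point $\lambda=n/(n+\alpha)$ and the left endpoint, obtaining the $n^{-\alpha}$ decay from the former and exponential decay from the latter. The only (cosmetic) difference is that you make the constant explicit, while the paper leaves it as $C_{\eps,\alpha}$.
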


\begin{proof} This is a simple consequence of spectral theory and  calculus.  Indeed, (using the spectral resolution $E_\lambda$ of the
(self-adjoint) operator of convolution by $\mu$), spectral theory shows that
$\|f*(\delta_e-\mu)^\alpha *\mu^{(n)}\|_2\le M \|f\|_2$,  where
$$M=\sup_{\lambda\in J_{\eps} }\{  |1-\lambda|^\alpha |\lambda|^n\},\;\;J_{\eps} =[-1+{\eps} ,1].$$
The  local   maxima of the function $\lambda\mapsto |1-\lambda|^\alpha|\lambda|^n$ on $J_{\eps} $ are at $ -1+{\eps} $
 or   $n/(\alpha+n)$
so that
 $$M\le \max\left\{ (1-{\eps} )^n(2-{\eps} )^\alpha, \left(\frac{n}{\alpha+n}\right)^n \left(\frac{\alpha}{\alpha+n}\right)^\alpha\right\} \le
C_{{\eps} ,\alpha}  n^{-\alpha} .$$
\end{proof}
\begin{lem} Fix ${\eps}  \in (0,1]$. If $\mu$ is symmetric and the spectrum of $f\mapsto f*\mu$ on $L^2(G)$ is contained in $[-1+{\eps} ,1]$, then  there exists a constant $C_{{\eps} }$ such that, for all pairs of positive integers $u,v$ such that $n\ge u+2v$, we have
$$\|\mu^{(n+m)}-\mu^{(n)}\|_\infty \le  C_{\eps}  \frac{m}{u}  \mu^{(2v)}(e).$$
\end{lem}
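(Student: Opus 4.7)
The plan is to reduce the statement to the preceding lemma (with $\alpha=1$) by telescoping the difference $\mu^{(n+m)}-\mu^{(n)}$ into $m$ one-step differences, and then estimating each one-step difference pointwise using Cauchy--Schwarz in the form $|(F*G)(x)|\le \|F\|_2\|G\|_2$.

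First I would use the algebraic identity $\delta_e-\mu^{(m)}=(\delta_e-\mu)*\sum_{k=0}^{m-1}\mu^{(k)}$, which yields
\[
\mu^{(n)}-\mu^{(n+m)} \;=\; \mu^{(n)}*(\delta_e-\mu^{(m)}) \;=\; \sum_{k=0}^{m-1}(\delta_e-\mu)*\mu^{(n+k)}.
\]
Since powers of the (symmetric) measure $\mu$ all commute with each other and with $\delta_e-\mu$, it will suffice to show that, uniformly in $k\ge 0$,
\[
\bigl|\,(\delta_e-\mu)*\mu^{(n+k)}(x)\,\bigr| \;\le\; \frac{C_\epsilon}{u}\,\mu^{(2v)}(e)
\quad\text{for every } x\in G,
\]
and then sum over $k=0,\dots,m-1$.

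To get both a factor $1/u$ \emph{and} a factor $\mu^{(2v)}(e)$ (not its square root), the key is to split off \emph{two} copies of $\mu^{(v)}$: since $n+k\ge n\ge u+2v$, I would write
\[
(\delta_e-\mu)*\mu^{(n+k)} \;=\; \mu^{(v)} \,*\, \bigl[\mu^{(v)}*(\delta_e-\mu)*\mu^{(n+k-2v)}\bigr],
\]
and then apply Cauchy--Schwarz:
\[
\bigl|(\delta_e-\mu)*\mu^{(n+k)}(x)\bigr|
\;\le\; \|\mu^{(v)}\|_2 \cdot \bigl\|\mu^{(v)}*(\delta_e-\mu)*\mu^{(n+k-2v)}\bigr\|_2.
\]
For the first factor, the symmetry of $\mu$ (and hence of $\mu^{(v)}$) gives $\|\mu^{(v)}\|_2^2 = \sum_x \mu^{(v)}(x)\mu^{(v)}(x^{-1}) = \mu^{(2v)}(e)$. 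For the second factor, I apply the preceding lemma with $f=\mu^{(v)}$, $\alpha=1$, and the lemma's ``$n$'' equal to $n+k-2v$, which is $\ge u$ by hypothesis; this produces
\[
\bigl\|\mu^{(v)}*(\delta_e-\mu)*\mu^{(n+k-2v)}\bigr\|_2
\;\le\; \frac{C_{\epsilon,1}}{n+k-2v}\,\|\mu^{(v)}\|_2
\;\le\; \frac{C_\epsilon}{u}\,\mu^{(2v)}(e)^{1/2}.
\]
Multiplying the two bounds gives the desired pointwise estimate, and summing the telescoping sum over $k=0,\dots,m-1$ yields $\|\mu^{(n+m)}-\mu^{(n)}\|_\infty \le C_\epsilon\,(m/u)\,\mu^{(2v)}(e)$.

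The only real subtlety here is the correct bookkeeping of powers: one has to place \emph{one} factor $\mu^{(v)}$ outside the Cauchy--Schwarz pairing and \emph{another} factor $\mu^{(v)}$ inside the norm bounded by the lemma, so that each contributes $\mu^{(2v)}(e)^{1/2}$ and together they give the full factor $\mu^{(2v)}(e)$, while simultaneously leaving a long power $\mu^{(n+k-2v)}$ with exponent $\ge u$ for the lemma to produce the gain $1/u$. The condition $n\ge u+2v$ is precisely what makes this split legal. After that, the remaining steps are routine manipulations of convolution and spectral calculus already recorded in the previous lemma.
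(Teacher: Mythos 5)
Your proof is correct and uses essentially the same strategy as the paper: telescope into one-step differences, split off two factors of $\mu^{(v)}$ so that one controls the $L^\infty\to L^2$ step via Young's inequality and the other feeds into the preceding lemma with $\alpha=1$, then use $n+k-2v\ge u$ to produce the $1/u$ factor. The only (cosmetic) difference is that the paper applies the monotonicity of $k\mapsto\|\mu^{(k)}\|_2$ after the $L^2$ step, whereas you absorb the full exponent $n+k-2v$ into the lemma's denominator; the bookkeeping is equivalent.
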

\begin{proof} It suffices to prove this with $m=1$. To that end, observe that
\begin{align*}\|\mu^{(n+1)}-\mu^{(n)}\|_\infty&\le \|\mu^{(u+v+1)}-\mu^{(u+v)}\|_2\|\mu^{(v)}\|_2\\
& = \|\mu^{(v)}*(\mu^{(u+1)}-\mu^{(u)})\|_2 \|\mu^{(v)}\|_2\\
&\le  C_{{\eps} ,1} u^{-1} \|\mu^{(v)}\|_2^2.
\end{align*}
The result follows because $\|\mu^{(v)}\|_2^2=\mu^{(2v)}(e)$.
\end{proof}
\begin{pro} \label{pro-93}
Fix ${\eps}  \in (0,1]$. Let $G$ be a discrete group equipped with a symmetric probability
measure $\mu$, a quasi-norm $\|\cdot\|$ and a continuous
increasing function $r : (0,\infty)\rightarrow (0,\infty)$ with inverse $\rho$. Assume  $\mu$ satisfies a pointwise $(\|\cdot\|,r)$-pseudo-Poincar\'e inequality  with constant $C$ and that the spectrum of $f\mapsto f*\mu$ on $L^2(G)$ is contained in $[-1+{\eps} ,1]$.  Then there exists a constant $C_{\eps} $ such that for all positive integers $n,m,u,v$ such that $n=u+2v$ and all $x,y\in G$, we have
 $$ |\mu^{(n+m)}(xy)-\mu^{(n)}(x)|\le C_{\eps} \left(\frac{m}{u}+ \sqrt{\frac{C\rho(\|y\|)}{u}} \right) \mu^{(2v)}(e).$$
\end{pro}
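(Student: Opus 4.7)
\textbf{Proof plan for Proposition \ref{pro-93}.}  The plan is to split the difference into a pure time part and a pure space part:
\[
 |\mu^{(n+m)}(xy)-\mu^{(n)}(x)| \le \underbrace{|\mu^{(n+m)}(xy)-\mu^{(n)}(xy)|}_{\text{time}} + \underbrace{|\mu^{(n)}(xy)-\mu^{(n)}(x)|}_{\text{space}},
\]
handle the time piece by a direct application of the preceding lemma, and handle the space piece by combining Cauchy--Schwarz, the pointwise pseudo-Poincar\'e inequality, and the first lemma of the appendix to control a Dirichlet energy of $\mu^{(u+v)}$.

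For the time piece, the second lemma of this appendix gives (with the hypothesis $n\ge u+2v$)
\[
\|\mu^{(n+m)}-\mu^{(n)}\|_\infty \le C_\eps\,\frac{m}{u}\,\mu^{(2v)}(e),
\]
which contributes the first term on the right-hand side. For the space piece, I would write $\mu^{(n)}=\mu^{(v)}*\mu^{(u+v)}$ and apply Cauchy--Schwarz together with the translation invariance of the counting measure on $G$:
\[
|\mu^{(n)}(xy)-\mu^{(n)}(x)|^2 \le \|\mu^{(v)}\|_2^{\,2}\sum_{w\in G}|\mu^{(u+v)}(wy)-\mu^{(u+v)}(w)|^2 = \mu^{(2v)}(e)\,S_y,
\]
where $S_y:=\sum_{w}|\mu^{(u+v)}(wy)-\mu^{(u+v)}(w)|^2$. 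The pointwise $(\|\cdot\|,r)$-pseudo-Poincar\'e inequality applied to $f=\mu^{(u+v)}$ bounds $S_y\le C\rho(\|y\|)\,\mathcal E_\mu(\mu^{(u+v)},\mu^{(u+v)})$.

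The remaining step, which is the technical core, is to control $\mathcal E_\mu(\mu^{(u+v)},\mu^{(u+v)})=\|L^{1/2}\mu^{(u+v)}\|_2^{\,2}$, where $L=\delta_e-\mu$ acts by convolution. Since the operator $P_\mu f=f*\mu$ is self-adjoint on $L^2(G)$ with spectrum in $[-1+\eps,1]$, $P_\mu$ and $L$ commute, and $\mu^{(u+v)}=P_\mu^{\,u}\mu^{(v)}$, so
\[
\|L^{1/2}\mu^{(u+v)}\|_2 = \|L^{1/2}P_\mu^{\,u}\mu^{(v)}\|_2.
\]
The first lemma of the appendix, applied with $\alpha=1/2$, $f=\mu^{(v)}$ and $n=u$, yields
\[
\|L^{1/2}P_\mu^{\,u}\mu^{(v)}\|_2 \le C_{\eps,1/2}\,u^{-1/2}\|\mu^{(v)}\|_2 = C_{\eps,1/2}\,u^{-1/2}\sqrt{\mu^{(2v)}(e)}.
\]
Squaring and substituting into the Cauchy--Schwarz bound gives the space contribution $C_\eps\sqrt{C\rho(\|y\|)/u}\,\mu^{(2v)}(e)$, which combined with the time estimate completes the proof.

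The potential stumbling block, and the step most worth checking carefully, is the identity $L^{1/2}P_\mu^{\,u}\mu^{(v)}=\mu^{(v)}*(\delta_e-\mu)^{1/2}*\mu^{(u)}$ as written in the first lemma: one must justify via spectral calculus that $L^{1/2}$ commutes with $P_\mu^{\,u}$ and that applying $L^{1/2}$ to the $\ell^2$-function $\mu^{(v)}$ is legitimate --- this is straightforward because $\mu^{(v)}\in \ell^2(G)$ and $L^{1/2}$ is a bounded self-adjoint operator. Everything else is bookkeeping.
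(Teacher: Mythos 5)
Your proof is correct and follows essentially the same route as the paper's: reduce to $m=0$ by applying the preceding lemma to the time difference, then for the space difference use Cauchy--Schwarz with the factorization $\mu^{(n)}=\mu^{(v)}*\mu^{(u+v)}$, invoke the pointwise pseudo-Poincar\'e inequality on $\mu^{(u+v)}$, and control $\mathcal E_\mu(\mu^{(u+v)},\mu^{(u+v)})$ via the first lemma with $\alpha=1/2$. The only presentational difference is that you spell out the operator-theoretic identification $\mathcal E_\mu(f,f)=\|L^{1/2}f\|_2^2$ and the commutativity of $L^{1/2}$ with $P_\mu^u$, which the paper compresses into a single chain of inequalities.
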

\begin{proof} It suffices to prove the case $m=0$ (for $m>0$, use the previous lemma to reduce to the case $m=0$). By the Cauchy-Schwarz inequality and the assumed Poincar\'e inequality,
\begin{align*}
|\mu^{(n)}(xy)-\mu^{(n)}(x)| &\le  \|\mu^{(u+v)}(\cdot y)-\mu^{(u+v)}(\cdot)\|_2\|\mu^{(v)}\|_2\\
&\le [C \rho(\|y\|) \mathcal E_\mu(\mu^{(u+v)},\mu^{(u+v)})]^{1/2} \|\mu^{(v)}\|_2\\
 &\le  [C \rho(\|y\|)]^{1/2} \|   \mu^{(v)} *(\delta_e-\mu)^{1/2}*\mu^{(u)}\|_2] \|\mu^{(v)}\|_2\\
 &\le  C_{{\eps} ,1/2}\sqrt{ \frac{C\rho(\|y\|)}{u}} \|\mu^{(v)}\|_2^2\\
 &= C_{{\eps} ,1/2}\sqrt{ \frac{C\rho(\|y\|)}{u}} \mu^{(2v)}(e).
  \end{align*}
\end{proof}
\ \

\noindent \textbf{Acknowledgements.}
We are grateful to the referee for helpful comments. 
The research of ZC is supported by Simons Foundation Grant 520542,
a Victor Klee Faculty Fellowship at UW, and NNSFC grant 11731009,
TK by the Grant-in-Aid for Scientific Research (A) 17H01093,
Japan, LSC by NSF grants DMS-1404435 and DMS-1707589, and JW by NNSFC grant 11831014, the Program for Probability and Statistics: Theory and Application (No.\ IRTL1704), and the Program for Innovative Research Team in Science and Technology in Fujian Province University (IRTSTFJ).

\bigskip
\vskip 0.2truein

\footnotesize{
 {\bf Zhen-Qing Chen}

Department of Mathematics, University of Washington, Seattle,
WA 98195, USA.

E-mail: {\tt zqchen@uw.edu}

\bigskip

{\bf Takashi Kumagai}

Research Institute for Mathematical Sciences,
Kyoto University, Kyoto 606-8502, Japan.

E-mail: {\tt kumagai@kurims.kyoto-u.ac.jp}

\bigskip

{\bf Laurent Saloff-Coste}

Department of Mathematics, Cornell University, Ithaca, NY 14853, USA.

E-mail: {\tt lsc@uno.math.cornell.edu}

\bigskip

{\bf Jian Wang}

College of Mathematics and Informatics, \\
\indent Fujian Key Laboratory of Mathematical Analysis and Applications (FJKLMAA),\\  \indent Center for Applied Mathematics of Fujian Province (FJNU),\\
\indent Fujian Normal University, Fuzhou 350007,
P.R. China. E-mail: {\tt jianwang@fjnu.edu.cn}

\bigskip

{\bf Tianyi Zheng}

Department of Mathematics, UC San Diego, San Diego, CA 92093-0112, USA.

E-mail: {\tt tzheng2@math.ucsd.edu}
}
\end{document}